\documentclass[11pt,a4paper]{article}

\usepackage[margin=1in]{geometry}
\usepackage[T1]{fontenc}
\usepackage[utf8]{inputenc}
\usepackage{lmodern}
\usepackage{microtype}
\usepackage{xcolor}
\definecolor{revisionpurple}{RGB}{128,0,128}
\usepackage{amsmath,amssymb,amsfonts,amsthm,mathtools,bm}
\usepackage{enumitem}
\usepackage{booktabs}
\usepackage[numbers,sort&compress]{natbib}
\usepackage[colorlinks=true,linkcolor=blue,citecolor=blue,urlcolor=blue]{hyperref}
\usepackage[nameinlink,capitalize,noabbrev]{cleveref}
\newcommand{\doi}[1]{\href{https://doi.org/#1}{\texttt{#1}}}
\newcommand{\arxivdoi}[1]{\href{https://doi.org/10.48550/arXiv.#1}{\texttt{10.48550/arXiv.#1}}}

\setlist[itemize]{leftmargin=2em}
\setlist[enumerate]{leftmargin=2em}

\newtheorem{theorem}{Theorem}[section]
\newtheorem{proposition}[theorem]{Proposition}
\newtheorem{lemma}[theorem]{Lemma}
\newtheorem{corollary}[theorem]{Corollary}
\newtheorem{assumption}[theorem]{Assumption}
\newtheorem{definition}[theorem]{Definition}
\newtheorem{remark}[theorem]{Remark}
\newtheorem{example}[theorem]{Example}

\newcommand{\R}{\mathbb{R}}
\newcommand{\N}{\mathbb{N}}
\newcommand{\E}{\mathbb{E}}
\newcommand{\Prob}{\mathbb{P}}
\newcommand{\calP}{\mathcal{P}}
\newcommand{\calL}{\mathcal{L}}

\newcommand{\calA}{\mathcal{A}}
\newcommand{\calU}{\mathcal{U}}
\newcommand{\dd}{\,\mathrm{d}}

\newcommand{\softmax}{\mathrm{softmax}}
\newcommand{\rowsoftmax}{\mathrm{softmax}_{\mathrm{row}}}
\newcommand{\inner}[2]{\left\langle #1,#2\right\rangle}
\newcommand{\norm}[1]{\left\lVert #1\right\rVert}
\newcommand{\abs}[1]{\left\lvert #1\right\rvert}

\newcommand{\mcJ}{\mathcal{J}}

\newcommand{\eps}{\varepsilon}
\newcommand{\thetaeps}{\theta^{\eps}}

\newcommand{\DeltaV}{\Delta_{d_v}}

\title{\bfseries A First-Order Mean Field Control Analysis of Transformer Layers under Cross-Entropy Training}

\makeatletter

\def\author#1#2{\expandafter\gdef\csname transformer@author#1\endcsname{#2}\transformer@setauthors}
\def\address#1#2{\expandafter\gdef\csname transformer@address#1\endcsname{#2}\transformer@setauthors}
\def\email#1#2{\expandafter\gdef\csname transformer@email#1\endcsname{#2}\transformer@setauthors}
\newcommand{\transformer@authorentry}[1]{%
  \csname transformer@author#1\endcsname\thanks{%
  \csname transformer@address#1\endcsname. Email: \href{mailto:\csname transformer@email#1\endcsname}{\texttt{\csname transformer@email#1\endcsname}}}}
\newcommand{\transformer@setauthors}{%
  \gdef\@author{%
    \@ifundefined{transformer@author1}{}{\transformer@authorentry{1}}%
    \@ifundefined{transformer@author2}{}{\and\transformer@authorentry{2}}%
  }%
}
\makeatother
\author1{Cheng Huan}
\address1{Department of Statistics and Data Science, The Chinese University of Hong Kong}
\email1{chenghuan@cuhk.edu.hk}
\author2{Hongwei Yuan}
\address2{Department of Mathematics, University of Macau}
\email2{hwyuan@um.edu.mo}
\date{}

\begin{document}
\maketitle

\begin{abstract}
We study Transformer-type residual layers under cross-entropy training through a continuous-depth mean field control viewpoint.  Depth is treated as time, layer parameters as controls, and the residual Transformer recursion as an explicit Euler scheme for a controlled hidden-state flow.  For fixed controls, we prove an $O(\varepsilon)$ pathwise approximation of finite-depth trajectories by the continuous flow and combine this with high-probability sampling bounds for the empirical cross-entropy risk.  We formulate the limiting population problem as a first-order transport control problem for the law of hidden states and derive a Pontryagin condition whose terminal adjoint contains the softmax residual.  We also give finite-class and metric-entropy uniform estimates, compare optimal values, and discuss existence, stability, continuous-to-discrete recovery, initialization, and range estimates for continuous minimizers.
\end{abstract}

\noindent\textbf{Keywords:} Transformer; self-attention; cross-entropy loss; first-order mean field control; neural ordinary differential equations; Euler approximation; empirical risk.

\noindent\textbf{2020 Mathematics Subject Classification:} Primary 68T07; Secondary 49N80, 49N90, 65L20.

\section{Introduction}
The Transformer architecture of \citet{Vaswani2017} has become the dominant model class for sequence modeling, language modeling, and many representation-learning tasks.  Its success has also motivated a growing mathematical literature on attention mechanisms, normalization, residual information flow, and interacting-particle interpretations of token dynamics; see, for example, \citet{GeshkovskiLetrouitPolyanskiyRigollet2025}.  A Transformer layer updates a sequence of token representations through a residual operation involving self-attention, feed-forward transformations, and normalization.  In many mathematical discussions it is useful to isolate this residual character and write a layer in the form
\begin{align}
\label{eq:intro-transformer-layer}
    X_{k+1}=X_k+h_k F(X_k;\theta_k),
\end{align}
where $X_k\in\R^{L\times d_x}$ is the sequence of $L$ token embeddings at depth $k$, $\theta_k$ is the parameter of the $k$-th Transformer block, and $F:\R^{L\times d_x}\times\Theta\to\R^{L\times d_x}$ is a self-attention-type vector field.  This residual form suggests a continuous-depth interpretation: when the layer step size is small and the number of layers is large, the hidden state should be close to the solution of a controlled ordinary differential equation.  This viewpoint is closely related to the dynamical-systems interpretation of residual networks \citep{E2017,HaberRuthotto2017,LiShi2017}, to neural ordinary differential equations \citep{ChenRubanovaBettencourtDuvenaud2018}, and to PDE-inspired deep network models \citep{RuthottoHaber2020}.

The main motivation of this work is to turn the continuous-depth intuition for residual Transformers into a quantitative approximation and control framework for population training.  A finite Transformer is trained simultaneously on many input sequences, so its layer parameters act as a common control shared by all samples rather than as controls tailored to a single trajectory.  Tracking the induced law of hidden states across depth therefore leads naturally to a deterministic first-order mean field control problem in which the state variable is the population distribution of representations and the terminal cost is the cross-entropy risk.  This point of view is consistent with mean field type control theory \citep{CarmonaDelarue2018,BensoussanWongYamYuan2026}, but it is adapted here to residual self-attention dynamics and to token-level classification losses.

The control formulation is useful for three reasons.  First, it separates the depth-discretization error, the statistical sampling error, and the optimization error over an admissible control class, which are often intertwined in finite-depth training.  Second, it gives a precise variational structure: the limiting problem admits a transport equation and a Pontryagin condition whose terminal adjoint is the softmax residual generated by cross-entropy.  Third, it clarifies which later stability and convergence statements are conditional on compactness, local coercivity, or PL-type assumptions, rather than consequences of the Transformer architecture alone.

We consider supervised sequence-label data in which the input sequence $X_0$ has law $\mu_X$ on $\R^{L\times d_x}$ and the labels are deterministic next-token labels generated from $X_0$.  More precisely, $Y_l(X_0)\in\{e_1,\ldots,e_{d_v}\}\subset\R^{d_v}$ is the one-hot next-token label at position $l\in\{0,\ldots,L-1\}$, completely determined by $X_0$.  We write $Y(X_0)=\bigl(Y_0(X_0),\ldots,Y_{L-1}(X_0)\bigr)^\top$.  The finite-depth Transformer recursion is modeled as
\begin{equation}
\label{eq:intro-discrete}
    X_{k+1}^{\eps}=X_k^{\eps}+\eps\zeta(k\eps)F(X_k^{\eps};\theta_k),
    \qquad k=0,\ldots,M-1,
\end{equation}
where $M\eps=T$ is the final depth, $\zeta$ is a bounded depth profile, and $\theta_k$ is the trainable layer parameter.  The final logits are obtained from a readout map $H:\R^{L\times d_x}\to\R^{L\times d_v}$, and the training loss is the token-averaged cross-entropy
\begin{align}
\label{eq:intro-ce}
    \ell(Y(X_0),H(X))=-\frac1L\sum_{l=0}^{L-1}\sum_{j=1}^{d_v}Y_{l,j}(X_0)\log \softmax_j(H(X)_{l,:}).
\end{align}
When $\eps\downarrow0$, the recursion \eqref{eq:intro-discrete} is an explicit Euler discretization of the controlled Transformer flow
\begin{equation}
\label{eq:intro-ode}
    \dot X_t=\zeta(t)F(X_t;\theta_t),
    \qquad X_0\sim \mu_X.
\end{equation}
The special case $\zeta\equiv 1$ gives the simpler equation $\dot X_t=F(X_t;\theta_t)$.  We keep $\zeta$ explicitly since it is useful for tracking nonuniform depth weights and for comparison with layerwise learning-rate scalings.  It may also be interpreted as a scalar continuous-depth analogue of a residual-stream strength or gate.  This interpretation is related in spirit to hyper-connections and manifold-constrained hyper-connections, which modify residual information flow by expanding, mixing, or constraining connection paths across depth \citep{XieWeiCaoZhaoDengLiDaiGaoChangYuZhaoZhouXuZhangZengHuWangYuanWangLiang2025}.  The present $\zeta(t)$, however, is only a scalar modulation of a single residual vector field; it does not model the full hyper-connection mechanism, where several residual streams are mixed by structured connection operators.

The population continuous-depth objective is
\begin{equation}
\label{eq:intro-pop-risk}
    \mcJ(\theta)=\E_{X_0\sim\mu_X}\bigl[\ell(Y(X_0),H(X_T^{\theta}(X_0)))\bigr].
\end{equation}
Equivalently, if $\rho_t^\theta=\calL(X_t^\theta)$ denotes only the law of the hidden state $X_t^\theta$, then $\rho_t^\theta$ solves the first-order transport equation
\begin{equation}
\label{eq:intro-transport}
\partial_t\rho_t^\theta+\nabla_X\cdot\bigl(\zeta(t)F(X;\theta_t)\rho_t^\theta\bigr)=0,
    \qquad \rho_0^\theta=\mu_X.
\end{equation}
The label variable is not transported as part of the state.  Since $Y(X_0)$ is fixed by the initial input, the objective is evaluated along characteristics as in \eqref{eq:intro-pop-risk}.  Thus training the continuous-depth Transformer is a deterministic first-order mean field control problem: the control is the layer-parameter curve $t\mapsto\theta_t$, the state is the population law of the hidden representation $X_t^\theta$, and the terminal cost is the cross-entropy risk.  This is complementary to our previous large-head mean-field analysis of attention \citep{HuanYuan2026MeanFieldAttention}: there the empirical law is the law of attention heads in parameter space, whereas here the law is the data-induced population distribution of hidden states along depth. For the mean field viewpoint of interpreting neurons or units as particles in parameter space whose empirical distribution converges to a deterministic measure-valued evolution under suitable scaling, one can refer to \citep{MeiMontanariNguyen2018,SirignanoSpiliopoulos2020,RotskoffVandenEijnden2022}.

We quantify two approximation errors between the original finite training problem and the limiting mean field control problem.  The first is a depth-discretization error: the finite Transformer recursion \eqref{eq:intro-discrete} approximates \eqref{eq:intro-ode}.  The second is a sampling error: the population expectation in \eqref{eq:intro-pop-risk} is replaced by an empirical average over $N$ training samples.  Under boundedness and Lipschitz assumptions, and using standard concentration ideas for bounded independent samples \citep{BoucheronLugosiMassart2013}, our main estimates show that, for each fixed control,
\begin{equation}
\label{eq:intro-rate}
    \abs{\mcJ_{N,\eps}(\thetaeps)-\mcJ(\thetaeps)}
    \leq C\eps+C\sqrt{\frac{\log(1/\delta)}{N}}
\end{equation}
with probability at least $1-\delta$.  We also provide finite-class and metric-entropy uniform variants, which are natural estimates for comparing approximate optimal values.  These uniform statistical estimates are over fixed admissible classes chosen independently of the sample.  Beyond approximation, we discuss how compactness can arise from layer normalization, how dropout-type mechanisms and quadratic regularization relate to stability \citep{HintonEtAl2012,WagerWangLiang2013,WanEtAl2013}, and why nonconvex neural-network landscapes require local rather than global convexity assumptions \citep{GoodfellowVinyalsSaxe2015,ChoromanskaHenaffMathieuArousLeCun2015}.  The local SGD statement uses standard smooth-descent and PL-type reasoning from first-order optimization \citep{Nesterov2018}.

The main contributions are the following.
\begin{enumerate}
    \item We introduce a continuous-depth first-order mean field control formulation for Transformer cross-entropy training.  The transported law is the data-induced law of hidden states along depth, while the labels remain attached to the initial samples and enter only through the terminal Lagrangian loss.
    \item We prove a deterministic Euler approximation theorem for residual Transformer layers: for a fixed layer sequence and a fixed depth horizon, the finite-depth hidden trajectory is within $O(\eps)$ of the corresponding controlled continuous flow.
    \item We combine this discretization estimate with concentration for bounded cross-entropy losses.  For each fixed control, the empirical finite-depth risk differs from the population continuous-depth risk by an $O(\eps)$ deterministic term plus a high-probability $O(N^{-1/2})$ sampling term.
    \item We give finite-class and metric-entropy uniform estimates and use them to compare empirical finite-depth optimal values with continuous population optimal values.  These are fixed-class estimates: the admissible class is chosen independently of the sample, and the results are not presented as a full data-adaptive generalization theory for SGD-trained Transformers.
    \item We derive a Pontryagin first-order condition for the limiting population control problem.  The averaged Hamiltonian depends on the joint law of the state and adjoint, and the terminal adjoint contains the cross-entropy softmax residual $\softmax(H(X_T))-Y(X_0)$.
    \item We develop a conditional stability and design toolbox for the continuous-depth problem, including existence under compactness, local and global quadratic-growth implications, finite-depth stability of minimizers, continuous-to-discrete recovery, local SGD initialization under a PL condition, and a priori range estimates from compact input support and regularization.
\end{enumerate}

The rest of the paper is organized as follows.  \Cref{sec:transformer-formulation} fixes the data model, the token-level cross-entropy loss, and the simplified self-attention vector field used as a representative residual block.  \Cref{sec:assumptions} states the compactness, smoothness, and readout hypotheses and proves the basic Lipschitz estimate for the cross-entropy loss.  \Cref{sec:finite-depth-approximation} proves the Euler trajectory estimate, the empirical-to-population bounds, and the finite-class and metric-entropy uniform estimates.  \Cref{sec:continuous-depth-control} studies the limiting continuous-depth mean field control problem, including its transport formulation and Pontryagin condition.  \Cref{sec:conditional-stability-results} then collects conditional stability results, including existence under compactness, local and global quadratic-growth implications, continuous-to-discrete recovery, and the precise assumptions under which these tools apply.  \Cref{sec:initialization-range} then discusses data-dependent initialization and a priori range estimates from compact input support. Finally, \Cref{sec:discussion-conclusion} summarizes the scope of the theory, its limitations, and several directions for extension.

\section{Transformer formulation}
\label{sec:transformer-formulation}

\subsection{Data, states, and logits}

Fix a sequence length $L$, a hidden dimension $d_x$, and a vocabulary size $d_v$.  A hidden state is a matrix
\begin{align*}
    X=(x_0,\ldots,x_{L-1})^\top\in\R^{L\times d_x},
\end{align*}
where $x_l$ is the representation of the token at position $l\in\{0,\ldots,L-1\}$.  The vocabulary simplex is
\begin{align*}
    \DeltaV=\left\{p=(p_1,\ldots,p_{d_v})\in[0,1]^{d_v}:\sum_{j=1}^{d_v}p_j=1\right\}.
\end{align*}
Let $e_1,\ldots,e_{d_v}$ denote the canonical basis of $\R^{d_v}$.  The primitive data law is a probability measure $\mu_X\in\calP(\R^{L\times d_x})$ for the input sequence $X_0$.  For each $l\in\{0,\ldots,L-1\}$, the next-token label is a measurable map
\begin{align*}
    Y_l:\R^{L\times d_x}\to\{e_1,\ldots,e_{d_v}\}\subset\DeltaV\subset\R^{d_v}.
\end{align*}
Thus $Y_l(X_0)\in\R^{d_v}$ is the one-hot next-token label at position $l\in\{0,\ldots,L-1\}$, completely determined by $X_0$.  We write
\begin{align*}
    Y(X_0)=\bigl(Y_0(X_0),\ldots,Y_{L-1}(X_0)\bigr)^\top.
\end{align*}
 Sampling means drawing $X_0\sim\mu_X$ and then evaluating the deterministic labels $Y_l(X_0)$ in the terminal cross-entropy loss.  We assume throughout that the initial hidden states lie in a compact set or have been truncated to one; this assumption is made precise below.

The readout map $H:\R^{L\times d_x}\to\R^{L\times d_v}$ assigns token-level vocabulary logits.  In the simplest case,
\begin{align*}
    H(X)=XW_{\mathrm{out}},
\end{align*}
where $W_{\mathrm{out}}\in\R^{d_x\times d_v}$ is a fixed or separately trained output matrix.  The token-averaged cross-entropy loss is
\begin{equation}
\label{eq:ce-loss}
    \ell(Y,Z)
    =-\frac1L\sum_{l=0}^{L-1}\sum_{j=1}^{d_v}Y_{l,j}\log\softmax_j(Z_{l,:})
    =\frac1L\sum_{l=0}^{L-1}\left[\log\sum_{j=1}^{d_v}e^{Z_{l,j}}-Y_l^\top Z_{l,:}\right].
\end{equation}
Its gradient in $Z$ is
\begin{equation}
\label{eq:ce-gradient}
    \nabla_Z\ell(Y,Z)_{l,:}=\frac1L\bigl(\softmax(Z_{l,:})-Y_l\bigr),\qquad l=0,\ldots,L-1.
\end{equation}
The softmax residual in \eqref{eq:ce-gradient} is the main difference between the present cross-entropy setting and square-loss continuous-depth models.

\subsection{A Transformer vector field}

The analysis below applies to any vector field $F(X;\theta)$ satisfying the smoothness assumptions in \Cref{sec:assumptions}.  We now describe a typical masked self-attention example. The dimensions in the parameter tuple should be chosen so that every matrix product maps $\R^{L\times d_x}$ back to $\R^{L\times d_x}$.  For a single attention head with attention width $d_k$ and feed-forward width $d_{\rm ff}$, we take
\begin{align*}
    \theta&=(W_Q,W_K,W_V,W_O,W_1,b_1,W_2,b_2),\\
    W_Q,W_K,W_V&\in\R^{d_x\times d_k},\qquad
    W_O\in\R^{d_k\times d_x},\\
    W_1&\in\R^{d_x\times d_{\rm ff}},\qquad b_1\in\R^{d_{\rm ff}},\qquad
    W_2\in\R^{d_{\rm ff}\times d_x},\qquad b_2\in\R^{d_x}.
\end{align*}
The biases $b_1$ and $b_2$ are broadcast across the $L$ token positions. Define
\begin{align*}
    Q=XW_Q\in\R^{L\times d_k},
    \qquad K=XW_K\in\R^{L\times d_k},
    \qquad V=XW_V\in\R^{L\times d_k}.
\end{align*}
Let $\mathcal M\in(\{0,-\infty\})^{L\times L}$ be the causal mask, with $\mathcal M_{rs}=0$ for $0\le s\le r\le L-1$ and $\mathcal M_{rs}=-\infty$ for $0\le r<s\le L-1$.  The row-wise masked attention matrix is
\begin{equation}
\label{eq:attention-matrix}
    A_\theta(X)=\rowsoftmax\left(\frac{QK^\top}{\sqrt{d_k}}+\mathcal M\right).
\end{equation}
For the analysis, the notation with $-\infty$ is interpreted on the finite allowed coordinates.  Let $I_r=\{0,\ldots,r\}$.  Equivalently,
\begin{equation}
\label{eq:finite-row-masked-softmax}
    (A_\theta(X))_{rs}=\begin{cases}
    \displaystyle
    \frac{\exp(q_r^\top k_s/\sqrt{d_k})}
    {\sum_{u\in I_r}\exp(q_r^\top k_u/\sqrt{d_k})}, & s\in I_r,\\[1.2em]
    0, & s\notin I_r,
    \end{cases}
\end{equation}
where $q_r$ and $k_s$ are the rows of $Q$ and $K$.  Thus each active row is an ordinary smooth softmax on a finite-dimensional Euclidean space, and the forbidden coordinates are fixed zeros.  One may also replace $-\infty$ by a finite mask $-a$ and pass to the limit $a\to\infty$; the limiting expression relevant for differentiability is the finite-row formula above.
Thus $A_\theta(X)V W_O\in\R^{L\times d_x}$, and the feed-forward term also lies in $\R^{L\times d_x}$. A full pre-normalization Transformer block can be written schematically as
\begin{align}
\label{eq:full-transformer-block}
    U&=X+\operatorname{MHA}(\operatorname{LN}(X);\theta_{\rm att}),\notag\\
    \mathcal B(X;\theta)&=U+\operatorname{FFN}(\operatorname{LN}(U);\theta_{\rm ff}),
\end{align}
where, for $X\in\R^{L\times d_x}$, layer normalization is applied row-wise by
\begin{align}
\label{eq:row-layer-normalization}
    \operatorname{LN}(X)_{l,:}
    =\gamma\odot
    \frac{x_l-\bar x_l\mathbf 1_{d_x}}
    {\sqrt{d_x^{-1}\norm{x_l-\bar x_l\mathbf 1_{d_x}}^2+\epsilon_{\rm LN}}}
    +\beta,
    \qquad
    \bar x_l=d_x^{-1}\mathbf 1_{d_x}^\top x_l,
\end{align}
with fixed or trainable gain and bias vectors $\gamma,\beta\in\R^{d_x}$.  A multi-head attention map with $H_{\rm att}$ heads may be written as
\begin{align}
\label{eq:mha-formula}
    \operatorname{MHA}(X;\theta_{\rm att})
    =\operatorname{Concat}_{h=1}^{H_{\rm att}}
    \bigl(A_{\theta,h}(X)X W_{V,h}\bigr)W_O^{\rm mha},
\end{align}
where
\begin{align*}
    A_{\theta,h}(X)=\rowsoftmax\left(\frac{(XW_{Q,h})(XW_{K,h})^\top}{\sqrt{d_{k,h}}}+\mathcal M\right).
\end{align*}
Equivalently, one may absorb the concatenation and output projection into a finite sum of projected heads.  In either notation, $\operatorname{MHA}(X;\theta_{\rm att})\in\R^{L\times d_x}$.
The feed-forward part is
\begin{align*}
    \operatorname{FFN}(U;\theta_{\rm ff})=\sigma(UW_1+\mathbf 1_L b_1^\top)W_2+\mathbf 1_L b_2^\top\in \R^{L\times d_x}.
\end{align*}
It is applied row-wise, where $\sigma$ is a smooth activation applied componentwise such as the sigmoid activation function. The residual vector field associated with such a block is the increment $\mathcal B(X;\theta)-X$, possibly rescaled.
To simplify the main analysis, we use the following single-head smooth model vector field, obtained from \eqref{eq:full-transformer-block} by suppressing layer normalization, using one attention head, and combining the two residual increments into one map:
\begin{equation}
\label{eq:transformer-vector-field}
    F(X;\theta)=A_\theta(X)VW_O+\sigma(XW_1+\mathbf 1_Lb_1^\top)W_2+\mathbf 1_Lb_2^\top.
\end{equation}
This vector field is a deliberately simplified Transformer block.  It retains the two components that matter for the continuous-depth approximation--a masked self-attention term and a pointwise feed-forward term--and absorbs other architectural details into the abstract map $F$.  This simplification is legitimate for the present error analysis since the proofs use only uniform boundedness, Lipschitz continuity in the hidden state, and Lipschitz dependence on the layer parameter.  Multi-head attention can be represented by concatenating or summing finitely many attention maps inside $F$, while residual scaling, smooth layer-normalization approximations, and a trainable output projection merely change the constants in these local bounds.  Thus the formula \eqref{eq:transformer-vector-field} should be read as a model vector field rather than as an architectural restriction essential to the Euler or empirical-risk estimates. The important point is that the residual architecture has the form
\begin{align*}
    X_{k+1}=X_k+\eps\zeta(k\eps)F(X_k;\theta_k).
\end{align*}

\begin{proposition}[Checkable bounds for the simplified single-head block]
\label{prop:single-head-checkable}
Fix $R,B<\infty$ and define
\begin{align*}
    K_R&=\{X\in\R^{L\times d_x}:\norm{X}\le R\},\\
    \Theta_B&=\{\theta:\norm{W_Q},\norm{W_K},\norm{W_V},\norm{W_O},\norm{W_1},\norm{W_2},\norm{b_1},\norm{b_2}\le B\}.
\end{align*}
Assume that $\sigma\in C^2(\R)$ and set
\begin{align*}
    U_{R,B}:=RB+B,
    \qquad
    S_m(R,B):=\max_{|u|\le U_{R,B}} |\sigma^{(m)}(u)|,
    \qquad m=0,1,2.
\end{align*}
For the finite-row masked softmax definition \eqref{eq:finite-row-masked-softmax}, the vector field \eqref{eq:transformer-vector-field} satisfies, on $K_R\times\Theta_B$,
\begin{align*}
    \norm{F(X;\theta)}&\le B_F,\\
    \norm{F(X;\theta)-F(X';\theta)}&\le L_F\norm{X-X'},\\
    \norm{D_XF(X;\theta)-D_XF(X';\theta)}&\le L_F^{(1)}\norm{X-X'},\\
    \norm{F(X;\theta)-F(X;\vartheta)}&\le L_\theta\norm{\theta-\vartheta},
\end{align*}
for all $X,X'\in K_R$ and $\theta,\vartheta\in\Theta_B$.  The constants $B_F,L_F,L_F^{(1)}$, and $L_\theta$ depend only on
\begin{align*}
    L,\ d_x,\ d_k,\ d_{\rm ff},\ R,\ B,
    \quad\text{and}\quad S_0(R,B),S_1(R,B),S_2(R,B),
\end{align*}
and on the fixed mask pattern, but not on the particular $X,X',\theta$, or $\vartheta$.  If the readout $H$ is $C^1$ on $K_R$, then Assumption \ref{ass:readout} also holds on $K_R$ with constants depending on the corresponding suprema of $H$ and $D H$.
\end{proposition}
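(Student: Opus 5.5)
The plan is to decompose $F$ into its attention part $F_{\rm att}(X;\theta)=A_\theta(X)XW_VW_O$ and its feed-forward part $F_{\rm ff}(X;\theta)=\sigma(XW_1+\mathbf 1_Lb_1^\top)W_2+\mathbf 1_Lb_2^\top$, and to bound each on $K_R\times\Theta_B$. The two parts are handled by compositions of smooth maps on compact sets, with explicit constants. All norms will be Frobenius (or operator) norms. Equivalence of norms in fixed dimension absorbs the dimensional factors $L,d_x,d_k,d_{\rm ff}$, and $\norm{\mathbf 1_L}=\sqrt L$.

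\textbf{Feed-forward term.} For $X\in K_R$ and $\theta\in\Theta_B$, each entry of $XW_1+\mathbf 1_Lb_1^\top$ has modulus at most $\norm{X}\norm{W_1}+\norm{b_1}\le RB+B=U_{R,B}$. This is why $U_{R,B}$ appears, and it means only the values of $\sigma,\sigma',\sigma''$ on $[-U_{R,B},U_{R,B}]$ are relevant. From this I will read off the following estimates.
\begin{itemize}
\item $\norm{F_{\rm ff}}\le \sqrt{Ld_{\rm ff}}\,S_0B+\sqrt L B$.
\item The Lipschitz constant in $X$ is at most $S_1B^2$, by the mean value theorem on the convex box of preactivations.
\item $D_XF_{\rm ff}(X)[E]=(\sigma'(\cdot)\odot(EW_1))W_2$, so $D_XF_{\rm ff}$ is Lipschitz in $X$ with constant of order $S_2B^3$.
\item In $\theta$, a product-rule telescoping gives $\norm{\sigma(XW_1+\mathbf 1b_1^\top)W_2-\sigma(XW_1'+\mathbf 1b_1'^\top)W_2'}\le S_1(R\norm{W_1-W_1'}+\sqrt L\norm{b_1-b_1'})B+\sqrt{Ld_{\rm ff}}S_0\norm{W_2-W_2'}$. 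The bias $b_2$ contributes linearly.
\end{itemize}

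\textbf{Attention term.} I will use the finite-row formula \eqref{eq:finite-row-masked-softmax}. Row $r$ of $A_\theta(X)$ is $\softmax$ applied to the vector $(q_r^\top k_s/\sqrt{d_k})_{s\in I_r}$ and padded with zeros. The softmax map $\R^{m}\to\Delta_m$ is $C^\infty$, takes values in the simplex, has Jacobian $\operatorname{diag}(p)-pp^\top$ with norm at most $1$, and has a second derivative bounded by an absolute constant. The logits $S(X,\theta)=XW_QW_K^\top X^\top/\sqrt{d_k}$ are polynomial in $(X,\theta)$. On $K_R\times\Theta_B$ they are bounded by $R^2B^2/\sqrt{d_k}$, they are Lipschitz in $X$ with constant $2RB^2/\sqrt{d_k}$ and in $(W_Q,W_K)$ with constant $R^2B/\sqrt{d_k}$, and their second $X$-derivative is bounded by $2B^2/\sqrt{d_k}$. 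From this I get:
\begin{itemize}
\item $\norm{A_\theta(X)}\le\sqrt L$, because rows are probability vectors.
\item $A_\theta$ is Lipschitz in $X$ and in $\theta$, as a composition of Lipschitz maps.
\item $D_XA_\theta$ is Lipschitz in $X$, by the chain rule $D(\softmax\circ S)=D\softmax(S)\,DS$ together with the bounds above.
\end{itemize}
Then $F_{\rm att}=A_\theta(X)\cdot XW_VW_O$ is a product of bounded Lipschitz factors with Lipschitz derivatives on the compact set. The product rule gives all four bounds. For example, $\norm{F_{\rm att}}\le\sqrt L RB^2$, and the Lipschitz constant in $X$ is at most $\mathrm{Lip}(A)RB^2+\sqrt LB^2$. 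Derivative Lipschitz bounds come from $D(AG)=(DA)G+A(DG)$ with each factor bounded and Lipschitz. The mask pattern enters only through which index sets $I_r$ appear, and this is why the constants depend on it.

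\textbf{Main obstacle.} The main care point is the Lipschitz bound for $D_XF$, which needs the second-derivative bound for softmax uniformly in the row dimension $|I_r|\le L$. I will prove it directly. Writing $J(z)=\operatorname{diag}(p)-pp^\top$, each entry of $J$ is a polynomial in $p$ with bounded coefficients, and $p$ is $1$-Lipschitz in $z$. Hence $J$ is Lipschitz with an absolute constant (at most $3$, say). The mixed terms that combine the Lipschitz constant of $DS$ with the sup of $D\softmax$ are then routine.

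\textbf{Readout and assembly.} For the readout statement: if $H\in C^1$ on the compact convex set $K_R$, then $H$ and $DH$ are bounded there. The mean value inequality on the convex ball gives $\norm{H(X)-H(X')}\le\sup_{K_R}\norm{DH}\,\norm{X-X'}$, which, together with $\sup_{K_R}\norm{H}$, yields Assumption \ref{ass:readout} with those constants. Summing the attention and feed-forward constants produces $B_F,L_F,L_F^{(1)},L_\theta$, which depend only on the listed quantities.
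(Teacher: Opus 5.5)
Your proposal is correct and follows essentially the same route as the paper: split $F$ into the masked-attention and feed-forward terms, use that the preactivations stay in $[-U_{R,B},U_{R,B}]$ and that each finite-row softmax is smooth with bounded derivatives, and pass from derivative bounds to the Lipschitz bounds via the mean value theorem, with the readout handled by compactness of $K_R$. Two things go beyond the paper's qualitative compactness argument: you give explicit constants through telescoping and product-rule estimates together with global Lipschitz bounds for softmax and its Jacobian, and you explicitly invoke convexity of $K_R$ to get the Lipschitz bound for $H$, a step the paper's appeal to the extreme-value theorem leaves implicit.
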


\begin{proof}
For $X\in K_R$ and $\theta\in\Theta_B$, all quantities entering the attention scores, values, output projection, feed-forward preactivations, and biases are contained in compact sets determined by $L,d_x,d_k,d_{\rm ff},R$, and $B$.  In particular, the allowed attention scores $q_r^\top k_s/\sqrt{d_k}$ are bounded by a constant depending only on these parameters.  On each row $I_r$, the map from the allowed scores to the probabilities in \eqref{eq:finite-row-masked-softmax} is the ordinary finite-dimensional softmax and is $C^\infty$ with bounded first and second derivatives on the relevant compact score set.  The forbidden coordinates are constant zeros and therefore contribute no derivatives.

The attention term $A_\theta(X)VW_O$ is a composition of bounded matrix multiplications, a finite-row softmax, and products with parameters in $\Theta_B$.  Hence the term and its first two derivatives with respect to $X$ are uniformly bounded on $K_R\times\Theta_B$, with constants depending only on the displayed dimensions and bounds.  The feed-forward term
\begin{align*}
    \sigma(XW_1+\mathbf 1_L b_1^\top)W_2+\mathbf 1_Lb_2^\top
\end{align*}
is bounded by the same type of constants since the preactivations lie in $[-U_{R,B},U_{R,B}]$ and the derivatives of $\sigma$ up to order two are bounded by $S_0,S_1,S_2$ on this interval.  Combining the two terms gives a uniform bound on $F$ and on the first two $X$-derivatives of $F$.  The mean-value theorem then gives the Lipschitz bound in $X$ and the Lipschitz bound for $D_XF$.

The same compactness argument applied to the first derivatives of the formula with respect to the entries of $W_Q,W_K,W_V,W_O,W_1,b_1,W_2,b_2$ gives a uniform bound on $D_\theta F$ on $K_R\times\Theta_B$.  Another application of the mean-value theorem gives the stated Lipschitz dependence on $\theta$.  The statement for $H$ follows from the extreme-value theorem on the compact set $K_R$.
\end{proof}

\subsection{Discrete-depth and continuous-depth objectives}

Let $T>0$ be fixed and let $M\in\N$ and $\eps>0$ satisfy $M\eps=T$.  Given a layer sequence $\theta_0,\ldots,\theta_{M-1}$, define the discrete Transformer trajectory by
\begin{equation}
\label{eq:discrete-transformer}
    X_{k+1}^{\eps}=X_k^{\eps}+\eps\zeta(t_k)F(X_k^{\eps};\theta_k),
    \qquad t_k=k\eps,
    \qquad X_0^{\eps}=X_0.
\end{equation}
We denote by $\thetaeps$ the piecewise constant interpolation
\begin{equation}
\label{eq:piecewise-control}
    \thetaeps_t=\theta_k,
    \qquad t\in[t_k,t_{k+1}).
\end{equation}
The associated continuous-depth trajectory is
\begin{equation}
\label{eq:continuous-transformer}
    \dot X_t=\zeta(t)F(X_t;\thetaeps_t),
    \qquad X_0=X_0.
\end{equation}
The population finite-depth and continuous-depth risks are
\begin{align}
\label{eq:population-discrete-risk}
    \mcJ_\eps(\thetaeps)
    &=\E_{X_0\sim\mu_X}\bigl[\ell(Y(X_0),H(X_M^{\eps}))\bigr],\\
\label{eq:population-continuous-risk}
    \mcJ(\thetaeps)
    &=\E_{X_0\sim\mu_X}\bigl[\ell(Y(X_0),H(X_T))\bigr].
\end{align}
For independent training inputs $X_0^1,\ldots,X_0^N$ with common law $\mu_X$, set $Y^i=Y(X_0^i)$.  The empirical finite-depth training loss is
\begin{equation}
\label{eq:empirical-risk}
    \mcJ_{N,\eps}(\thetaeps)
    =\frac1N\sum_{i=1}^N \ell(Y^i,H(X_{M}^{\eps,i})),
\end{equation}
where $X_k^{\eps,i}$ solves \eqref{eq:discrete-transformer} from $X_0^i$.

\section{Assumptions and preliminary estimates}
\label{sec:assumptions}

We use the Frobenius norm on matrices and the corresponding Euclidean norm after vectorization.

\begin{assumption}[Depth profile and admissible controls]
\label{ass:controls}
The depth profile $\zeta:[0,T]\to[0,\infty)$ is bounded and Lipschitz.  There exist constants $\zeta_+$ and $L_\zeta$ such that
\begin{align*}
    0\le \zeta(t)\le \zeta_+,
    \qquad \abs{\zeta(t)-\zeta(s)}\le L_\zeta\abs{t-s}.
\end{align*}
The control values belong to a compact set $\Theta\subset\R^{d_\theta}$, and admissible controls are measurable maps $\theta:[0,T]\to\Theta$.
\end{assumption}

\begin{assumption}[Regularity of the Transformer vector field]
\label{ass:field}
There are constants $B_F,L_F,L_F^{(1)}<\infty$ such that, on a relevant invariant compact set $K\subset\R^{L\times d_x}$, for all $\theta\in\Theta$,
\begin{align}
\label{eq:bounded-vector-field-assumptions}
    \norm{F(X;\theta)}&\le B_F,\\
    \norm{F(X;\theta)-F(X';\theta)}&\le L_F\norm{X-X'},\\
    \norm{D_XF(X;\theta)-D_XF(X';\theta)}&\le L_F^{(1)}\norm{X-X'}.
\end{align}
Moreover, the discrete and continuous trajectories considered below remain in $K$ on $[0,T]$.
\end{assumption}

\begin{remark}[On the compactness assumption]
The compactness condition can be interpreted in several standard ways.  One may project the dynamics onto a compact parameter/state domain, impose an invariant-domain condition, or work conditionally on an a priori bound for the trajectories; also see Proposition \ref{prop:uniform-local-compactness}.  For the explicit attention map \eqref{eq:transformer-vector-field}, smoothness and Lipschitz bounds hold on every bounded set since the masked softmax is smooth on the finite allowed rows and the remaining operations are polynomial or smooth.  Thus the constants in Assumption \ref{ass:field} are local-in-state constants. 
More explicitly, Proposition \ref{prop:single-head-checkable} verifies these local constants for the simplified single-head block on a bounded state set and compact parameter set.  Therefore Assumption \ref{ass:field} should be read as a checkable local hypothesis for a projected, regularized, normalized, or otherwise range-controlled model.  It is not a global assertion about an unconstrained Transformer parameterization or about all possible training trajectories.

Layer normalization gives another reason why compact invariant state sets are natural.  For a token vector $x\in\R^{d_x}$, let
\begin{align*}
    \bar x=\frac1{d_x}(\mathbf 1_{d_x}^\top x)\mathbf 1_{d_x},
    \qquad
    P_0x=x-\bar x,
    \qquad
    H_0=\{u\in\R^{d_x}:\mathbf 1_{d_x}^\top u=0\}.
\end{align*}
The idealized layer-normalized direction is
\begin{align*}
    \operatorname{LN}_0(x)
    =\frac{P_0x}{\sqrt{d_x^{-1}\norm{P_0x}^2+\epsilon_{\rm LN}}}
    \in H_0,
\end{align*}
so that
\begin{align*}
    \norm{\operatorname{LN}_0(x)}^2
    =\frac{\norm{P_0x}^2}{d_x^{-1}\norm{P_0x}^2+\epsilon_{\rm LN}}
    \le d_x.
\end{align*}
In the zero-stabilization idealization $\epsilon_{\rm LN}=0$ and $P_0x\ne0$, this gives $\norm{\operatorname{LN}_0(x)}=\sqrt{d_x}$; hence the normalized token lies on the sphere $\sqrt{d_x}S(H_0)$ inside the mean-zero subspace.  With fixed gain and bias vectors $\gamma,\beta\in\R^{d_x}$, the affine normalized token belongs to
\begin{align*}
    K_{\rm LN}^{(1)}
    =\{\gamma\odot u+\beta:u\in H_0,\ \norm{u}\le \sqrt{d_x}\},
\end{align*}
which is compact.  For a sequence of $L$ tokens, this yields the product compact set
\begin{align*}
    K_{\rm LN}=(K_{\rm LN}^{(1)})^L\subset\R^{L\times d_x}.
\end{align*}
Thus, the hidden state can be regarded as evolving on a product of spheres or closed balls in mean-zero subspaces.  This is the sense in which layer normalization can justify a compact state manifold; see \citet{GeshkovskiLetrouitPolyanskiyRigollet2025}, where layer normalization is used to project token dynamics to the sphere.
In the simplified vector field \eqref{eq:transformer-vector-field}, however, layer normalization has been suppressed.  Hence the compact set $K$ in Assumption \ref{ass:field} is an explicit modeling or localization assumption for that simplified equation.  If one studies the normalized block \eqref{eq:full-transformer-block} instead, the same type of local estimates apply after replacing $F$ by the normalized residual increment and using the bounded image of the stabilized normalization map.
\end{remark}

\begin{assumption}[Readout and bounded loss]
\label{ass:readout}
The readout map $H:K\to\R^{L\times d_v}$ is Lipschitz with constant $L_H$ and is bounded by $B_H$ on $K$.  Consequently, the cross-entropy loss is bounded on the relevant set: there exists $B_\ell<\infty$ such that
\begin{align*}
    0\le \ell(Y,H(X))\le B_\ell,
    \qquad Y\in\DeltaV^L,
    \quad X\in K.
\end{align*}
\end{assumption}

The next lemma converts the bounded-readout assumption into a quantitative continuity estimate for the cross-entropy loss.  This elementary bound is used repeatedly to pass from trajectory errors to risk errors.
\begin{lemma}[Lipschitz property of cross-entropy in logits]
\label{lem:ce-lip}
For all $Y\in\DeltaV^L$ and $Z,Z'\in\R^{L\times d_v}$,
\begin{align}
\label{eq:ce-lip}
    \abs{\ell(Y,Z)-\ell(Y,Z')}
    \le \frac{2}{\sqrt L}\norm{Z-Z'}.
\end{align}
In particular, under Assumption \ref{ass:readout},
\begin{align}
\label{eq:ce-h-lip}
    \abs{\ell(Y,H(X))-\ell(Y,H(X'))}
    \le L_\ell \norm{X-X'},
    \qquad L_\ell:=\frac{2L_H}{\sqrt L}.
\end{align}
\end{lemma}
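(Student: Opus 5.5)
The plan is to prove \eqref{eq:ce-lip} row by row via the mean value theorem and the explicit gradient \eqref{eq:ce-gradient}, then assemble the rows with Cauchy--Schwarz, and finally compose with the readout map. By \eqref{eq:ce-loss} the loss is an average of row contributions,
\begin{align*}
    \ell(Y,Z)=\frac1L\sum_{l=0}^{L-1}\phi_l(Z_{l,:}),
    \qquad
    \phi_l(z)=\log\sum_{j=1}^{d_v}e^{z_j}-Y_l^\top z .
\end{align*}
This form is valid for every $Y_l\in\DeltaV$, not only one-hot vectors, because $\sum_jY_{l,j}=1$. Each $\phi_l$ is $C^\infty$ on $\R^{d_v}$ with $\nabla\phi_l(z)=\softmax(z)-Y_l$.

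The first step is a uniform bound on this gradient. Both $\softmax(z)$ and $Y_l$ lie in $\DeltaV$, so
\begin{align*}
    \norm{\softmax(z)-Y_l}\le\norm{\softmax(z)-Y_l}_1\le\norm{\softmax(z)}_1+\norm{Y_l}_1=2 .
\end{align*}
Applying the mean value theorem along the segment from $Z_{l,:}$ to $Z'_{l,:}$ then gives $\abs{\phi_l(Z_{l,:})-\phi_l(Z'_{l,:})}\le 2\norm{Z_{l,:}-Z'_{l,:}}$.

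Next I sum over rows and use Cauchy--Schwarz:
\begin{align*}
    \abs{\ell(Y,Z)-\ell(Y,Z')}\le\frac{2}{L}\sum_{l=0}^{L-1}\norm{Z_{l,:}-Z'_{l,:}}
    \le\frac{2}{L}\sqrt{L}\Bigl(\sum_{l=0}^{L-1}\norm{Z_{l,:}-Z'_{l,:}}^2\Bigr)^{1/2}=\frac{2}{\sqrt L}\norm{Z-Z'} .
\end{align*}
The last equality holds because we use the Frobenius norm. Alternatively, one can view $\ell$ as a function on $\R^{L\times d_v}$ whose Frobenius gradient norm is at most $\frac1L\sqrt{L\cdot 4}=2/\sqrt L$, and apply the mean value theorem once on the whole space. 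Either route works, since $\R^{L\times d_v}$ is convex.

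For \eqref{eq:ce-h-lip}, I apply \eqref{eq:ce-lip} with $Z=H(X)$ and $Z'=H(X')$. Assumption \ref{ass:readout} gives $\norm{H(X)-H(X')}\le L_H\norm{X-X'}$ for $X,X'\in K$, and composing the two bounds yields the constant $2L_H/\sqrt L$.

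There is no genuine obstacle. The only points needing care are the gradient bound of $2$ and the fact that the log-sum-exp form of $\ell$ requires $\sum_jY_{l,j}=1$, which holds for every element of the simplex. The $\ell_1$ bound of $2$ is crude; the sharper bound $\sqrt2$ is available but is not needed for the stated constant.
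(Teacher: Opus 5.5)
Your proof is correct and follows essentially the paper's route: you bound each row of the gradient, $\softmax(Z_{l,:})-Y_l$, by $2$ as a difference of probability vectors, apply the mean value theorem, and compose with the $L_H$-Lipschitz readout. The paper applies the mean value theorem once on $\R^{L\times d_v}$ using $\norm{\nabla_Z\ell}^2\le 4/L$, which is exactly your stated alternative, and your row-wise version with Cauchy--Schwarz gives the same constant $2/\sqrt L$.
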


\begin{proof}
By \eqref{eq:ce-gradient}, each row of $\nabla_Z\ell$ equals $L^{-1}(\softmax(Z_{l,:})-Y_l)$ for $l=0,\ldots,L-1$.  Since both $\softmax(Z_{l,:})$ and $Y_l$ are probability vectors, their Euclidean distance is at most $2$.  Therefore
\begin{align*}
    \norm{\nabla_Z\ell(Y,Z)}^2
    \le \sum_{l=0}^{L-1} \frac{4}{L^2}=\frac4L.
\end{align*}
The mean-value theorem gives \eqref{eq:ce-lip}.  Combining this with the Lipschitz property of $H$ gives \eqref{eq:ce-h-lip}.
\end{proof}

\section{Finite-depth approximation and statistical error estimates}
\label{sec:finite-depth-approximation}
This section gives the two basic quantitative approximations used throughout the paper.  First, the residual Transformer recursion is compared with the continuous controlled flow by an Euler-type pathwise estimate.  Second, the empirical finite-depth cross-entropy risk is compared with the population continuous-depth risk.  The section also records uniform versions over finite and entropy-controlled classes of layer sequences, which are later used to compare optimal values and approximate minimizers.
\subsection{Discrete-to-continuous trajectory approximation}

The first approximation is purely deterministic and concerns one input sequence.  The same estimate holds uniformly over all samples since the constants are independent of the initial condition as long as the trajectory remains in $K$.

\begin{theorem}[Pathwise Euler approximation]
\label{thm:pathwise}
Assume Assumptions \ref{ass:controls} and \ref{ass:field}.  Let $X_k^\eps$ solve the discrete Transformer recursion \eqref{eq:discrete-transformer}, and let $X_t$ solve the continuous controlled flow \eqref{eq:continuous-transformer} with the same initial condition and the piecewise constant control \eqref{eq:piecewise-control}.  Then there exists a constant $C_T<\infty$, depending only on $T$, $\zeta_+$, $L_\zeta$, and the constants in Assumption \ref{ass:field}, such that
\begin{equation}
\label{eq:pathwise-error}
    \max_{0\le k\le M}\norm{X_k^\eps-X_{t_k}}
    \le C_T\eps.
\end{equation}
Consequently,
\begin{equation}
\label{eq:pathwise-loss-error}
    \abs{\ell(Y,H(X_M^\eps))-\ell(Y,H(X_T))}
    \le L_\ell C_T\eps.
\end{equation}
\end{theorem}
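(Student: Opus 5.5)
The plan is the standard consistency-plus-stability argument for the explicit Euler scheme, done one layer interval at a time. The point to exploit is that on each interval $[t_k,t_{k+1})$ the control $\thetaeps_t=\theta_k$ is constant. As a result, the parameter Lipschitz constant $L_\theta$ never enters, which matches the stated dependence of $C_T$. Set $e_k=X_k^\eps-X_{t_k}$, so $e_0=0$.

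Integrate \eqref{eq:continuous-transformer} over $[t_k,t_{k+1}]$ and subtract \eqref{eq:discrete-transformer}. This gives
\begin{align*}
e_{k+1}=e_k+\eps\zeta(t_k)\bigl(F(X_k^\eps;\theta_k)-F(X_{t_k};\theta_k)\bigr)-\tau_k,
\qquad
\tau_k=\int_{t_k}^{t_{k+1}}\bigl(\zeta(s)F(X_s;\theta_k)-\zeta(t_k)F(X_{t_k};\theta_k)\bigr)\dd s .
\end{align*}
The stability term is bounded by $\eps\zeta_+L_F\norm{e_k}$, using Assumption \ref{ass:field} and the fact that both trajectories stay in $K$.

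For the local truncation error $\tau_k$, split the integrand as
\begin{align*}
\zeta(s)F(X_s;\theta_k)-\zeta(t_k)F(X_{t_k};\theta_k)
=(\zeta(s)-\zeta(t_k))F(X_s;\theta_k)+\zeta(t_k)\bigl(F(X_s;\theta_k)-F(X_{t_k};\theta_k)\bigr).
\end{align*}
The first piece is at most $L_\zeta(s-t_k)B_F$. For the second piece, use that $\norm{\dot X_s}\le\zeta_+B_F$, so $\norm{X_s-X_{t_k}}\le\zeta_+B_F(s-t_k)$, and the piece is at most $\zeta_+^2L_FB_F(s-t_k)$. Integrating gives
\begin{align*}
\norm{\tau_k}\le \tfrac12\eps^2\bigl(L_\zeta B_F+\zeta_+^2L_FB_F\bigr)=:c\,\eps^2 .
\end{align*}
The $C^{1,1}$ bound on $D_XF$ is not needed for this first-order estimate; it would matter only for a second-order expansion.

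Combining the two bounds yields $\norm{e_{k+1}}\le(1+\eps\zeta_+L_F)\norm{e_k}+c\eps^2$. The discrete Gronwall inequality then gives
\begin{align*}
\norm{e_k}\le c\eps^2\sum_{j<k}(1+\eps\zeta_+L_F)^j\le \frac{c}{\zeta_+L_F}\bigl(e^{\zeta_+L_FT}-1\bigr)\eps .
\end{align*}
We may take $C_T=cTe^{\zeta_+L_FT}$, which also covers the degenerate case $\zeta_+L_F=0$. Since $t_M=T$, \eqref{eq:pathwise-loss-error} follows from \eqref{eq:ce-h-lip} in Lemma \ref{lem:ce-lip} applied at $k=M$. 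That step uses Assumption \ref{ass:readout} and the fact that $X_M^\eps,X_T\in K$.

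The only delicate step is the well-posedness and regularity of the continuous trajectory. The control is merely measurable in general, but here it is piecewise constant. So on each interval the right-hand side $\zeta(t)F(X;\theta_k)$ is continuous in $t$ and Lipschitz in $X$ on $K$, and the solution is a $C^1$ Carathéodory solution on each interval, glued continuously at the nodes $t_k$. The invariance hypothesis in Assumption \ref{ass:field} guarantees that every Lipschitz and boundedness estimate above is applied inside $K$. No a priori bound beyond that hypothesis is required.
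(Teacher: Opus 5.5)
Your proposal is correct and follows essentially the same route as the paper. Both use the same one-step error recursion with a local truncation term, bound that term by splitting it via the Lipschitz bound on $\zeta$ and the speed bound $\norm{\dot X_s}\le\zeta_+B_F$, and then conclude with the discrete Gronwall inequality and \Cref{lem:ce-lip}; your explicit constant $C_T=cTe^{\zeta_+L_FT}$ and your remark that neither $L_\theta$ nor $L_F^{(1)}$ is needed are accurate refinements of what the paper leaves implicit.
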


\begin{proof}
Set
\begin{align*}
    \delta_k=X_k^\eps-X_{t_k},\qquad k=0,\ldots,M.
\end{align*}
For $t\in[t_k,t_{k+1})$, the continuous solution satisfies
\begin{align*}
    X_{t_{k+1}}
    =X_{t_k}+\int_{t_k}^{t_{k+1}}\zeta(s)F(X_s;\theta_k)\dd s.
\end{align*}
Subtracting this identity from \eqref{eq:discrete-transformer} gives
\begin{align*}
    \delta_{k+1}
    =\delta_k+\eps\zeta(t_k)\bigl[F(X_k^\eps;\theta_k)-F(X_{t_k};\theta_k)\bigr]-r_k,
\end{align*}
where
\begin{align*}
    r_k=\int_{t_k}^{t_{k+1}}\left[\zeta(s)F(X_s;\theta_k)-\zeta(t_k)F(X_{t_k};\theta_k)\right]\dd s.
\end{align*}
We now estimate the local truncation term.  Add and subtract $\zeta(t_k)F(X_s;\theta_k)$ inside the integrand.  By the Lipschitz property of $\zeta$ and the boundedness of $F$,
\begin{align*}
    \norm{r_k}
    &\le \int_{t_k}^{t_{k+1}}\abs{\zeta(s)-\zeta(t_k)}\norm{F(X_s;\theta_k)}\dd s
    +\int_{t_k}^{t_{k+1}}\zeta(t_k)\norm{F(X_s;\theta_k)-F(X_{t_k};\theta_k)}\dd s\\
    &\le \int_{t_k}^{t_{k+1}} L_\zeta(s-t_k)B_F\dd s
    +\int_{t_k}^{t_{k+1}} \zeta_+L_F\norm{X_s-X_{t_k}}\dd s.
\end{align*}
On $[t_k,t_{k+1})$, the control is fixed at $\theta_k$ and
\begin{align*}
    \norm{\dot X_u}=\norm{\zeta(u)F(X_u;\theta_k)}\le \zeta_+B_F.
\end{align*}
Hence, for $s\in[t_k,t_{k+1}]$,
\begin{align*}
    \norm{X_s-X_{t_k}}
    \le \int_{t_k}^s\norm{\dot X_u}\dd u
    \le \zeta_+B_F(s-t_k).
\end{align*}
Substituting this estimate into the preceding bound gives
\begin{align*}
    \norm{r_k}
    &\le \left(L_\zeta B_F+\zeta_+^2L_FB_F\right)
    \int_{t_k}^{t_{k+1}}(s-t_k)\dd s
    \le C\eps^2.
\end{align*}
Thus
\begin{align*}
    \norm{\delta_{k+1}}
    \le (1+\eps\zeta_+L_F)\norm{\delta_k}+C\eps^2.
\end{align*}
Since $\delta_0=0$, the discrete Gronwall inequality gives
\begin{align*}
    \max_{0\le k\le M}\norm{\delta_k}\le C_T\eps.
\end{align*}
The loss estimate follows from \Cref{lem:ce-lip} and Assumption \ref{ass:readout}.
\end{proof}

\begin{corollary}[Population discretization error]
\label{cor:population-discretization}
Under the assumptions of \Cref{thm:pathwise} and Assumption \ref{ass:readout},
\begin{equation}
\label{eq:pop-disc-error}
    \abs{\mcJ_\eps(\thetaeps)-\mcJ(\thetaeps)}
    \le C_T\eps.
\end{equation}
\end{corollary}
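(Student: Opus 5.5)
The plan is to take expectations in the pathwise loss estimate \eqref{eq:pathwise-loss-error} of \Cref{thm:pathwise}. By definitions \eqref{eq:population-discrete-risk} and \eqref{eq:population-continuous-risk}, both risks are expectations over $X_0\sim\mu_X$ of the terminal cross-entropy evaluated at the same label $Y(X_0)$. The first step is to check that these expectations are well defined and finite. Assumption \ref{ass:field} places both trajectories in $K$, and Assumption \ref{ass:readout} then gives $0\le\ell\le B_\ell$, so both integrands are bounded.

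For measurability, the Euler iterate $X_M^\eps$ is a finite composition of continuous maps of $X_0$. The flow map $X_0\mapsto X_T$ is continuous, in fact Lipschitz by Gronwall, since $F$ is Lipschitz in $X$ uniformly in $\theta$ and the control is piecewise constant. The label map $Y$ is measurable by hypothesis, so both integrands are measurable in $X_0$.

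Next, I will write
\begin{align*}
\abs{\mcJ_\eps(\thetaeps)-\mcJ(\thetaeps)}\le \E_{X_0\sim\mu_X}\bigl[\abs{\ell(Y(X_0),H(X_M^\eps))-\ell(Y(X_0),H(X_T))}\bigr]
\end{align*}
by Jensen's inequality, or simply the triangle inequality for integrals. I will then apply \eqref{eq:pathwise-loss-error} pointwise in $X_0$. The key point is that the constant $C_T$ of \Cref{thm:pathwise} does not depend on the initial condition, only on $T,\zeta_+,L_\zeta$ and the field constants, as long as the trajectory stays in $K$. Hence the integrand is bounded by $L_\ell C_T\eps$ uniformly, and so is the expectation.

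Finally, I will rename the constant by absorbing $L_\ell=2L_H/\sqrt L$ into $C_T$, which is harmless since it now also depends on $L_H$ and $L$. The only genuine point to verify is this uniformity of $C_T$ over the support of $\mu_X$. It follows from the proof of \Cref{thm:pathwise}: the truncation bound and the discrete Gronwall step use only global constants on $K$. I therefore expect no real obstacle.
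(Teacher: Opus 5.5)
Your proposal is correct and follows the paper's proof: integrate the pointwise bound \eqref{eq:pathwise-loss-error} against $\mu_X$ via the triangle inequality for integrals, use that $C_T$ does not depend on $X_0$, and absorb $L_\ell$ into the constant. Your checks of measurability and boundedness of the integrands are details the paper leaves implicit; they do not change the argument.
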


\begin{proof}
For each input $X_0$ sampled from $\mu_X$, the label sequence $Y(X_0)$ is fixed by the deterministic label map.  Applying \eqref{eq:pathwise-loss-error} with $Y=Y(X_0)$ gives
\begin{align*}    
    \abs{\ell(Y(X_0),H(X_M^\eps(X_0)))-\ell(Y(X_0),H(X_T(X_0)))}
    \le L_\ell C_T\eps.
\end{align*}
Integrating this pointwise estimate with respect to $\mu_X(\dd X_0)$ yields
\begin{align*}
    \abs{\mcJ_\eps(\thetaeps)-\mcJ(\thetaeps)}
    &\le \int_{\R^{L\times d_x}}
    \abs{\ell(Y(X_0),H(X_M^\eps(X_0)))-\ell(Y(X_0),H(X_T(X_0)))}\,\mu_X(\dd X_0)\\
    &\le C_T\eps.
\end{align*}
The constant $C_T$ has been enlarged to absorb the loss Lipschitz constant.
\end{proof}

\subsection{Empirical-to-population approximation}

We next compare the empirical finite-depth risk with its population counterpart.  This is a statistical sampling estimate and is independent of the Euler approximation.

\begin{theorem}[Fixed-control empirical error]
\label{thm:fixed-control}
Assume Assumptions \ref{ass:controls}, \ref{ass:field} and \ref{ass:readout}.  Fix a layer sequence $\theta_0,\ldots,\theta_{M-1}$ and its interpolation $\thetaeps$.  Let $X_0^1,\ldots,X_0^N$ be independent samples from $\mu_X$, and set $Y^i=Y(X_0^i)$ for each $i$.  Then, for every $\delta\in(0,1)$, with probability at least $1-\delta$,
\begin{equation}
\label{eq:fixed-control-error}
    \abs{\mcJ_{N,\eps}(\thetaeps)-\mcJ(\thetaeps)}
    \le C_T\eps+B_\ell\sqrt{\frac{\log(2/\delta)}{2N}}.
\end{equation}
\end{theorem}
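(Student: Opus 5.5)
The plan is to split the error by the triangle inequality into a sampling term and a discretization term:
\begin{align*}
    \abs{\mcJ_{N,\eps}(\thetaeps)-\mcJ(\thetaeps)}
    \le \abs{\mcJ_{N,\eps}(\thetaeps)-\mcJ_\eps(\thetaeps)}
    +\abs{\mcJ_\eps(\thetaeps)-\mcJ(\thetaeps)}.
\end{align*}
The second term is deterministic. It is bounded by $C_T\eps$ directly from \Cref{cor:population-discretization}, which already absorbs $L_\ell$ into $C_T$. This part needs no probability at all.

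For the first term, introduce the random variables
\begin{align*}
    Z_i:=\ell\bigl(Y(X_0^i),H(X_M^{\eps,i})\bigr),\qquad i=1,\ldots,N.
\end{align*}
Since the layer sequence $\theta_0,\ldots,\theta_{M-1}$ is fixed independently of the sample, each $Z_i$ is a fixed measurable function $g(X_0^i)$, where $g$ composes the label map, the $M$-step recursion \eqref{eq:discrete-transformer}, the readout $H$, and $\ell$. Measurability holds because the recursion map is continuous and $Y$ is measurable. Hence the $Z_i$ are i.i.d. with mean $\E Z_i=\mcJ_\eps(\thetaeps)$ by \eqref{eq:population-discrete-risk}, and $\frac1N\sum_i Z_i=\mcJ_{N,\eps}(\thetaeps)$ by \eqref{eq:empirical-risk}.

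By Assumption \ref{ass:field}, the discrete trajectories stay in $K$, so Assumption \ref{ass:readout} gives $0\le Z_i\le B_\ell$. The two-sided Hoeffding inequality for variables with range $B_\ell$ then yields
\begin{align*}
    \Prob\Bigl(\abs{\mcJ_{N,\eps}(\thetaeps)-\mcJ_\eps(\thetaeps)}>s\Bigr)\le 2\exp\bigl(-2Ns^2/B_\ell^2\bigr).
\end{align*}
Setting the right side equal to $\delta$ gives $s=B_\ell\sqrt{\log(2/\delta)/(2N)}$. Combining this with the deterministic bound gives \eqref{eq:fixed-control-error} on an event of probability at least $1-\delta$.

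There is no real obstacle here. The one point to state explicitly is that the control is fixed, not chosen from the data, so the summands are genuinely independent and identically distributed. It also matters that the invariance of $K$ holds for every initial point in the support of $\mu_X$, since the bound $Z_i\le B_\ell$ must hold almost surely and not just on average.
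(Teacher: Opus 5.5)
Your proposal is correct and follows essentially the same route as the paper. Both use the triangle-inequality split, bound the deterministic part by \Cref{cor:population-discretization}, and apply two-sided Hoeffding to the i.i.d.\ losses $Z_i\in[0,B_\ell]$ with the same choice $r=B_\ell\sqrt{\log(2/\delta)/(2N)}$. Your added remarks on measurability, on the control being fixed independently of the sample, and on the almost-sure bound via invariance of $K$ only make explicit what the paper leaves implicit.
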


\begin{proof}
By the triangle inequality,
\begin{align*}
    \abs{\mcJ_{N,\eps}(\thetaeps)-\mcJ(\thetaeps)}
    \le \abs{\mcJ_{N,\eps}(\thetaeps)-\mcJ_\eps(\thetaeps)}
    +\abs{\mcJ_\eps(\thetaeps)-\mcJ(\thetaeps)}.
\end{align*}
The second term is bounded by \Cref{cor:population-discretization}.  For the first term, the random variables
\begin{align*}
    Z_i=\ell(Y^i,H(X_M^{\eps,i}))
\end{align*}
are independent and take values in $[0,B_\ell]$.  Hoeffding's inequality gives
\begin{align*}
    \Prob\left(\abs{\frac1N\sum_{i=1}^N Z_i-\E Z_i}>r\right)
    \le 2\exp\left(-\frac{2Nr^2}{B_\ell^2}\right).
\end{align*}
Choose
\begin{align*}
    r=B_\ell\sqrt{\frac{\log(2/\delta)}{2N}}.
\end{align*}
Then $2Nr^2/B_\ell^2=\log(2/\delta)$, and hence
\begin{align*}
    2\exp\left(-\frac{2Nr^2}{B_\ell^2}\right)
    =2\exp\bigl(-\log(2/\delta)\bigr)
    =\delta.
\end{align*}
Since
\begin{align*}
    \frac1N\sum_{i=1}^N Z_i=\mcJ_{N,\eps}(\thetaeps),
    \qquad
    \E Z_i=\mcJ_\eps(\thetaeps),
\end{align*}
Hoeffding's inequality gives, with probability at least $1-\delta$,
\begin{align*}
    \abs{\mcJ_{N,\eps}(\thetaeps)-\mcJ_\eps(\thetaeps)}
    \le B_\ell\sqrt{\frac{\log(2/\delta)}{2N}}.
\end{align*}
This is the standard bounded-difference concentration estimate; see, for example, \citet{BoucheronLugosiMassart2013}.
Combining this sampling estimate with the deterministic bound in \Cref{cor:population-discretization} proves \eqref{eq:fixed-control-error}.
\end{proof}

The fixed-control estimate is the statistically clean statement: the control is chosen before seeing the sample.  The uniform results below remain valid, but only for a fixed admissible class specified independently of the data.  They should therefore be read as oracle or class-comparison estimates, not as an explanation of why an SGD-trained Transformer selected adaptively from the same data generalizes. In particular, a layerwise grid with $q$ possible layer parameters has size $q^{T/\eps}$, giving a statistical term of order $\sqrt{T\log q/(N\eps)}$ (see Remark \ref{rem:size-control-class}), and the corresponding bound worsens as the continuous-depth discretization is refined.  A stronger generalization claim would require either a sharper complexity measure, such as Rademacher or PAC-Bayesian complexity, an algorithmic-stability analysis of the training procedure, or a temporally regular or low-dimensional control class whose complexity does not grow exponentially in $T/\eps$.

We now move from a single fixed control to a prescribed family of admissible layer sequences.  The next definition fixes the finite class over which the union-bound estimate is taken.
\begin{definition}[Finite-depth admissible control class]
\label{def:finite-control-class}
Let $M=T/\eps$ be the number of layers.  A finite-depth admissible control class is a subset
\begin{align*}
    \calA_\eps\subset\Theta^M
    =\{(\theta_0,\ldots,\theta_{M-1}):\theta_k\in\Theta\text{ for every }k\},
\end{align*}
whose elements are layer sequences $\thetaeps=(\theta_0,\ldots,\theta_{M-1})$.  Each layer sequence is identified with the piecewise constant control
\begin{align*}
    \thetaeps_t=\theta_k,
    \qquad t\in[t_k,t_{k+1}),
    \qquad t_k=k\eps.
\end{align*}
In the finite-class estimate \eqref{eq:finite-class-error} below we assume, in addition, that $\abs{\calA_\eps}<\infty$.
\end{definition}

\begin{theorem}[Uniform estimate over a finite control class]
\label{thm:finite-class}
Under the assumptions of \Cref{thm:fixed-control} and with $\calA_\eps$ as in Definition \ref{def:finite-control-class}, with probability at least $1-\delta$,
\begin{equation}
\label{eq:finite-class-error}
    \sup_{\thetaeps\in\calA_\eps}
    \abs{\mcJ_{N,\eps}(\thetaeps)-\mcJ(\thetaeps)}
    \le C_T\eps+B_\ell\sqrt{\frac{\log(2\abs{\calA_\eps}/\delta)}{2N}}.
\end{equation}
\end{theorem}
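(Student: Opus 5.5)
The plan is to reduce the uniform statement to the fixed-control estimate of \Cref{thm:fixed-control} by a union bound over the finite class $\calA_\eps$, keeping the deterministic discretization term outside the probabilistic argument.

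First, for every $\thetaeps\in\calA_\eps$, apply the triangle inequality exactly as in the proof of \Cref{thm:fixed-control}:
\begin{align*}
    \abs{\mcJ_{N,\eps}(\thetaeps)-\mcJ(\thetaeps)}
    \le \abs{\mcJ_{N,\eps}(\thetaeps)-\mcJ_\eps(\thetaeps)}
    +\abs{\mcJ_\eps(\thetaeps)-\mcJ(\thetaeps)}.
\end{align*}
The second term is bounded by $C_T\eps$ through \Cref{cor:population-discretization}. That bound is deterministic. Its constant depends only on $T,\zeta_+,L_\zeta$, the constants in Assumption \ref{ass:field}, and $L_\ell$, all of which are uniform over controls with values in $\Theta$. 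Hence it holds simultaneously for all $\thetaeps\in\calA_\eps$, with no loss in probability, and taking the supremum is harmless.

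Second, for the sampling term I fix $\thetaeps\in\calA_\eps$. The losses $Z_i^{\thetaeps}=\ell(Y^i,H(X_M^{\eps,i}))$ are i.i.d. with values in $[0,B_\ell]$, because the class is specified independently of the sample. Hoeffding's inequality therefore gives
\begin{align*}
    \Prob\Bigl(\abs{\mcJ_{N,\eps}(\thetaeps)-\mcJ_\eps(\thetaeps)}>r\Bigr)\le 2\exp\bigl(-2Nr^2/B_\ell^2\bigr).
\end{align*}
A union bound over the $\abs{\calA_\eps}$ elements bounds the probability that some element exceeds $r$ by $2\abs{\calA_\eps}\exp(-2Nr^2/B_\ell^2)$. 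Choosing
\begin{align*}
    r=B_\ell\sqrt{\frac{\log(2\abs{\calA_\eps}/\delta)}{2N}}
\end{align*}
makes this bound equal to $\delta$. So with probability at least $1-\delta$ the sampling deviation is at most $r$ for every element simultaneously. Combining this event with the deterministic bound and taking the supremum gives \eqref{eq:finite-class-error}.

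There is no real obstacle in this argument. The one point to check carefully is that the constant $C_T$ of \Cref{thm:pathwise} does not depend on the particular layer sequence. It depends only on the bounds $B_F,L_F$ valid for all $\theta\in\Theta$ and on the invariance of $K$ from Assumption \ref{ass:field}, and this is exactly what allows the deterministic term to be pulled outside the supremum. It is also worth noting that independence of $\calA_\eps$ from the data is essential. Otherwise the $Z_i^{\thetaeps}$ would not be i.i.d. for the selected control, and the union bound would not apply.
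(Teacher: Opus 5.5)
Your proposal is correct and follows essentially the same route as the paper. The paper applies \Cref{thm:fixed-control} at confidence level $\delta/\abs{\calA_\eps}$ to each control and takes a union bound. You instead pull the deterministic $C_T\eps$ term out first and union-bound only the Hoeffding deviations; this is the same argument unpacked one level, and it uses the same uniformity of $C_T$ over $\Theta$-valued controls.
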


\begin{proof}
Fix $\thetaeps\in\calA_\eps$.  By \Cref{thm:fixed-control}, applied with confidence level $\delta/\abs{\calA_\eps}$,
\begin{align*}    
    \Prob\left(
    \abs{\mcJ_{N,\eps}(\thetaeps)-\mcJ(\thetaeps)}
    >C_T\eps+B_\ell\sqrt{\frac{\log(2\abs{\calA_\eps}/\delta)}{2N}}
    \right)
    \le \frac{\delta}{\abs{\calA_\eps}}.
\end{align*}
Taking the union bound over all controls in the finite class gives
\begin{align*}
    \Prob\left(
    \sup_{\thetaeps\in\calA_\eps}
    \abs{\mcJ_{N,\eps}(\thetaeps)-\mcJ(\thetaeps)}
    >C_T\eps+B_\ell\sqrt{\frac{\log(2\abs{\calA_\eps}/\delta)}{2N}}
    \right)
    \le \sum_{\thetaeps\in\calA_\eps}\frac{\delta}{\abs{\calA_\eps}}
    =\delta.
\end{align*}
Equivalently, the desired uniform estimate holds with probability at least $1-\delta$.
\end{proof}

\begin{remark}[Size of a discretized control class]\label{rem:size-control-class}
The quantity $\abs{\calA_\eps}$ measures the number of admissible layer sequences over which the empirical risk is optimized.  For example, if each layer parameter is chosen from a finite grid $\Theta_q\subset\Theta$ with $q$ grid points and the network has $M=T/\eps$ layers, then the unrestricted class satisfies
\begin{align*}
    \abs{\calA_\eps}\le q^M=q^{T/\eps}.
\end{align*}
Substituting this bound into \eqref{eq:finite-class-error} gives a statistical term of order
\begin{align*}
    B_\ell\sqrt{\frac{M\log q+\log(2/\delta)}{2N}}
    =B_\ell\sqrt{\frac{(T/\eps)\log q+\log(2/\delta)}{2N}}.
\end{align*}
Thus a naive layerwise grid can be statistically expensive when $\eps$ is small.  Smaller structured classes, shared parameters, or regularized controls reduce $\abs{\calA_\eps}$ and lead to sharper uniform estimates.
This scaling also limits the interpretation of the bound: if $\eps\downarrow0$ while $N$ is fixed or grows slowly, the grid-complexity term can dominate the Euler improvement.  Hence this finite-class estimate is useful for controlled comparisons over deliberately restricted classes, but it is not by itself a practical generalization bound for full layerwise Transformer training.
\end{remark}

\subsection{Entropy-controlled uniform estimates}

For infinite control classes one needs a complexity measure.  We state a simple covering-number version.  For two piecewise constant controls $\thetaeps$ and $\vartheta^\eps$, define
\begin{align*}
    d_\infty(\thetaeps,\vartheta^\eps)
    =\max_{0\le k\le M-1}\norm{\theta_k-\vartheta_k}.
\end{align*}

\begin{assumption}[Lipschitz dependence on the control]
\label{ass:theta-lip}
There exists $L_\theta<\infty$ such that, for all $X\in K$ and $\theta,\vartheta\in\Theta$,
\begin{align*}
    \norm{F(X;\theta)-F(X;\vartheta)}\le L_\theta\norm{\theta-\vartheta}.
\end{align*}
\end{assumption}

\begin{lemma}[Lipschitz dependence of the risk on layer parameters]
\label{lem:risk-lip-control}
Under Assumptions \ref{ass:controls}, \ref{ass:field}, \ref{ass:readout}, and \ref{ass:theta-lip}, there exists $L_{\calA,T}<\infty$ such that, for any two layer sequences $\thetaeps$ and $\vartheta^\eps$,
\begin{align}
\label{eq:risk-lip-control}
    \abs{\mcJ_{N,\eps}(\thetaeps)-\mcJ_{N,\eps}(\vartheta^\eps)}
    +\abs{\mcJ(\thetaeps)-\mcJ(\vartheta^\eps)}
    \le L_{\calA,T}d_\infty(\thetaeps,\vartheta^\eps).
\end{align}
\end{lemma}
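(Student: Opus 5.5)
The plan is a Gronwall stability estimate for the discrete trajectories and one for the continuous trajectories, each with respect to perturbations of the control. We then pass to the losses through \Cref{lem:ce-lip} and average over samples. The constant $L_{\calA,T}$ will collect the empirical part and the population part.

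\emph{Discrete trajectories.} Fix an initial condition $X_0\in K$. Let $X_k^\eps$ and $\tilde X_k^\eps$ be the recursions \eqref{eq:discrete-transformer} driven by $\theta_k$ and $\vartheta_k$, and set $e_k=X_k^\eps-\tilde X_k^\eps$. Adding and subtracting $F(\tilde X_k^\eps;\theta_k)$, and using Assumptions \ref{ass:field} and \ref{ass:theta-lip} (both trajectories stay in $K$), we get
\begin{align*}
    \norm{e_{k+1}}\le(1+\eps\zeta_+L_F)\norm{e_k}+\eps\zeta_+L_\theta\norm{\theta_k-\vartheta_k}.
\end{align*}
Since $e_0=0$, the discrete Gronwall inequality gives
\begin{align*}
    \max_k\norm{e_k}\le \zeta_+L_\theta\,M\eps\, e^{\zeta_+L_FT}\,d_\infty(\thetaeps,\vartheta^\eps)=\zeta_+L_\theta T e^{\zeta_+L_FT}d_\infty(\thetaeps,\vartheta^\eps).
\end{align*}
The key point is that the factor $M\eps=T$ makes this bound independent of $\eps$.

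\emph{Continuous trajectories.} Let $X_t$ and $\tilde X_t$ solve \eqref{eq:continuous-transformer} with the piecewise constant controls $\thetaeps_t$ and $\vartheta^\eps_t$. The integral form gives
\begin{align*}
    \norm{X_t-\tilde X_t}\le\int_0^t\zeta_+\bigl(L_F\norm{X_s-\tilde X_s}+L_\theta\norm{\thetaeps_s-\vartheta^\eps_s}\bigr)\dd s.
\end{align*}
Since $\norm{\thetaeps_s-\vartheta^\eps_s}\le d_\infty(\thetaeps,\vartheta^\eps)$ for every $s$, Gronwall's inequality yields the same bound $\zeta_+L_\theta Te^{\zeta_+L_FT}d_\infty(\thetaeps,\vartheta^\eps)$ on $\norm{X_T-\tilde X_T}$.

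\emph{Passing to risks.} By \eqref{eq:ce-h-lip}, each per-sample loss difference is at most $L_\ell$ times the corresponding terminal state difference. This holds for each $X_0^i$ with $Y^i$ fixed, and for each $X_0$ with $Y(X_0)$ fixed. Averaging over the $N$ samples bounds the $\mcJ_{N,\eps}$ difference, and integrating against $\mu_X$ bounds the $\mcJ$ difference. Summing the two gives the lemma with
\begin{align*}
    L_{\calA,T}=2L_\ell\zeta_+L_\theta Te^{\zeta_+L_FT}.
\end{align*}
This constant is deterministic and does not depend on the sample or on $\eps$.

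The only delicate point is the standing requirement that the perturbed trajectories also remain in $K$. This is exactly what Assumption \ref{ass:field} postulates for all trajectories under admissible controls, so Lipschitz constants are available along both paths. Apart from that, the argument is routine.
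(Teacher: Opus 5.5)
Your proof is correct and follows the paper's argument: a discrete Gronwall estimate for the two recursions driven by $\thetaeps$ and $\vartheta^\eps$, the continuous Gronwall estimate for the corresponding flows, and then the cross-entropy Lipschitz bound averaged over the empirical measure and over $\mu_X$. The only difference is that you make the constant explicit and $\eps$-independent, $L_{\calA,T}=2L_\ell\zeta_+L_\theta Te^{\zeta_+L_FT}$, whereas the paper absorbs it into a generic $C_T$.
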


\begin{proof}
Let $X_k^\eps$ and $\widetilde X_k^\eps$ be the discrete trajectories driven by $\thetaeps$ and $\vartheta^\eps$ from the same initial condition.  Applying the discrete recursion \eqref{eq:discrete-transformer} to the two layer sequences and using Assumptions \ref{ass:field} and \ref{ass:theta-lip} gives 
\begin{align}
\label{eq:two-control-recursion}
    \norm{X_{k+1}^\eps-\widetilde X_{k+1}^\eps}
    \le (1+\eps\zeta_+L_F)\norm{X_k^\eps-\widetilde X_k^\eps}
    +\eps\zeta_+L_\theta d_\infty(\thetaeps,\vartheta^\eps).
\end{align}

By the discrete Gronwall inequality,
\begin{align*}
    \max_{k\le M}\norm{X_k^\eps-\widetilde X_k^\eps}
    \le C_T d_\infty(\thetaeps,\vartheta^\eps).
\end{align*}
The same estimate for the continuous flows follows by the usual Gronwall inequality.  Therefore, for every input $X_0$ and deterministic label $Y(X_0)$,
\begin{align*}
&\abs{\ell(Y(X_0),H(X_M^{\eps,\thetaeps}))
      -\ell(Y(X_0),H(X_M^{\eps,\vartheta^\eps}))}  \\
&\qquad\le L_\ell
\norm{X_M^{\eps,\thetaeps}-X_M^{\eps,\vartheta^\eps}}
\le C_T d_\infty(\thetaeps,\vartheta^\eps),
\end{align*}
and the same bound holds for the continuous terminal states $X_T^{\thetaeps}$ and $X_T^{\vartheta^\eps}$.  Averaging the discrete bound over the empirical measure gives the estimate for $\mcJ_{N,\eps}$, while averaging the continuous bound with respect to $\mu_X$ gives the estimate for $\mcJ$.  Enlarging the constant yields \eqref{eq:risk-lip-control}.
\end{proof}

\begin{theorem}[Metric-entropy uniform estimate]
\label{thm:metric-entropy}
Let $\calA_\eps$ be a possibly infinite class of admissible layer sequences, and let $\mathcal C_r$ be an $r$-net of $\calA_\eps$ under $d_\infty$ with cardinality $\mathcal N(r,\calA_\eps,d_\infty)$.  Under Assumptions \ref{ass:controls}, \ref{ass:field}, \ref{ass:readout} and \ref{ass:theta-lip}, for every $r>0$ and $\delta\in(0,1)$, with probability at least $1-\delta$,
\begin{align}
\label{eq:metric-entropy-error}
    \sup_{\thetaeps\in\calA_\eps}\abs{\mcJ_{N,\eps}(\thetaeps)-\mcJ(\thetaeps)}
    \le C_T\eps+2L_{\calA,T}r
    +B_\ell\sqrt{\frac{\log(2\mathcal N(r,\calA_\eps,d_\infty)/\delta)}{2N}}.
\end{align}
\end{theorem}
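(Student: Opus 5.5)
The plan is a standard discretization-plus-union-bound argument. We reduce the supremum over $\calA_\eps$ to a maximum over the finite net $\mathcal C_r$, using the Lipschitz control of \Cref{lem:risk-lip-control}, and then apply the finite-class estimate of \Cref{thm:finite-class} to $\mathcal C_r$. One point needs care first: the net must consist of admissible layer sequences, i.e.\ $\mathcal C_r\subset\Theta^M$. This is needed so that Assumption \ref{ass:field} (trajectories remaining in $K$) and \Cref{lem:risk-lip-control} apply to the net points. I will read the statement as providing an $r$-net of this kind; if the net is only in the ambient space, one passes to an internal $2r$-net and adjusts constants, but I take $\mathcal C_r\subset\calA_\eps$ or at least $\mathcal C_r\subset\Theta^M$.

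The steps, in order, are as follows.

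First, apply \Cref{thm:finite-class} to the finite class $\mathcal C_r$, whose cardinality is $\mathcal N(r,\calA_\eps,d_\infty)$. This gives an event $\Omega_\delta$ of probability at least $1-\delta$ on which
\begin{align*}
\max_{\vartheta^\eps\in\mathcal C_r}\abs{\mcJ_{N,\eps}(\vartheta^\eps)-\mcJ(\vartheta^\eps)}\le C_T\eps+B_\ell\sqrt{\frac{\log(2\mathcal N(r,\calA_\eps,d_\infty)/\delta)}{2N}}.
\end{align*}
This is legitimate because the net is fixed independently of the sample: $\mathcal C_r$ depends only on $\calA_\eps$ and $r$, and $\calA_\eps$ is itself a fixed class.

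Second, work on $\Omega_\delta$. For an arbitrary $\thetaeps\in\calA_\eps$, pick $\vartheta^\eps\in\mathcal C_r$ with $d_\infty(\thetaeps,\vartheta^\eps)\le r$. Split by the triangle inequality:
\begin{align*}
\abs{\mcJ_{N,\eps}(\thetaeps)-\mcJ(\thetaeps)}\le\abs{\mcJ_{N,\eps}(\thetaeps)-\mcJ_{N,\eps}(\vartheta^\eps)}+\abs{\mcJ_{N,\eps}(\vartheta^\eps)-\mcJ(\vartheta^\eps)}+\abs{\mcJ(\vartheta^\eps)-\mcJ(\thetaeps)}.
\end{align*}
\Cref{lem:risk-lip-control} bounds the sum of the first and third terms by $L_{\calA,T}r$, which is at most the stated $2L_{\calA,T}r$. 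The middle term is bounded by the first step. The empirical Lipschitz bound holds for every realization of the sample, because the lemma is deterministic with constants uniform over initial conditions in $K$. So the first and third terms require no additional probabilistic control.

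Third, the right-hand side is independent of $\thetaeps$, so we take the supremum over $\calA_\eps$ on $\Omega_\delta$ and conclude.

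The main obstacle is not technical; it is bookkeeping. One issue is that the constant $C_T$ must be the same one used in \Cref{thm:finite-class}, which holds since it depends only on the assumption constants. The other is that \Cref{lem:risk-lip-control} must apply pathwise to the empirical risk for every sample, which requires that all trajectories stay in $K$, as assumed. Measurability of the supremum is handled by noting that the bound is established on the event $\Omega_\delta$ itself, so no outer-probability issue arises.
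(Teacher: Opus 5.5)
Your proposal is correct and matches the paper's proof. Both apply \Cref{thm:finite-class} to the finite net $\mathcal C_r$, pass to an arbitrary $\thetaeps\in\calA_\eps$ through a nearby net point and \Cref{lem:risk-lip-control}, and take the supremum, while your points that the net must lie in $\Theta^M$ and that the lemma gives $L_{\calA,T}r$ for both transfer terms combined (so $2L_{\calA,T}r$ has slack) are accurate refinements the paper leaves implicit.
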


\begin{proof}
By \Cref{thm:finite-class}, with probability at least $1-\delta$,
\begin{align*}
    \sup_{\vartheta^\eps\in\mathcal C_r}
    \abs{\mcJ_{N,\eps}(\vartheta^\eps)-\mcJ(\vartheta^\eps)}
    \le C_T\eps+B_\ell\sqrt{\frac{\log(2\abs{\mathcal C_r}/\delta)}{2N}}.
\end{align*}
For any $\thetaeps\in\calA_\eps$, choose $\vartheta^\eps\in\mathcal C_r$ with $d_\infty(\thetaeps,\vartheta^\eps)\le r$.  Then \Cref{lem:risk-lip-control} bounds the two approximation terms by $2L_{\calA,T}r$.  Taking the supremum proves the theorem.
\end{proof}

\begin{remark}[Estimating the covering number]
The metric $d_\infty$ treats a depth-$M$ control as a point in the product space $\Theta^M$.  If $\Theta\subset\R^{d_\theta}$ is contained in a Euclidean ball of radius $R$, then a standard volumetric argument gives
\begin{align*}
    \mathcal N(r,\Theta^M,d_\infty)
    \le \left(1+\frac{2R}{r}\right)^{d_\theta M}
    =\left(1+\frac{2R}{r}\right)^{d_\theta T/\eps}.
\end{align*}
Substituting this volumetric estimate into \eqref{eq:metric-entropy-error} gives the worst-case bound
\begin{align}
\label{eq:optimized-metric-entropy-bound}
    \sup_{\thetaeps\in\calA_\eps}
    \abs{\mcJ_{N,\eps}(\thetaeps)-\mcJ(\thetaeps)}
    \lesssim
    \eps+r+
    \sqrt{\frac{d_\theta T\log(1+2R/r)}{N\eps}}
    +\sqrt{\frac{\log(1/\delta)}{N}},
\end{align}
up to constants depending on $B_\ell,L_{\calA,T}$ and $C_T$.  Balancing the deterministic depth error, the net radius, and the entropy term by taking $r\asymp\eps$ and optimizing in $\eps$ gives
\begin{align*}
    \eps_\star\asymp r_\star
    \asymp \left(\frac{d_\theta T\log N}{N}\right)^{1/3},
\end{align*}
and therefore the optimized worst-case rate
\begin{align*}
    \sup_{\thetaeps\in\calA_{\eps_\star}}
    \abs{\mcJ_{N,\eps_\star}(\thetaeps)-\mcJ(\thetaeps)}
    \lesssim
    \left(\frac{d_\theta T\log N}{N}\right)^{1/3}
    +\sqrt{\frac{\log(1/\delta)}{N}}.
\end{align*}
For the net itself, the same calculation without the covering-radius term $2L_{\calA,T}r$ gives the same $N^{-1/3}$ balance after choosing $r$ of the same order as the final approximation radius.  If the admissible class imposes temporal regularity, parameter sharing, low-dimensional parameterization, or a bounded-variation constraint in depth, the covering number can be substantially smaller.  The estimate \eqref{eq:metric-entropy-error} is stated in terms of $\mathcal N(r,\calA_\eps,d_\infty)$ precisely to allow such structural improvements.
The sup metric over the full product $\Theta^M$ is intentionally conservative.  It ignores algorithmic bias, data-dependent localization, implicit regularization, and the fact that practical training explores a small subset of parameter space.  A sharper theory for trained Transformers would need a complexity notion adapted to the learned class or the training algorithm, for example localized Rademacher complexity, PAC-Bayesian bounds, compression, or algorithmic stability.
\end{remark}

\subsection{Comparison of minimizers and approximate training}

The uniform estimate \eqref{eq:finite-class-error} implies convergence of optimal values over controlled classes.  Let $\calA_\eps$ be a set of finite-depth controls and define
\begin{align*}
    V_{N,\eps}=\inf_{\thetaeps\in\calA_\eps}\mcJ_{N,\eps}(\thetaeps),
    \qquad
    V_{\eps}=\inf_{\thetaeps\in\calA_\eps}\mcJ(\thetaeps).
\end{align*}

\begin{theorem}[Optimal value comparison over a finite class]
\label{cor:value-comparison}
Under the assumptions of \Cref{thm:finite-class}, with probability at least $1-\delta$,
\begin{align}
\label{eq:value-comparison}
    \abs{V_{N,\eps}-V_\eps}
    \le C_T\eps+B_\ell\sqrt{\frac{\log(2\abs{\calA_\eps}/\delta)}{2N}}.
\end{align}
Moreover, if $\widehat\thetaeps$ is $\eta$-optimal for the empirical problem, namely
\begin{align*}
    \mcJ_{N,\eps}(\widehat\thetaeps)\le V_{N,\eps}+\eta,
\end{align*}
then
\begin{align}
\label{eq:approx-minimizer}
    \mcJ(\widehat\thetaeps)
    \le V_\eps+\eta+2\left(C_T\eps+B_\ell\sqrt{\frac{\log(2\abs{\calA_\eps}/\delta)}{2N}}\right).
\end{align}
\end{theorem}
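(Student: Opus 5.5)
The plan is to work entirely on the high-probability event $\Omega_\delta$ furnished by \Cref{thm:finite-class}. On this event the uniform deviation
\begin{align*}
    \Delta:=\sup_{\thetaeps\in\calA_\eps}\abs{\mcJ_{N,\eps}(\thetaeps)-\mcJ(\thetaeps)}
\end{align*}
satisfies $\Delta\le C_T\eps+B_\ell\sqrt{\log(2\abs{\calA_\eps}/\delta)/(2N)}$, and $\Prob(\Omega_\delta)\ge1-\delta$. Both claims then become deterministic consequences of the inequality $\abs{\mcJ_{N,\eps}-\mcJ}\le\Delta$ holding simultaneously on $\calA_\eps$. No further probabilistic argument is needed, and in particular no second union bound. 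This is essential, because the empirical $\eta$-minimizer $\widehat\thetaeps$ depends on the sample, so \Cref{thm:fixed-control} cannot be applied to it directly. Uniformity over the fixed class $\calA_\eps$ is exactly what lets us handle this data-dependent choice.

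For the value comparison \eqref{eq:value-comparison}, I will use the elementary fact that two functions on the same set which are uniformly within $\Delta$ of each other have infima within $\Delta$ of each other. The argument runs as follows. For every $\thetaeps\in\calA_\eps$ we have $V_{N,\eps}\le\mcJ_{N,\eps}(\thetaeps)\le\mcJ(\thetaeps)+\Delta$. Taking the infimum over $\thetaeps$ gives $V_{N,\eps}\le V_\eps+\Delta$. Exchanging the roles of $\mcJ_{N,\eps}$ and $\mcJ$ gives $V_\eps\le V_{N,\eps}+\Delta$. Both infima are finite, since the losses take values in $[0,B_\ell]$ by Assumption \ref{ass:readout}, and moreover $\calA_\eps$ is finite, so the infima are actually minima.

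For \eqref{eq:approx-minimizer}, since $\widehat\thetaeps\in\calA_\eps$, I will chain the inequalities
\begin{align*}
    \mcJ(\widehat\thetaeps)\le\mcJ_{N,\eps}(\widehat\thetaeps)+\Delta\le V_{N,\eps}+\eta+\Delta\le V_\eps+\eta+2\Delta.
\end{align*}
The first inequality is the uniform bound at $\widehat\thetaeps$, the second is $\eta$-optimality, and the third is the value comparison just proved. Inserting the bound on $\Delta$ yields the claim.

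I do not expect a genuine analytic obstacle here. The only point requiring care is the one noted above: the bound must be invoked through the uniform event of \Cref{thm:finite-class} rather than pointwise, because $\widehat\thetaeps$ is data-dependent. One could sharpen the constant $2$ by applying a one-sided bound to a fixed population minimizer, but I will keep the symmetric version as stated.
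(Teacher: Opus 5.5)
Your proposal is correct and follows the paper's proof essentially step for step: you work on the single uniform event from \Cref{thm:finite-class}, bound the gap between the two infima by the uniform deviation, and chain the uniform bound, $\eta$-optimality, and the value comparison to obtain the factor $2$. Your observation that the uniform event is what legitimizes the data-dependent $\widehat\thetaeps$ is exactly the right point to stress; the paper uses it without saying so explicitly.
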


\begin{proof}
The first claim follows from
\begin{align*}
    \abs{\inf_{\thetaeps}\mcJ_{N,\eps}(\thetaeps)-\inf_{\thetaeps}\mcJ(\thetaeps)}
    \le \sup_{\thetaeps}\abs{\mcJ_{N,\eps}(\thetaeps)-\mcJ(\thetaeps)}
\end{align*}
and \Cref{thm:finite-class}.  For the second claim, on the same event,
\begin{align*}
    \mcJ(\widehat\thetaeps)
    &\le \mcJ_{N,\eps}(\widehat\thetaeps)+\Gamma_{N,\eps}
    \le V_{N,\eps}+\eta+\Gamma_{N,\eps}
    \le V_\eps+\eta+2\Gamma_{N,\eps},
\end{align*}
where $\Gamma_{N,\eps}$ denotes the right-hand side of \eqref{eq:value-comparison}.
\end{proof}

The preceding theorem compares empirical and population risks within a fixed finite-depth class.  We next relate such finite-depth classes to the full continuous-time admissible class through a deterministic density assumption.
\begin{theorem}[Approximation of the full continuous control problem]
\label{thm:full-control-approximation}
Let $\calU$ be a continuous-time admissible control class and let
\begin{align*}
    V=\inf_{\theta\in\calU}\mcJ(\theta).
\end{align*}
Assume that each element of $\calA_\eps$ is identified with a piecewise constant control in $\calU$, so that $V\le V_\eps$.  Suppose moreover that there exists a deterministic approximation error $\eta_\eps\downarrow0$ such that, for every $\theta\in\calU$ and every $\eta>0$, there is $\vartheta^\eps\in\calA_\eps$ satisfying
\begin{align}
\label{eq:control-density}
    \mcJ(\vartheta^\eps)\le \mcJ(\theta)+\eta_\eps+\eta.
\end{align}
Then
\begin{align}
\label{eq:continuous-value-approx}
    0\le V_\eps-V\le \eta_\eps.
\end{align}
If, in addition, $\calA_\eps$ is finite, then with probability at least $1-\delta$,
\begin{align}
\label{eq:empirical-to-full-value}
    \abs{V_{N,\eps}-V}
    \le \eta_\eps+C_T\eps+B_\ell\sqrt{\frac{\log(2\abs{\calA_\eps}/\delta)}{2N}}.
\end{align}
\end{theorem}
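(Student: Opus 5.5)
The plan is to prove \eqref{eq:continuous-value-approx} by elementary manipulation of infima, and then obtain \eqref{eq:empirical-to-full-value} from the triangle inequality together with \Cref{cor:value-comparison}.

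\emph{Lower bound.} Since each element of $\calA_\eps$ is identified with a piecewise constant control in $\calU$, the infimum defining $V_\eps$ is taken over a subset of $\calU$. Hence $V\le V_\eps$, which is the left inequality in \eqref{eq:continuous-value-approx}.

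\emph{Upper bound.} Fix $\theta\in\calU$ and $\eta>0$. The density hypothesis \eqref{eq:control-density} gives some $\vartheta^\eps\in\calA_\eps$ with $\mcJ(\vartheta^\eps)\le\mcJ(\theta)+\eta_\eps+\eta$. It follows that $V_\eps\le\mcJ(\theta)+\eta_\eps+\eta$. Taking the infimum over $\theta\in\calU$ yields $V_\eps\le V+\eta_\eps+\eta$, and letting $\eta\downarrow0$ gives $V_\eps-V\le\eta_\eps$. One technical point needs care: $V$ must be finite for the subtraction to make sense. This holds because $0\le\mcJ\le B_\ell$ under Assumption \ref{ass:readout}, at least for controls whose trajectories stay in $K$. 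I would state this implicitly as part of the admissibility of $\calU$.

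\emph{Empirical comparison.} When $\calA_\eps$ is finite, I split
\begin{align*}
    \abs{V_{N,\eps}-V}\le\abs{V_{N,\eps}-V_\eps}+\abs{V_\eps-V}.
\end{align*}
The second term is at most $\eta_\eps$ by the first part. The first term is controlled by \Cref{cor:value-comparison}: on an event of probability at least $1-\delta$, it is bounded by $C_T\eps+B_\ell\sqrt{\log(2\abs{\calA_\eps}/\delta)/(2N)}$. The bound from the first part is deterministic, so no further union bound is needed and the probability level $1-\delta$ is unchanged.

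There is no real obstacle in this argument. The only delicate point is bookkeeping: the auxiliary slack $\eta$ must be removed before the probabilistic step, and the finiteness of $V$ should be noted explicitly.
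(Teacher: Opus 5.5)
Your proposal is correct and follows the paper's proof: the inclusion $\calA_\eps\subset\calU$ gives $V\le V_\eps$, the density hypothesis followed by letting the slack $\eta\downarrow0$ gives $V_\eps-V\le\eta_\eps$, and the triangle inequality combined with \Cref{cor:value-comparison} gives \eqref{eq:empirical-to-full-value}. The only difference is cosmetic: you take the infimum over all $\theta\in\calU$ directly, whereas the paper first picks an $\eta$-optimal $\theta^\eta$ and carries slack $2\eta$, which amounts to the same argument.
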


\begin{proof}
Since $\calA_\eps\subset\calU$, we have $V\le V_\eps$.  To prove the reverse inequality up to $\eta_\eps$, fix $\eta>0$ and choose $\theta^\eta\in\calU$ such that
\begin{align*}
    \mcJ(\theta^\eta)\le V+\eta.
\end{align*}
By the approximation assumption \eqref{eq:control-density}, there exists $\vartheta^\eps\in\calA_\eps$ such that
\begin{align*}
    \mcJ(\vartheta^\eps)
    \le \mcJ(\theta^\eta)+\eta_\eps+\eta
    \le V+\eta_\eps+2\eta.
\end{align*}
Taking the infimum over $\calA_\eps$ gives $V_\eps\le V+\eta_\eps+2\eta$.  Letting $\eta\downarrow0$ proves \eqref{eq:continuous-value-approx}.  Finally, \Cref{cor:value-comparison} gives
\begin{align*}
    \abs{V_{N,\eps}-V}
    \le \abs{V_{N,\eps}-V_\eps}+\abs{V_\eps-V}
    \le C_T\eps+B_\ell\sqrt{\frac{\log(2\abs{\calA_\eps}/\delta)}{2N}}+\eta_\eps,
\end{align*}
which proves \eqref{eq:empirical-to-full-value}.
\end{proof}

\begin{remark}[On the density term]
The quantity $\eta_\eps$ is independent of the Euler error in \Cref{thm:pathwise} and of the sampling error in \Cref{thm:fixed-control}.  It is an approximation error of the admissible control class: it measures how well the finite-depth class $\calA_\eps$ can reproduce the best objective value available in the continuous-time class $\calU$.  Thus three different mechanisms appear in \eqref{eq:empirical-to-full-value}: $C_T\eps$ comes from replacing the continuous state equation by an Euler residual network, the $N^{-1/2}$ term comes from empirical sampling, and $\eta_\eps$ comes from restricting the controls to a finite-depth or finite-dimensional subclass.

For example, suppose that $\calU$ consists of Lipschitz controls $\theta:[0,T]\to\Theta$, with Lipschitz constant bounded by $L_{\calU}$, and that $F$ is Lipschitz in the control variable.  If $\calA_\eps$ contains the grid-sampled controls $\thetaeps_t=\theta(t_k)$ for $t\in[t_k,t_{k+1})$, then
\begin{align*}
    \int_0^T \norm{\theta_t-\thetaeps_t}\dd t\le C\eps.
\end{align*}
Let $X_t$ and $\widetilde X_t$ be the continuous flows driven by $\theta$ and by the sampled control $\thetaeps$, respectively, from the same initial input.  By Assumptions \ref{ass:field} and \ref{ass:theta-lip},
\begin{align*}
    \frac{\dd}{\dd t}\norm{X_t-\widetilde X_t}
    \le \zeta_+L_F\norm{X_t-\widetilde X_t}
    +\zeta_+L_\theta\norm{\theta_t-\thetaeps_t}.
\end{align*}
Gronwall's inequality gives
\begin{align*}
    \sup_{0\le t\le T}\norm{X_t-\widetilde X_t}
    \le C_T\int_0^T\norm{\theta_t-\thetaeps_t}\dd t
    \le C_T\eps.
\end{align*}
Applying the loss Lipschitz estimate \eqref{eq:ce-h-lip} at the terminal time and integrating over $\mu_X$ yields
\begin{align*}
    \abs{\mcJ(\thetaeps)-\mcJ(\theta)}\le C_T\eps.
\end{align*}
In this case one may take $\eta_\eps=O(\eps)$.  By contrast, if $\calU$ contains arbitrary measurable controls, pointwise grid sampling is not stable and no deterministic rate for $\eta_\eps$ follows without additional compactness, temporal regularity, relaxation, or a prescribed approximation scheme.
\end{remark}

\section{Continuous-depth mean field control problem}
\label{sec:continuous-depth-control}
This section formulates the limiting population control problem.  The first part defines the transport equation and the Lagrangian objective, and the second derives the Pontryagin condition. 

\subsection{Mean-field transport formulation and Lagrangian objective}

The continuous-depth population problem is described by the law of the hidden state alone.  Let $\Phi_t^\theta$ be the flow map generated by
\begin{align}\label{def:flowmap}  
    \dot X_t=\zeta(t)F(X_t;\theta_t),\qquad X_0=x,
\end{align}
and define
\begin{align}\label{def:density-t}  
    \rho_t^\theta=\calL(X_t^\theta)=(\Phi_t^\theta)_\#\mu_X\in\calP(\R^{L\times d_x}).
\end{align}
Then $\rho_t^\theta$ solves the first-order transport equation
\begin{equation}
\label{eq:transport}   \partial_t\rho_t^\theta+\nabla_X\cdot\bigl(\zeta(t)F(X;\theta_t)\rho_t^\theta\bigr)=0,
    \qquad \rho_0^\theta=\mu_X.
\end{equation}
The labels are not part of the transported state; they are evaluated from the initial input through the deterministic map $X_0\mapsto Y(X_0)$.  Therefore the limiting first-order mean field control problem is written in Lagrangian form as
\begin{equation}
\label{eq:mf-control}
    \inf_{\theta\in\calU}\mcJ(\theta)
    =\inf_{\theta\in\calU}\int_{\R^{L\times d_x}} \ell\bigl(Y(X_0),H(\Phi_T^\theta(X_0))\bigr)\,\mu_X(\dd X_0),
\end{equation}
where $\calU$ may be $L^\infty([0,T];\Theta)$ when one only studies admissible measurable controls, or a compact subclass of it when existence of minimizers is needed, such as an equicontinuous family of controls, a uniformly bounded Lipschitz class, or a bounded-variation admissible class with a topology compatible with compactness; also see \Cref{thm:existence-global-minimizer} and Remark \ref{rem:compact}.  The word ``mean field'' refers to the fact that the state of the control problem is the population law $\rho_t^\theta$ of hidden representations, not a single trajectory.  Since the control is common to all samples, this is a deterministic first-order mean field type control problem rather than a mean field game.

\subsection{Pontryagin first-order condition}

We state the Pontryagin condition in a precise local model.  Let $\Theta\subset\R^{d_\theta}$ be compact and convex, and let the admissible controls be a subset $\calU\subset L^\infty([0,T];\Theta)$.  Local minimality is understood with respect to a chosen topology on $\calU$, for instance the strong $L^2([0,T];\R^{d_\theta})$ topology on a local Hilbert chart or the strong $L^1$ topology when only measurable controls are used.  Since $\Theta$ may have a boundary, the generic necessary condition is a variational inequality; the interior stationarity equation is only the special case in which $\theta_t^\ast$ lies in the interior of $\Theta$ for almost every $t$.

\begin{assumption}[Differentiability for the Pontryagin condition]
\label{ass:pmp-differentiability}
Let $K_{\rm in}:=\operatorname{supp}\mu_X$ be the input support and
\begin{align*}
    K_{\calU}:=\overline{\{\Phi_t^\theta(x):x\in K_{\rm in},\ t\in[0,T],\ \theta\in\calU\}}\subset\R^{L\times d_x}
\end{align*}
be the compact reachable state set generated by the admissible controls under consideration.  The vector field $F$ is continuously differentiable in both $X$ and $\theta$ on $K_{\calU}\times\Theta$, the readout $H$ is continuously differentiable on $K_{\calU}$, and the admissible parameter set $\Theta$ is convex.
Moreover, $D_XF$, $D_\theta F$, $DH$, and the terminal loss gradient along $H(K_{\calU})$ are uniformly bounded.  These uniform bounds, together with compactness of $K_{\calU}$ and $\Theta$, are the hypotheses used below to justify first variations and differentiation under the expectation.
\end{assumption}

For a data input $X_0$ with associated deterministic label $Y(X_0)$, define the state equation
\begin{align}
\label{eq:pmp-state}
    \dot X_t=\zeta(t)F(X_t;\theta_t),
    \qquad X_0=X_0.
\end{align}
The adjoint equation is
\begin{align}
\label{eq:pmp-adjoint}
    -\dot P_t=\zeta(t)D_XF(X_t;\theta_t)^\top P_t,
    \qquad
    P_T=D H(X_T)^\top\nabla_Z\ell(Y(X_0),H(X_T)).
\end{align}
Using \eqref{eq:ce-gradient}, the terminal adjoint is explicitly
\begin{align}
\label{eq:pmp-terminal}
    P_T=D H(X_T)^\top\left[\frac1L\bigl(\softmax(H(X_T))-Y(X_0)\bigr)\right],
\end{align}
where the softmax is applied row-wise.

Define the Hamiltonian
\begin{align}
\label{eq:pmp-hamiltonian}
    \mathcal H(t,X,P,\theta)
    :=\zeta(t)\inner{P}{F(X;\theta)}.
\end{align}
For a candidate optimizer $\theta^\ast$, define the averaged Hamiltonian along the optimal state-adjoint characteristics by
\begin{align}
\label{eq:averaged-hamiltonian}
    \overline{\mathcal H}_{\theta^\ast}(t,\theta)
    :=\E\bigl[\mathcal H(t,X_t^\ast,P_t^\ast,\theta)\bigr].
\end{align}
The expectation is over $X_0\sim\mu_X$; equivalently, $X_t^\ast=X_t^\ast(X_0)$ and $P_t^\ast=P_t^\ast(X_0)$ are random variables induced by the characteristic starting from $X_0$.

\begin{proposition}[Pontryagin variational inequality]
\label{prop:pmp}
Let $\theta^\ast\in\calU$ be a local minimizer of \eqref{eq:mf-control} in the chosen local topology, and suppose that Assumption \ref{ass:pmp-differentiability} holds.  Let $(X_t^\ast,P_t^\ast)$ solve \eqref{eq:pmp-state}--\eqref{eq:pmp-adjoint} under $\theta^\ast$.  Then, for every admissible comparison control $\vartheta\in\calU$\footnote{Here $\vartheta=(\vartheta_t)_{t\in[0,T]}$ is an admissible comparison control, and $\vartheta_t$ denotes its value at depth time $t$.} such that
\begin{align*}
    \theta^\alpha:=\theta^\ast+\alpha(\vartheta-\theta^\ast)\in\calU
\end{align*}
for all sufficiently small $\alpha\in[0,1]$, one has
\begin{align}
\label{eq:pmp-vi}
    \int_0^T
    \left\langle\nabla_\theta\overline{\mathcal H}_{\theta^\ast}(t,\theta_t^\ast),
    \vartheta_t-\theta_t^\ast\right\rangle\dd t\ge 0.
\end{align}
Equivalently,
\begin{align}
\label{eq:pmp-vi-expanded}
    \int_0^T \zeta(t)\,\E\left[
    \inner{P_t^\ast}{D_\theta F(X_t^\ast;\theta_t^\ast)(\vartheta_t-\theta_t^\ast)}
    \right]\dd t\ge 0.
\end{align}
If $\calU=L^\infty([0,T];\Theta)$ and needle variations with arbitrary values in $\Theta$ are admissible, then the variational inequality localizes to the averaged Hamiltonian condition
\begin{align}
\label{eq:pmp-averaged-hamiltonian-min}
    \overline{\mathcal H}_{\theta^\ast}(t,\theta_t^\ast)
    \le \overline{\mathcal H}_{\theta^\ast}(t,\theta)
    \qquad\text{for every }\theta\in\Theta,\quad\text{for a.e. }t\in[0,T].
\end{align}
If, in addition, $\theta^\ast_t$ is an interior point of $\Theta$ for almost every $t$, then
\begin{align}
\label{eq:pmp-stationarity}
    \nabla_\theta\overline{\mathcal H}_{\theta^\ast}(t,\theta_t^\ast)
    =\zeta(t)\E\left[D_\theta F(X_t^\ast;\theta_t^\ast)^\top P_t^\ast\right]=0
    \qquad \text{for a.e. }t\in[0,T]\text{ with }\zeta(t)>0.
\end{align}
Equivalently, the interior stationarity condition can be written as
\begin{align}
\label{eq:pmp-law-stationarity}
    \int D_\theta F(X_t^\ast(x);\theta_t^\ast)^\top P_t^\ast(x)\,\mu_X(\dd x)=0.
\end{align}
\end{proposition}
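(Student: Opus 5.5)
The approach is the standard first-variation argument for a Lagrangian terminal cost, carried out characteristic by characteristic and then averaged over $\mu_X$. Fix an admissible $\vartheta$ with $\theta^\alpha=\theta^\ast+\alpha(\vartheta-\theta^\ast)\in\calU$ for small $\alpha$. Convexity of $\Theta$ keeps $\theta^\alpha_t\in\Theta$. Assumption \ref{ass:pmp-differentiability} keeps all trajectories in $K_{\calU}$. For fixed $X_0$, the map $\alpha\mapsto X_t^{\theta^\alpha}(X_0)$ is differentiable at $\alpha=0$. Its derivative $V_t$ solves the linearized equation
\begin{align*}
    \dot V_t=\zeta(t)\bigl[D_XF(X_t^\ast;\theta_t^\ast)V_t+D_\theta F(X_t^\ast;\theta_t^\ast)(\vartheta_t-\theta_t^\ast)\bigr],\qquad V_0=0.
\end{align*}
I would justify this by writing the difference quotient $(X_t^{\theta^\alpha}-X_t^\ast)/\alpha$ and subtracting the linearized equation. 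Uniform continuity of $D_XF,D_\theta F$ on the compact set $K_{\calU}\times\Theta$, Gronwall's inequality, and the a priori bound $\norm{X^{\theta^\alpha}-X^\ast}\le C\alpha$ then show that the remainder is $o(1)$ uniformly in $X_0$ and $t$. The a priori bound follows as in \Cref{lem:risk-lip-control}, using boundedness of $D_\theta F$.

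Next I would use the adjoint \eqref{eq:pmp-adjoint} to remove $V$. Differentiating $\inner{P_t^\ast}{V_t}$ in $t$, the $D_XF$ terms cancel. This gives the duality identity
\begin{align*}
    \inner{P_T^\ast}{V_T}=\int_0^T\zeta(t)\inner{P_t^\ast}{D_\theta F(X_t^\ast;\theta_t^\ast)(\vartheta_t-\theta_t^\ast)}\dd t.
\end{align*}
By the chain rule and the terminal condition \eqref{eq:pmp-terminal}, the left side equals $\frac{\dd}{\dd\alpha}\big|_{\alpha=0}\ell(Y(X_0),H(X_T^{\theta^\alpha}))$. The difference quotients of the loss are bounded uniformly in $X_0$ by the bounds on $DH$, the loss gradient and the Lipschitz estimate. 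Dominated convergence therefore gives
\begin{align*}
    \frac{\dd}{\dd\alpha}\Big|_{\alpha=0^+}\mcJ(\theta^\alpha)=\int_0^T\zeta(t)\,\E\bigl[\inner{P_t^\ast}{D_\theta F(X_t^\ast;\theta_t^\ast)(\vartheta_t-\theta_t^\ast)}\bigr]\dd t,
\end{align*}
where Fubini is legitimate because the integrand is bounded. The rate in the second step needs more care: $\theta^\alpha\to\theta^\ast$ in $L^\infty$, hence in $L^1$ and $L^2$, so $\theta^\alpha$ eventually lies in any neighbourhood for the chosen local topology. Local minimality then gives $\mcJ(\theta^\alpha)\ge\mcJ(\theta^\ast)$, so the one-sided derivative is $\ge0$. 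This is \eqref{eq:pmp-vi-expanded}. Since $\theta\mapsto\overline{\mathcal H}_{\theta^\ast}(t,\theta)$ is differentiable with gradient $\zeta(t)\E[D_\theta F^\top P_t^\ast]$, we obtain \eqref{eq:pmp-vi}.

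For the localized statements, interior stationarity is easy. If $\theta_t^\ast$ is interior a.e., take $\vartheta=\theta^\ast\pm\beta\mathbf 1_E v$ for a unit vector $v$, measurable $E$ and small $\beta$. Admissibility needs a uniform interior margin; I would first restrict to the sets $\{t:\operatorname{dist}(\theta^\ast_t,\partial\Theta)>1/n\}$ and then let $n\to\infty$. Both signs give equality, and a Lebesgue point argument yields $\zeta(t)\E[D_\theta F^\top P_t^\ast]=0$ a.e. Dividing by $\zeta(t)>0$ gives \eqref{eq:pmp-stationarity} and \eqref{eq:pmp-law-stationarity}.

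The main obstacle is \eqref{eq:pmp-averaged-hamiltonian-min}, because $\overline{\mathcal H}$ is linear in $P$ but not convex in $\theta$, so convex variations alone do not give a global minimum over $\Theta$. Here I would use needle variations instead: $\theta^{\tau,h}=\theta$ on $[\tau,\tau+h]$ and $\theta^\ast$ elsewhere, admissible in the strong $L^1$ topology since $\norm{\theta^{\tau,h}-\theta^\ast}_{L^1}\le Ch$. The steps are as follows. First, the perturbation of each characteristic at time $\tau+h$ is $h\zeta(\tau)[F(X_\tau^\ast;\theta)-F(X_\tau^\ast;\theta_\tau^\ast)]+o(h)$ at Lebesgue points $\tau$, uniformly in $X_0$ by compactness. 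Second, this perturbation is propagated linearly to time $T$ and paired with $P^\ast$ via the same duality identity. Third, the resulting first-order change is averaged over $\mu_X$, giving $h[\overline{\mathcal H}_{\theta^\ast}(\tau,\theta)-\overline{\mathcal H}_{\theta^\ast}(\tau,\theta^\ast_\tau)]+o(h)\ge0$. Finally, to make "a.e. $\tau$" independent of $\theta$, I would take a countable dense subset of $\Theta$ and use continuity of $\overline{\mathcal H}$ in $\theta$.
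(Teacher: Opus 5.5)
Your proposal is correct and follows essentially the same route as the paper: the first variation of each characteristic together with the adjoint duality identity and dominated convergence gives the variational inequality, localized signed perturbations $\theta^\ast\pm\alpha\xi\mathbf 1_E$ give interior stationarity, and needle variations plus a countable dense subset of $\Theta$ give the averaged Hamiltonian condition. Two of your details tighten points the paper treats quickly: restricting to the sets $\{t:\operatorname{dist}(\theta^\ast_t,\partial\Theta)>1/n\}$ supplies the uniform interior margin that the paper's ``$\operatorname{dist}>0$'' support condition does not, and you derive the needle expansion explicitly where the paper only cites it as standard.
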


\begin{proof}
Let $\theta^\alpha=\theta^\ast+\alpha(\vartheta-\theta^\ast)$ and let $X_t^\alpha$ be the corresponding state.  Since $F$, $D_XF$, and $D_\theta F$ are uniformly bounded on the compact set $K_{\calU}\times\Theta$, the standard continuous-dependence and variational equations for ODEs imply that the directional derivative
\begin{align*}
    \dot X_t^0=\left.\frac{d}{d\alpha}X_t^\alpha\right|_{\alpha=0}
\end{align*}
exists uniformly for $X_0\in K_{\rm in}$ and solves
\begin{align*}
    \frac{d}{dt}\dot X_t^0
    =\zeta(t)D_XF(X_t^\ast;\theta_t^\ast)\dot X_t^0
    +\zeta(t)D_\theta F(X_t^\ast;\theta_t^\ast)(\vartheta_t-\theta_t^\ast),
    \qquad \dot X_0^0=0.
\end{align*}
The same compactness assumptions imply a uniform bound on $\dot X_t^0$ proportional to $\norm{\vartheta-\theta^\ast}_{L^1}$ or $\norm{\vartheta-\theta^\ast}_{L^2}$ on the local chart.  Since $DH$ and $\nabla_Z\ell$ are uniformly bounded on the relevant compact logit set, dominated convergence justifies differentiating the terminal cost under the expectation:
\begin{align*}
    \left.\frac{d}{d\alpha}\mcJ(\theta^\alpha)\right|_{\alpha=0}
    =\E\left[\inner{P_T^\ast}{\dot X_T^0}\right].
\end{align*}
Using \eqref{eq:pmp-adjoint} and the first-variation equation, one has
\begin{align*}
    \frac{\dd}{\dd t}\inner{P_t^\ast}{\dot X_t^0}
    &=\inner{\dot P_t^\ast}{\dot X_t^0}+\inner{P_t^\ast}{\frac{\dd}{\dd t}\dot X_t^0}\\
    &=-\zeta(t)\inner{D_XF(X_t^\ast;\theta_t^\ast)^\top P_t^\ast}{\dot X_t^0}
      +\zeta(t)\inner{P_t^\ast}{D_XF(X_t^\ast;\theta_t^\ast)\dot X_t^0}\\
    &\quad +\zeta(t)\inner{P_t^\ast}{D_\theta F(X_t^\ast;\theta_t^\ast)(\vartheta_t-\theta_t^\ast)}\\
    &=\zeta(t)\inner{P_t^\ast}{D_\theta F(X_t^\ast;\theta_t^\ast)(\vartheta_t-\theta_t^\ast)}.
\end{align*}
Since $\dot X_0^0=0$, integrating from $0$ to $T$ and then taking expectation over $X_0\sim\mu_X$ gives
\begin{align*}
    \E\left[\inner{P_T^\ast}{\dot X_T^0}\right]
    =\int_0^T\zeta(t)\E\left[
    \inner{P_t^\ast}{D_\theta F(X_t^\ast;\theta_t^\ast)(\vartheta_t-\theta_t^\ast)}
    \right]\dd t.
\end{align*}
Since $\theta^\ast$ is a local minimizer and $\theta^\alpha$ is feasible for $\alpha\ge0$ small, this derivative is nonnegative, proving \eqref{eq:pmp-vi}--\eqref{eq:pmp-vi-expanded}.

It remains to justify the two localized consequences.  Suppose first that $\calU=L^\infty([0,T];\Theta)$.  Fix $\widehat\theta\in\Theta$, a Lebesgue point $t_0$ of the functions involved, and a small interval
\begin{align*}
    I_h(t_0)=[t_0,t_0+h]\cap[0,T].
\end{align*}
Define the needle control
\begin{align*}
    \theta_t^{h,\widehat\theta}=\begin{cases}
    \widehat\theta, & t\in I_h(t_0),\\
    \theta_t^\ast, & t\notin I_h(t_0).
    \end{cases}
\end{align*}
The standard needle-expansion of the terminal variation gives
\begin{align}
\label{eq:needle-expansion-hamiltonian}
    \mcJ(\theta^{h,\widehat\theta})-\mcJ(\theta^\ast)
    =\int_{I_h(t_0)}
    \left[
    \overline{\mathcal H}_{\theta^\ast}(t,\widehat\theta)
    -\overline{\mathcal H}_{\theta^\ast}(t,\theta_t^\ast)
    \right]\dd t+o(|I_h(t_0)|),
\end{align}
where the remainder is uniform for $\widehat\theta$ in compact subsets of $\Theta$.  Since $\theta^\ast$ is locally optimal, the left-hand side is nonnegative for all sufficiently small $h$.  Dividing \eqref{eq:needle-expansion-hamiltonian} by $|I_h(t_0)|$ and using the Lebesgue differentiation theorem yields
\begin{align*}
    \overline{\mathcal H}_{\theta^\ast}(t_0,\widehat\theta)
    -\overline{\mathcal H}_{\theta^\ast}(t_0,\theta_{t_0}^\ast)
    \ge 0
\end{align*}
for almost every $t_0$ and every fixed $\widehat\theta\in\Theta$.  A countability argument on a dense subset of $\Theta$, followed by continuity in $\theta$, gives \eqref{eq:pmp-averaged-hamiltonian-min} for all $\theta\in\Theta$ and almost every $t$.

If $\theta_t^\ast$ lies in the interior of $\Theta$ for almost every $t$, then localized signed perturbations are feasible.  For any $\eta\in L^\infty([0,T];\R^{d_\theta})$ with support contained in times where $\operatorname{dist}(\theta_t^\ast,\partial\Theta)>0$, the controls $\theta^\ast\pm\alpha\eta$ are admissible for all sufficiently small $\alpha>0$.  Applying \eqref{eq:pmp-vi} to both signs gives
\begin{align*}
    \int_0^T
    \left\langle
    \nabla_\theta\overline{\mathcal H}_{\theta^\ast}(t,\theta_t^\ast),
    \eta_t
    \right\rangle\dd t=0.
\end{align*}
Taking $\eta_t=\xi\mathbf 1_E(t)$ for an arbitrary vector $\xi\in\R^{d_\theta}$ and measurable set $E$, and then applying Lebesgue differentiation, gives
\begin{align*}
    \left\langle
    \nabla_\theta\overline{\mathcal H}_{\theta^\ast}(t,\theta_t^\ast),
    \xi
    \right\rangle=0
    \qquad\text{for every }\xi\in\R^{d_\theta}
\end{align*}
for almost every interior time $t$.  This is precisely \eqref{eq:pmp-stationarity}.

\end{proof}

\begin{remark}[Hamiltonian form and boundary points]
The Hamiltonian condition \eqref{eq:pmp-averaged-hamiltonian-min} is the boundary-aware statement.  The equation \eqref{eq:pmp-stationarity} should only be used on times at which the optimal control lies in the interior of $\Theta$.  The label is not part of the transported state or of the Hamiltonian; it enters only through the terminal adjoint \eqref{eq:pmp-terminal}, which contains the cross-entropy softmax residual $\softmax(H(X_T))-Y(X_0)$.
When the averaged Hamiltonian minimizer is unique, the feedback form of the optimal control is
\begin{align}
\label{eq:pmp-law-selector}
    \theta_t^\ast=\Theta^\ast(\lambda_t^\ast),\qquad
    \Theta^\ast(\lambda)\in\arg\min_{\theta\in\Theta}\int \zeta(t)\inner{p}{F(x;\theta)}\,\lambda(\dd x,\dd p),
\end{align}
where $\lambda_t^\ast:=\calL(X_t^\ast,P_t^\ast)$ is the joint law of the state and adjoint. Thus the optimal control at time $t$ depends on the population joint distribution $\calL(X_t^\ast,P_t^\ast)$, not on an individual particlewise pair $(X_t^\ast(x),P_t^\ast(x))$.
\end{remark}

\section{Conditional stability results}
\label{sec:conditional-stability-results}

This section is a conditional toolbox.  The results below do not assert that compactness, strict separation, quadratic growth, PL inequalities, or nondegenerate Hessians automatically hold for Transformer objectives.  They state what follows once such assumptions have been verified on a chosen admissible class or basin.  This separation is meant to distinguish the proved approximation estimates in \Cref{sec:finite-depth-approximation} from structural consequences that require additional hypotheses.

\subsection{Existence and compactness of continuous controls}
\label{sec:full-continuous-control}

This subsection records several structural facts about the full continuous-time control problem
\begin{align}
\label{eq:full-continuous-value}
    V=\inf_{\theta\in\calU}\mcJ(\theta),
\end{align}
where $\calU$ is a class of admissible measurable controls with values in $\Theta$.  The results are intentionally stated in an abstract metric-control form, since the Transformer vector field is generally nonconvex in the layer parameter.  Thus existence and stability of minimizers follow from compactness, continuity, and separation assumptions, rather than from convexity of the loss landscape.

Let $d_{\calU}$ be a metric on $\calU$.  Typical choices are the $L^1$-metric,
\begin{align*}
    d_{1}(\theta,\vartheta)=\int_0^T\norm{\theta_t-\vartheta_t}\dd t,
\end{align*}
or the uniform metric $d_\infty$ when controls are continuous.  The estimates below only require that $\mcJ$ be continuous with respect to the chosen metric and that finite-depth controls approximate continuous controls in that metric or directly in objective value.

\begin{theorem}[Existence of global minimizers]
\label{thm:existence-global-minimizer}
Assume that $(\calU,d_{\calU})$ is compact and that $\theta\mapsto\mcJ(\theta)$ is continuous on $\calU$.  Then the full continuous control problem \eqref{eq:full-continuous-value} admits at least one global minimizer.  That is, there exists $\theta^\ast\in\calU$ such that
\begin{align*}
    \mcJ(\theta^\ast)=V.
\end{align*}
If, in addition, $\mcJ$ is strictly separated at $\theta^\ast$, in the sense that
\begin{align}
\label{eq:strict-global-separation}
    \mcJ(\theta)>\mcJ(\theta^\ast)
    \qquad \text{for every }\theta\in\calU\setminus\{\theta^\ast\},
\end{align}
then the global minimizer is unique.
\end{theorem}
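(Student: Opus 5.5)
The plan is to apply the direct method of the calculus of variations in its simplest metric form, i.e.\ the Weierstrass extreme-value theorem on the compact metric space $(\calU,d_{\calU})$. First, observe that $V$ is finite: by Assumption \ref{ass:readout} the loss takes values in $[0,B_\ell]$, so $0\le\mcJ(\theta)\le B_\ell$ for every $\theta\in\calU$, and hence $V\in[0,B_\ell]$. (Even without this bound, continuity on a compact set forces boundedness, so this step is only a convenience.)

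Next, choose a minimizing sequence $\theta^n\in\calU$ with $\mcJ(\theta^n)\downarrow V$. Since $(\calU,d_{\calU})$ is a compact metric space, it is sequentially compact. Extract a subsequence $\theta^{n_j}\to\theta^\ast\in\calU$ in $d_{\calU}$. By continuity of $\mcJ$ with respect to $d_{\calU}$, we get $\mcJ(\theta^\ast)=\lim_j\mcJ(\theta^{n_j})=V$. Hence $\theta^\ast$ is a global minimizer. Only lower semicontinuity is actually needed here, which gives $\mcJ(\theta^\ast)\le\liminf_j\mcJ(\theta^{n_j})=V\le\mcJ(\theta^\ast)$. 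It is worth noting this in passing, since in concrete settings (e.g.\ $L^1$ or weak topologies on controls) lower semicontinuity is often what one can verify via the Lipschitz estimates of \Cref{lem:risk-lip-control}-type Gronwall arguments.

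For uniqueness, suppose $\tilde\theta\in\calU$ is another global minimizer, so $\mcJ(\tilde\theta)=V=\mcJ(\theta^\ast)$. If $\tilde\theta\neq\theta^\ast$, the strict separation \eqref{eq:strict-global-separation} gives $\mcJ(\tilde\theta)>\mcJ(\theta^\ast)$, which is a contradiction. Thus $\tilde\theta=\theta^\ast$, where equality is understood as equality of elements of $\calU$ (e.g.\ a.e.\ equality when $\calU$ is a space of $L^1$ classes).

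There is no genuine obstacle in the abstract statement, since everything is packaged into the compactness and continuity hypotheses. The only point requiring care is to state that compactness of a metric space implies sequential compactness, and that the identification of controls is the one induced by $d_{\calU}$, so that ``unique'' is meaningful. The real difficulty, verifying compactness and continuity for a concrete class such as equi-Lipschitz controls with the uniform metric via Arzel\`a--Ascoli and the Gronwall estimate of the preceding remark, lies outside this theorem and is deferred to Remark \ref{rem:compact}.
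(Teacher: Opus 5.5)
Your proposal is correct and follows the paper's proof: the paper invokes the Weierstrass extreme-value theorem directly on the compact metric space $(\calU,d_{\calU})$, and your minimizing-sequence argument via sequential compactness is just the standard proof of that theorem. Your uniqueness argument, which uses the strict separation condition to rule out a second minimizer with the same value, is identical to the paper's.
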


\begin{proof}
Since $(\calU,d_{\calU})$ is compact and $\mcJ$ is continuous, by the Weierstrass extreme-value theorem (see, for example, \citet[Theorem~4.16]{Rudin1976}), there exists $\theta^\ast\in\calU$ such that
\begin{align*}
    \mcJ(\theta^\ast)=\min_{\theta\in\calU}\mcJ(\theta)=V.
\end{align*}
This proves existence.  If \eqref{eq:strict-global-separation} holds and $\widetilde\theta$ is another global minimizer, then
\begin{align*}
    \mcJ(\widetilde\theta)=\mcJ(\theta^\ast).
\end{align*}
The strict separation property forces $\widetilde\theta=\theta^\ast$.  Hence the global minimizer is unique.
\end{proof}

\begin{remark}[Local strict separation]
If the strict separation condition \eqref{eq:strict-global-separation} is replaced by local strict separation, then one obtains only local uniqueness.  More precisely, if there exists $r>0$ such that
\begin{align*}
    \mcJ(\theta)>\mcJ(\theta^\ast)
    \qquad\text{for every }0<d_{\calU}(\theta,\theta^\ast)<r,
\end{align*}
then no other minimizer can lie in the punctured ball $B_r(\theta^\ast)\setminus\{\theta^\ast\}$.  However, this does not rule out other global minimizers outside $B_r(\theta^\ast)$.  Thus local strict separation is sufficient for stability of a chosen local basin, but it is not sufficient for global uniqueness.  This distinction is important for Transformer-type objectives, which are generally nonconvex and may have multiple disconnected basins.
\end{remark}

\begin{remark}[When compactness is reasonable]\label{rem:compact}
The compactness assumption on $\calU$ can be realized in several standard ways.  If $\Theta$ is compact and $\calU$ is restricted to an equicontinuous family of controls, for example controls with a common Lipschitz bound, then Arzela--Ascoli compactness gives compactness in the uniform topology; see \citet[Theorem~7.25]{Rudin1976}.  A similar compactness mechanism applies to bounded-variation controls.  If $\Theta$ is compact and the controls have uniformly bounded total variation, Helly's selection principle yields a subsequence converging pointwise at every continuity point of the limit and strongly in $L^1$ after passing to a subsequence, provided the limit is interpreted as a bounded-variation control.  Thus a bounded-variation admissible class can be compact in an $L^1$ or pointwise-a.e. topology compatible with this selection principle.  These restrictions are natural when one wants the depth profile $t\mapsto\theta_t$ to vary smoothly across layers.

For the larger class $L^\infty([0,T];\Theta)$, compactness is not expected in the strong $L^1$ or $L^\infty$ topology.  One may instead work in a weak-* topology or pass to relaxed controls, where a control is represented by a time-dependent probability measure on $\Theta$.  Such compactifications are common in control theory, but they require a corresponding continuity statement for the control-to-state and control-to-risk maps.  The present analysis does not rely on a specific compactification: whenever a chosen topology makes $\calU$ compact and $\theta\mapsto\mcJ(\theta)$ continuous, the existence proof in \Cref{thm:existence-global-minimizer} applies verbatim.  
\end{remark}

\subsection{Local quadratic growth and stability of local minimizers}

After establishing existence under compactness, we turn to the stability of selected minimizers.  The following definition records the local growth property that turns objective-value convergence into convergence of controls.
\begin{definition}[Isolated local minimizer and quadratic growth]
A control $\theta^\ast\in\calU$ is a local minimizer if there exists $r_0>0$ such that
\begin{align*}
    \mcJ(\theta^\ast)\le \mcJ(\theta)
    \qquad \text{whenever }d_{\calU}(\theta,\theta^\ast)<r_0.
\end{align*}
It satisfies a local quadratic-growth condition if there are constants $\lambda>0$ and $r_0>0$ such that
\begin{align}
\label{eq:quadratic-growth}
    \mcJ(\theta)\ge \mcJ(\theta^\ast)+\frac{\lambda}{2}d_{\calU}(\theta,\theta^\ast)^2
    \qquad \text{whenever }d_{\calU}(\theta,\theta^\ast)<r_0.
\end{align}
\end{definition}

\begin{definition}[Local Hilbert model for controls]\label{def:Hilbert-model}
We say that $\calU$ is locally modeled on a Hilbert space $\mathcal H_{\calU}$ near $\theta^\ast$ if there is a neighborhood $\mathcal O$ of $\theta^\ast$ such that every admissible control in $\mathcal O$ can be written as $\theta^\ast+h$ with $h\in\mathcal H_{\calU}$, and the control metric is induced by the Hilbert norm:
\begin{align*}
    d_{\calU}(\theta^\ast+h_1,\theta^\ast+h_2)=\norm{h_1-h_2}_{\calU}.
\end{align*}
For unconstrained controls one may take $\mathcal H_{\calU}=L^2([0,T];\R^{d_\theta})$ locally, after identifying a control $\theta$ near $\theta^\ast$ with its perturbation $h=\theta-\theta^\ast$.  In this case the local metric is $d_{\calU}(\theta^\ast+h_1,\theta^\ast+h_2)=\|h_1-h_2\|_{L^2}$.  
\end{definition}

\begin{theorem}[Sufficient condition for local quadratic growth]
\label{thm:local-qg-sufficient}
Suppose that $\calU$ is locally modeled on a Hilbert space $\mathcal H_{\calU}$ near $\theta^\ast$ in the sense of Definition \ref{def:Hilbert-model}. Suppose that $\theta^\ast$ is an interior local minimizer, that $\mcJ$ is twice Fr\'echet differentiable in a neighborhood of $\theta^\ast$, and that the second variation is locally coercive: there exist $\lambda_0>0$ and $r_0>0$ such that
\begin{align}
\label{eq:second-variation-coercive}
    D^2\mcJ(\theta)[h,h]
    \ge \lambda_0\norm{h}_{\calU}^2
\end{align}
for every $\theta\in B_{r_0}(\theta^\ast)$ and every admissible direction $h$.  Then $\theta^\ast$ satisfies the local quadratic-growth condition \eqref{eq:quadratic-growth} with any $\lambda<\lambda_0$, after possibly reducing $r_0$.
\end{theorem}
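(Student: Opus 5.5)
The plan is to run a second-order Taylor expansion with integral remainder along the segment from $\theta^\ast$ to $\theta=\theta^\ast+h$ in the Hilbert chart of Definition \ref{def:Hilbert-model}. First, I will shrink the radius to some $r_1\le r_0$ so that the ball $B_{r_1}(\theta^\ast)$ lies inside the chart neighborhood $\mathcal O$, inside the region where $\mcJ$ is twice Fr\'echet differentiable, and inside the region where $\theta^\ast$ is a local minimizer. Because $d_{\calU}$ is induced by $\norm{\cdot}_{\calU}$, this ball is convex in the chart. Hence for every $\theta=\theta^\ast+h$ with $\norm{h}_{\calU}<r_1$, the whole segment $\theta^\ast+sh$, $s\in[0,1]$, is admissible and stays in the ball. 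This is where the interiority hypothesis on $\theta^\ast$ is used: it ensures that $h$ and all its scalar multiples are admissible directions.

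Next I will show that the first variation vanishes, $D\mcJ(\theta^\ast)=0$. Since $\theta^\ast$ is an interior local minimizer, both $\theta^\ast\pm\alpha h$ are admissible for small $\alpha>0$, so $\alpha\mapsto\mcJ(\theta^\ast+\alpha h)$ has a local minimum at $\alpha=0$. Its derivative $D\mcJ(\theta^\ast)[h]$ must therefore be zero for every admissible direction $h$. This is the same two-sided argument used for \eqref{eq:pmp-stationarity} in Proposition \ref{prop:pmp}.

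Then I set $g(s)=\mcJ(\theta^\ast+sh)$. Twice Fr\'echet differentiability gives $g'(s)=D\mcJ(\theta^\ast+sh)[h]$ and $g''(s)=D^2\mcJ(\theta^\ast+sh)[h,h]$. Taylor's formula should take the form $g(1)=g(0)+g'(0)+\int_0^1(1-s)g''(s)\dd s$. By \eqref{eq:second-variation-coercive}, applied at each point $\theta^\ast+sh\in B_{r_0}(\theta^\ast)$, we have $g''(s)\ge\lambda_0\norm{h}_{\calU}^2$. Since $\int_0^1(1-s)\dd s=1/2$, this yields
\[
\mcJ(\theta)\ge\mcJ(\theta^\ast)+\tfrac{\lambda_0}{2}\norm{h}_{\calU}^2,
\]
which is even slightly stronger than needed, since it holds with $\lambda=\lambda_0$ and a fortiori for any $\lambda<\lambda_0$.

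The main obstacle is regularity. Twice Fr\'echet differentiable does not by itself make $s\mapsto g''(s)$ continuous or integrable, so the integral form of the remainder is not automatically justified. My first remedy is to avoid integrals. I would consider $\phi(s)=g(s)-g(0)-\frac{\lambda_0}{2}s^2\norm{h}^2$. Then $\phi(0)=\phi'(0)=0$, and $\phi'$ is differentiable with $\phi''\ge0$. By the mean value theorem $\phi'$ is nondecreasing, so $\phi'\ge0$ on $[0,1]$, and hence $\phi(1)\ge0$. This argument needs only that $g'$ be differentiable, which follows from twice Fr\'echet differentiability. If one instead prefers the Peano-remainder route (expanding only at $\theta^\ast$), the $o(\norm{h}^2)$ error forces one to lose an arbitrary margin and to shrink $r_0$. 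That is why the statement is phrased with $\lambda<\lambda_0$, and I would mention this as the alternative in the write-up.
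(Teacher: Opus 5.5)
Your proposal is correct and follows essentially the same route as the paper: the first variation vanishes at the interior local minimizer, and Taylor's formula with integral remainder along the segment $\theta^\ast+sh$, combined with the coercivity bound, gives $\mcJ(\theta)-\mcJ(\theta^\ast)\ge\frac{\lambda_0}{2}\norm{h}_{\calU}^2$, so passing to $\lambda<\lambda_0$ is only a hedge. Your argument with $\phi(s)=g(s)-g(0)-\frac{\lambda_0}{2}s^2\norm{h}_{\calU}^2$ is a valid refinement: it avoids needing integrability of $s\mapsto D^2\mcJ(\theta^\ast+sh)[h,h]$, which the paper's integral formula tacitly assumes.
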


\begin{proof}
Since $\theta^\ast$ is an interior local minimizer and $\mcJ$ is differentiable, the first-order condition gives $D\mcJ(\theta^\ast)=0$.  For $\theta$ close to $\theta^\ast$, write $h=\theta-\theta^\ast$.  Taylor's formula with integral remainder gives
\begin{align*}
    \mcJ(\theta)-\mcJ(\theta^\ast)
    =\int_0^1(1-s)D^2\mcJ(\theta^\ast+s h)[h,h]\dd s.
\end{align*}
If $\theta^\ast+s h\in B_{r_0}(\theta^\ast)$, the coercivity condition \eqref{eq:second-variation-coercive} yields
\begin{align*}
    \mcJ(\theta)-\mcJ(\theta^\ast)
    \ge \int_0^1(1-s)\lambda_0\norm{h}_{\calU}^2\dd s
    =\frac{\lambda_0}{2}d_{\calU}(\theta,\theta^\ast)^2.
\end{align*}
Thus \eqref{eq:quadratic-growth} holds, up to replacing $\lambda_0$ by any smaller constant if the ball is reduced.
\end{proof}

In the Transformer control problem, condition \eqref{eq:second-variation-coercive} is an additional nondegeneracy assumption, not an automatic consequence of cross-entropy training.  It may fail because of parameter symmetries, flat directions, saturation of the softmax, or non-identifiability of different controls producing the same terminal representation.

To see why it is not automatic, set
\begin{align*}
    G(\theta;X_0)=H(\Phi_T^\theta(X_0))\in\R^{L\times d_v}.
\end{align*}
The cross-entropy loss is convex in the logit variable $Z$, but the control-to-logit map $\theta\mapsto G(\theta;X_0)$ is nonlinear and generally nonconvex.  Convexity of $\ell(Y,Z)$ in $Z$ therefore does not imply convexity, much less uniform convexity, in the control $\theta$.  More precisely, for an admissible direction $h$, the second variation has the schematic form
\begin{align}
\label{eq:hessian-decomposition}
    D^2\mcJ(\theta)[h,h]
    &=\E\left[\left\langle D^2_Z\ell(Y,G(\theta))[DG(\theta)[h]],DG(\theta)[h]\right\rangle\right]  \notag\\
    &\quad +\E\left[\left\langle \nabla_Z\ell(Y,G(\theta)),D^2G(\theta)[h,h]\right\rangle\right].
\end{align}
The first term is the positive Gauss--Newton part.  It vanishes in every first-order null direction satisfying $DG(\theta)[h]=0$.  Such directions are common in overparameterized Transformer parameterizations.  In the concrete toy dimensions $d_x=L=d_v=d_k=2$, take one attention head and a linear readout.  Suppose, for example, that the value and output factors enter one terminal hidden coordinate only through a scalar product $ab$ inside the effective matrix product $W_VW_O$.  At $a=b=1$, the perturbation $(\delta a,\delta b)=(1,-1)$ leaves this product unchanged to first order, since
\begin{align*}
    \delta(ab)=b\,\delta a+a\,\delta b=1\cdot1+1\cdot(-1)=0.
\end{align*}
Consequently the induced first-order terminal-logit variation vanishes in this direction, i.e. $DG(\theta)[h]=0$, even though the underlying parameters have changed.  Similarly, if $H(X)=XH_1$ with $H_1=\begin{pmatrix}1&0\\0&0\end{pmatrix}$ and a perturbation changes only the second hidden coordinate at terminal time, then $D H(X_T)[\delta X_T]=0$, so no positive cross-entropy curvature is seen in that direction.  The same degeneracy also appears in standard Transformer symmetries: permuting identical heads, splitting one head into two equivalent heads with compensating output weights, or refactorizing a feed-forward map $W_1W_2$ without changing its product can change the parameter vector while leaving the realized input-output map unchanged to first order.  These directions are genuine null directions for the risk unless one quotients out the symmetry or imposes an identifiability condition.  To rule this out one needs an observability or identifiability restriction on the readout and on the reachable terminal variations, such as a lower bound of the form
\begin{align*}
    \E\norm{\Pi_\perp D H(X_T^\theta)D\Phi_T^\theta[h]}^2\ge c\norm{h}_{\calU}^2
\end{align*}
for admissible directions $h$, modulo the unavoidable constant-logit directions; also see \eqref{eq:observability-lower-bound}.

Even when $DG(\theta)[h]\ne0$, the softmax Hessian may be poorly conditioned.  For one row, if $p=\softmax(z)$, then
\begin{align*}
    D^2_z\ell(y,z)=\operatorname{diag}(p)-pp^\top,
\end{align*}
up to the factor $1/L$ in the token-averaged loss.  The matrix $\operatorname{diag}(p)-pp^\top$ annihilates the constant-logit direction $\mathbf 1$ since adding the same constant to every vocabulary logit does not change the softmax.  It is positive only on the quotient space modulo $\operatorname{span}\{\mathbf 1\}$.  In the binary case $d_v=2$ with $z=(a,0)$, one has $p_1=e^a/(e^a+1)$ and the nonzero curvature is proportional to $p_1(1-p_1)$, which tends to zero as $a\to\infty$.  In our toy Transformer dimensions $d_x=L=d_v=d_k=2$, this corresponds to a two-token model in which each row logit is nearly saturated, for example $H(X)_{l,:}=(a,0)$ with $a\gg1$.  Then even a visible perturbation of the terminal hidden state produces almost no cross-entropy curvature.  To prevent this loss of curvature, one may impose a non-saturation condition on the relevant logits, for instance $p_j\in[\alpha,1-\alpha]$ for the active vocabulary coordinates and some $\alpha>0$.  Equivalent mechanisms include logit clipping, temperature control, label smoothing, or regularization terms that keep logits and parameters in a bounded region.  Without such a condition, the smallest positive eigenvalue of $\operatorname{diag}(p)-pp^\top$ on the quotient space can be arbitrarily close to zero, and therefore the positive Gauss--Newton part in \eqref{eq:hessian-decomposition} cannot yield a uniform lower bound.

The preceding discussion identifies the two main obstructions to coercivity: invisible logit directions and softmax saturation.  The next theorem states a concrete sufficient condition excluding these obstructions through observable logit curvature.
\begin{theorem}[Observable logit curvature implies local coercivity]
\label{thm:observable-logit-coercivity}
Let $G(\theta;X_0)=H(\Phi_T^\theta(X_0))$ be twice Fr\'echet differentiable in a neighborhood $\mathcal O$ of $\theta^\ast$.  Assume that the following three conditions hold on $\mathcal O$.
\begin{enumerate}
    \item There is $C_{\rm rem}\ge0$ such that, for every admissible direction $h$,
    \begin{align}
    \label{eq:second-order-remainder-bound}
        \left|\E\left[\left\langle \nabla_Z\ell(Y(X_0),G(\theta;X_0)),D^2G(\theta;X_0)[h,h]\right\rangle\right]\right|
        \le C_{\rm rem}\norm{h}_{\calU}^2.
    \end{align}
    \item There is $\kappa>0$ such that, for every relevant logit $Z$ and every perturbation $U\in\R^{L\times d_v}$,
    \begin{align}
    \label{eq:softmax-hessian-lower}
        \left\langle D_Z^2\ell(Y,Z)U,U\right\rangle
        \ge \kappa\norm{\Pi_{\perp}U}^2,
    \end{align}
    where $\Pi_{\perp}$ denotes row-wise projection onto the orthogonal complement of the constant-logit direction.  Explicitly, for $U\in\R^{L\times d_v}$,
    \begin{align*}
        (\Pi_{\perp}U)_{l,:}=U_{l,:}-\frac1{d_v}\left(\sum_{j=1}^{d_v}U_{l,j}\right)\mathbf 1_{d_v},
        \qquad l=0,\ldots,L-1.
    \end{align*}
    This removes the row-wise additive logit shift, which is invisible to the softmax.
    \item There is $c_{\rm obs}>0$ such that
    \begin{align}
    \label{eq:observability-lower-bound}
        \E_{X_0\sim\mu_X}\norm{\Pi_{\perp}DG(\theta;X_0)[h]}^2
        \ge c_{\rm obs}\norm{h}_{\calU}^2
    \end{align}
    for all admissible directions $h$.
\end{enumerate}
If $\kappa c_{\rm obs}>C_{\rm rem}$, then
\begin{align*}
    D^2\mcJ(\theta)[h,h]\ge (\kappa c_{\rm obs}-C_{\rm rem})\norm{h}_{\calU}^2
\end{align*}
for every $\theta\in\mathcal O$ and every admissible direction $h$.  In particular, the local coercivity condition \eqref{eq:second-variation-coercive} holds on a possibly smaller neighborhood of $\theta^\ast$ with any $\lambda_0<\kappa c_{\rm obs}-C_{\rm rem}$.
\end{theorem}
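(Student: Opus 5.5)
The plan is to start from the second-variation decomposition \eqref{eq:hessian-decomposition}. Fix $\theta\in\mathcal O$ and an admissible direction $h$, and write $G=G(\theta;X_0)$ and $U(X_0):=DG(\theta;X_0)[h]$. Since $\mcJ(\theta)=\E[\ell(Y(X_0),G(\theta;X_0))]$, the chain rule for twice Fr\'echet differentiable maps gives
\begin{align*}
    D^2\mcJ(\theta)[h,h]
    =\E\left[\left\langle D_Z^2\ell(Y,G)U,U\right\rangle\right]
    +\E\left[\left\langle \nabla_Z\ell(Y,G),D^2G(\theta;X_0)[h,h]\right\rangle\right].
\end{align*}
This requires differentiating twice under the expectation. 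I would justify that step with the same compactness and uniform-boundedness reasoning as in the proof of \Cref{prop:pmp}: the gradient $\nabla_Z\ell$ is bounded by \Cref{lem:ce-lip}, and $D_Z^2\ell$ is bounded row-wise by $\operatorname{diag}(p)-pp^\top$. Dominated convergence then applies. This justification is the only genuinely delicate point. If it is not available outright, I would note that the theorem implicitly assumes $\mcJ$ inherits twice differentiability from $G$, which is also the setting of \Cref{thm:local-qg-sufficient}.

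For the Gauss--Newton term, I apply \eqref{eq:softmax-hessian-lower} pointwise with $Z=G(\theta;X_0)$, which is a relevant logit, and $U=U(X_0)$. This gives $\langle D_Z^2\ell\, U,U\rangle\ge\kappa\norm{\Pi_\perp U}^2$. Taking expectations and using the observability bound \eqref{eq:observability-lower-bound}, the first term is at least $\kappa c_{\rm obs}\norm{h}_{\calU}^2$.

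For the curvature term of the control-to-logit map, the remainder bound \eqref{eq:second-order-remainder-bound} gives the lower bound $-C_{\rm rem}\norm{h}_{\calU}^2$. Adding the two estimates yields $D^2\mcJ(\theta)[h,h]\ge(\kappa c_{\rm obs}-C_{\rm rem})\norm{h}_{\calU}^2$, and this holds uniformly over $\theta\in\mathcal O$ and over admissible $h$.

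For the final claim, I choose $r_0>0$ with $B_{r_0}(\theta^\ast)\subset\mathcal O$. On this ball, \eqref{eq:second-variation-coercive} holds with $\lambda_0=\kappa c_{\rm obs}-C_{\rm rem}$, so it also holds with any smaller positive $\lambda_0$. This puts us in the hypotheses of \Cref{thm:local-qg-sufficient}.
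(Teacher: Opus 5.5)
Your proposal is correct and follows the paper's proof essentially step for step: the chain-rule decomposition \eqref{eq:hessian-decomposition}; the pointwise softmax-Hessian bound combined with observability, which gives $\kappa c_{\rm obs}\norm{h}_{\calU}^2$ from the Gauss--Newton term; the remainder hypothesis for the $-C_{\rm rem}\norm{h}_{\calU}^2$ term; and shrinking to a ball inside $\mathcal O$. The paper simply invokes the chain rule without justifying differentiation under the expectation, so your extra remark refines the paper's argument rather than departing from it, though dominated convergence would also need bounds on $DG(\theta;X_0)[h]$ and $D^2G(\theta;X_0)[h,h]$ that are uniform in $X_0$, not only bounds on $\nabla_Z\ell$ and $D_Z^2\ell$.
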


\begin{proof}
Using the chain rule for $\mcJ(\theta)=\E[\ell(Y(X_0),G(\theta;X_0))]$ gives the decomposition \eqref{eq:hessian-decomposition}.  By \eqref{eq:softmax-hessian-lower}, the Gauss--Newton term is bounded from below by
\begin{align*}
    \kappa\,\E\norm{\Pi_{\perp}DG(\theta;X_0)[h]}^2.
\end{align*}
The observability estimate \eqref{eq:observability-lower-bound} further bounds this by $\kappa c_{\rm obs}\norm{h}_{\calU}^2$.  The remaining second-order term is bounded below by $-C_{\rm rem}\norm{h}_{\calU}^2$ by \eqref{eq:second-order-remainder-bound}.  Combining these estimates yields the claimed lower bound.  Reducing the neighborhood if necessary gives the stated local coercivity condition.
\end{proof}

When observability alone is not available, explicit regularization can supply missing curvature on a restricted Hilbert chart.  The following theorem records this standard mechanism in the notation of the control problem.
\begin{theorem}[Regularization-enforced local quadratic growth]
\label{thm:regularization-coercivity}
Assume that $\calU$ is locally modeled on a Hilbert space and that the unregularized risk satisfies the local lower Hessian bound
\begin{align}
\label{eq:negative-curvature-bound}
    D^2\mcJ(\theta)[h,h]\ge -\beta\norm{h}_{\calU}^2
\end{align}
for all $\theta$ in a neighborhood of $\theta^\ast$ and all admissible directions $h$.  Define the regularized objective
\begin{align*}
    \mcJ_\gamma(\theta)=\mcJ(\theta)+\frac{\gamma}{2}\norm{\theta}_{\calU}^2,
\end{align*}
where $\gamma>\beta$.  Then
\begin{align*}
    D^2\mcJ_\gamma(\theta)[h,h]\ge (\gamma-\beta)\norm{h}_{\calU}^2.
\end{align*}
Consequently, every interior local minimizer of $\mcJ_\gamma$ satisfies a local quadratic-growth condition.
\end{theorem}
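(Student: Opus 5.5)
The plan is to compute the second variation of the quadratic penalty explicitly, add it to the assumed lower bound \eqref{eq:negative-curvature-bound}, and then obtain quadratic growth from \Cref{thm:local-qg-sufficient}.

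\textbf{Hessian of the penalty.} In the local Hilbert model of Definition \ref{def:Hilbert-model}, write $\theta=\theta^\ast+h$. Then $\norm{\theta}_{\calU}^2$ is a continuous quadratic form on $\mathcal H_{\calU}$, so it is twice Fr\'echet differentiable with
\begin{align*}
    D^2\Bigl(\tfrac{\gamma}{2}\norm{\cdot}_{\calU}^2\Bigr)(\theta)[h,h]=\gamma\norm{h}_{\calU}^2 .
\end{align*}
Some care is needed if $\norm{\theta}_{\calU}$ means the norm of the control itself rather than of its perturbation. For $\mathcal H_{\calU}=L^2([0,T];\R^{d_\theta})$ the two readings agree, because $\theta^\ast\in L^\infty\subset L^2$. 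In general I would interpret the penalty through the chart, as $\frac{\gamma}{2}\norm{\theta^\ast+h}^2$ expanded in $h$, whose second derivative is still $\gamma\norm{h}^2$.

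\textbf{Lower bound for $\mcJ_\gamma$.} The bound \eqref{eq:negative-curvature-bound} implicitly assumes that $\mcJ$ is twice differentiable near $\theta^\ast$, and I will state this explicitly. By linearity of the second derivative, for every $\theta$ in the neighborhood and every admissible direction $h$,
\begin{align*}
    D^2\mcJ_\gamma(\theta)[h,h]=D^2\mcJ(\theta)[h,h]+\gamma\norm{h}_{\calU}^2\ge(\gamma-\beta)\norm{h}_{\calU}^2 .
\end{align*}
Since $\gamma>\beta$, the constant $\gamma-\beta$ is positive.

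\textbf{Quadratic growth.} Let $\theta_\gamma$ be an interior local minimizer of $\mcJ_\gamma$ lying in the neighborhood where \eqref{eq:negative-curvature-bound} holds. This is a point to flag. The neighborhood in the hypothesis is centred at $\theta^\ast$, so the conclusion applies to interior local minimizers of $\mcJ_\gamma$ inside a ball around $\theta_\gamma$ contained in that neighborhood. I will state this localization rather than claim the result literally for every minimizer.

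At such a point, interiority gives $D\mcJ_\gamma(\theta_\gamma)=0$. The coercivity bound above holds on a ball around $\theta_\gamma$. We can therefore apply \Cref{thm:local-qg-sufficient} to $\mcJ_\gamma$, with $\theta_\gamma$ in the role of $\theta^\ast$ and $\lambda_0=\gamma-\beta$. Equivalently, one can repeat its Taylor integral-remainder argument directly:
\begin{align*}
    \mcJ_\gamma(\theta)-\mcJ_\gamma(\theta_\gamma)=\int_0^1(1-s)D^2\mcJ_\gamma(\theta_\gamma+s h)[h,h]\dd s\ge\frac{\gamma-\beta}{2}\norm{h}_{\calU}^2 ,
\end{align*}
which holds because the segment $\theta_\gamma+sh$ stays in the convex ball. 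This is the local quadratic-growth condition \eqref{eq:quadratic-growth}.

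\textbf{Main obstacle.} Nothing here is analytically hard. The only delicate point is the domain issue above: the lower bound is assumed on a neighborhood of $\theta^\ast$, so the minimizer must lie in it and the admissible directions must span the chart. I would handle this by making the localization explicit.
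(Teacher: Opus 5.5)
Your proposal is correct and follows the paper's proof: the quadratic penalty contributes $\gamma\norm{h}_{\calU}^2$ to the second variation, adding this to the lower bound gives coercivity with constant $\gamma-\beta>0$, and \Cref{thm:local-qg-sufficient} then yields local quadratic growth. Your explicit localization, which requires the minimizer to lie inside the neighborhood where the Hessian bound holds, is a sensible tightening of the paper's looser phrase ``every interior local minimizer.''
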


\begin{proof}
The quadratic regularizer has second variation $\gamma\norm{h}_{\calU}^2$.  Adding this to \eqref{eq:negative-curvature-bound} gives
\begin{align*}
    D^2\mcJ_\gamma(\theta)[h,h]
    =D^2\mcJ(\theta)[h,h]+\gamma\norm{h}_{\calU}^2
    \ge (\gamma-\beta)\norm{h}_{\calU}^2.
\end{align*}
Since $\gamma>\beta$, \Cref{thm:local-qg-sufficient} applies to the regularized objective and gives local quadratic growth.
\end{proof}

\begin{remark}[Quadratic regularization and random connection dropping]
The deterministic term $\frac{\gamma}{2}\norm{\theta}_{\calU}^2$ should not be identified literally with dropout or DropConnect, but it plays a closely related stabilizing role.  Dropout randomly removes activations during training, while DropConnect randomly removes weights or connections.  In simple generalized-linear or locally quadratic regimes, averaging over this multiplicative noise produces an additional data-dependent penalty that behaves like an adaptive quadratic regularizer; see, for example, \citet{HintonEtAl2012,WagerWangLiang2013,WanEtAl2013}.  Thus the regularized objective $\mcJ_\gamma$ can be viewed as a deterministic analytic analogue of noise-induced stabilization: it penalizes large control norms and can remove flat or negative-curvature directions when the penalty strength dominates the local nonconvexity bound.  The analogy is only qualitative in the present continuous-depth Transformer setting, since dropout changes the stochastic training objective and may introduce state-dependent regularization rather than the isotropic Hilbert penalty used in \Cref{thm:regularization-coercivity}.
\end{remark}

Here is a concrete setting in which the local quadratic-growth and PL assumptions can be verified rather than postulated. 
\begin{example}[A restricted architecture with checkable quadratic growth and PL]
\label{ex:frozen-feature-readout-qg}
 Freeze the continuous hidden-state dynamics, so that $X_T=\Phi_T(X_0)$ is independent of the trainable parameter in this example, and train only a finite-dimensional readout chart
\begin{align}
\label{eq:frozen-feature-readout-chart}
    H_a(X)=H_0(X)+\Psi(X)a,
    \qquad a\in\R^m,
\end{align}
where $\Psi(X)a\in\R^{L\times d_v}$ is linear in $a$.  Write
\begin{align*}
    Z_{l,a}(X_0):=H_{0,l}(X_T(X_0))+\Psi_l(X_T(X_0))a,
    \qquad
    p_{l,a}(X_0):=\softmax(Z_{l,a}(X_0)).
\end{align*}
The reduced objective is
\begin{align*}
    j(a)=\E\left[\frac1L\sum_{l=0}^{L-1}
    \left(\log\sum_{r=1}^{d_v}e^{Z_{l,a,r}(X_0)}
    -Y_l(X_0)^\top Z_{l,a}(X_0)\right)\right].
\end{align*}
For any direction $h\in\R^m$, differentiation under the expectation gives
\begin{align}
\label{eq:restricted-readout-first-variation}
    Dj(a)[h]
    &=\E\left[\frac1L\sum_{l=0}^{L-1}
    \bigl(p_{l,a}(X_0)-Y_l(X_0)\bigr)^\top
    \Psi_l(X_T(X_0))h\right],\\
\label{eq:restricted-readout-second-variation}
    D^2j(a)[h,h]
    &=\E\left[\frac1L\sum_{l=0}^{L-1}
    v_{l,h}(X_0)^\top
    \Bigl(\operatorname{diag}(p_{l,a}(X_0))-p_{l,a}(X_0)p_{l,a}(X_0)^\top\Bigr)
    v_{l,h}(X_0)\right],
\end{align}
where $v_{l,h}(X_0):=\Psi_l(X_T(X_0))h$.

Assume that the admissible readout chart is restricted to the mean-zero logit subspace,
\begin{align*}
    v_{l,h}(X_0)=\Pi_0v_{l,h}(X_0),
    \qquad
    \Pi_0:=I-d_v^{-1}\mathbf 1\mathbf 1^\top,
\end{align*}
for every $l,h$, and $X_0$.  This removes the softmax shift invariance.  Assume also that the logits are uniformly bounded,
\begin{align*}
    \|Z_{l,a}(X_0)\|_\infty\le B_Z
    \qquad\text{for all }l,a,X_0
\end{align*}
on the chart under consideration.  Then every softmax component satisfies
\begin{align*}
    p_{l,a,j}(X_0)\ge p_{\min}(B_Z,d_v):=\frac{e^{-B_Z}}{d_v e^{B_Z}}
    =\frac{e^{-2B_Z}}{d_v}.
\end{align*}
For $v\in\mathbf 1^\perp$, the softmax Hessian obeys the covariance identity
\begin{align*}
    v^\top(\operatorname{diag}(p)-pp^\top)v
    =\sum_{i<j}p_ip_j(v_i-v_j)^2
    \ge d_v p_{\min}(B_Z,d_v)^2\|v\|^2.
\end{align*}
If the averaged feature Gram matrix is nondegenerate, namely
\begin{align}
\label{eq:restricted-feature-gram}
    \E\left[\frac1L\sum_{l=0}^{L-1}
    \norm{\Pi_0\Psi_l(X_T(X_0))h}^2\right]
    \ge c_\Psi\norm{h}^2
    \qquad\text{for all }h\in\R^m,
\end{align}
then \eqref{eq:restricted-readout-second-variation} yields the uniform curvature bound
\begin{align}
\label{eq:restricted-readout-curvature}
    D_a^2 j(a)[h,h]
    \ge \lambda_{\rm ro}\norm{h}^2,
    \qquad
    \lambda_{\rm ro}:=d_v p_{\min}(B_Z,d_v)^2c_\Psi>0.
\end{align}
Thus the restricted readout objective is strongly convex on this chart.  

A simple concrete choice makes the feature-Gram condition \eqref{eq:restricted-feature-gram} transparent.  Suppose $m=d_v-1$ and choose a fixed matrix $B_0\in\R^{d_v\times(d_v-1)}$ whose columns form an orthonormal basis of $\mathbf 1^\perp$.  If, for every token $l$, the readout feature map is
\begin{align}
\label{eq:specific-psi-example}
    \Psi_l(X)h=B_0h,
    \qquad h\in\R^{d_v-1},
\end{align}
then $\Pi_0\Psi_l(X)h=B_0h$ and
\begin{align*}
    \E\left[\frac1L\sum_{l=0}^{L-1}\norm{\Pi_0\Psi_l(X_T(X_0))h}^2\right]
    =\norm{h}^2.
\end{align*}
Thus the averaged feature Gram matrix is nondegenerate with $c_\Psi=1$.  More generally, any feature family whose averaged matrix $\E[L^{-1}\sum_l \Psi_l^\top\Pi_0\Psi_l]$ has a positive minimum eigenvalue satisfies the same condition.

If $a^\ast$ is its minimizer, then
\begin{align*}
    j(a)-j(a^\ast)\ge \frac{\lambda_{\rm ro}}2\norm{a-a^\ast}^2
\end{align*}
and the standard strong-convexity argument gives the PL inequality
\begin{align*}
    \frac12\norm{\nabla j(a)}^2\ge \lambda_{\rm ro}\bigl(j(a)-j(a^\ast)\bigr)
\end{align*}
on any convex sublevel set inside the chart.  Adding a ridge penalty $\gamma\|a\|^2/2$ changes the curvature lower bound to $\lambda_{\rm ro}+\gamma$.  The example is deliberately restricted--the hidden dynamics and feature map are frozen--but it gives a nontrivial architecture class in which the curvature assumptions used in the stability and local SGD statements can be checked directly from bounded logits and a feature-Gram nondegeneracy condition.
\end{example}

The next result uses local quadratic growth only inside a chosen basin.  It therefore gives a local stability statement for finite-depth approximations rather than a claim about global minimizers.
\begin{theorem}[Stability of isolated local minimizers under finite-depth approximation]
\label{thm:local-minimizer-stability}
Let $\theta^\ast\in\calU$ be a local minimizer satisfying the quadratic-growth condition \eqref{eq:quadratic-growth}.  Let $\calA_\eps\subset\calU$ be a finite-depth admissible class and suppose that, on the ball
\begin{align*}
    B_{r_0}(\theta^\ast)=\{\theta\in\calU:d_{\calU}(\theta,\theta^\ast)<r_0\},
\end{align*}
the continuous risk and a finite-depth surrogate $\mcJ_\eps$ satisfy the uniform approximation estimate
\begin{align}
\label{eq:local-uniform-approx}
    \sup_{\theta\in\calA_\eps\cap B_{r_0}(\theta^\ast)}
    \abs{\mcJ_\eps(\theta)-\mcJ(\theta)}\le \Gamma_\eps,
\end{align}
where $\Gamma_\eps\to0$ as $\eps\to0$.  Define the local discrete class
\begin{align*}
    \calA_{\eps,r_0}(\theta^\ast)
    :=\calA_\eps\cap B_{r_0}(\theta^\ast).
\end{align*}
Let $\widehat\theta_\eps\in\calA_{\eps,r_0}(\theta^\ast)$ be an $\eta_\eps$-minimizer of $\mcJ_\eps$ on this local discrete class, meaning that
\begin{align*}
    \mcJ_\eps(\widehat\theta_\eps)
    \le \inf_{\theta\in\calA_{\eps,r_0}(\theta^\ast)}\mcJ_\eps(\theta)+\eta_\eps.
\end{align*}
Assume that $\theta^\ast\in\calA_\eps$ or, more generally, that there exists $\theta_\eps^\ast\in\calA_{\eps,r_0}(\theta^\ast)$ with
\begin{align}
\label{eq:local-recovery-control}
    \mcJ(\theta_\eps^\ast)\le \mcJ(\theta^\ast)+\alpha_\eps,
\end{align}
where $\alpha_\eps\to0$ as $\eps\to0$.  Then, for all sufficiently small $\eps$,
\begin{align}
\label{eq:local-minimizer-distance}
    d_{\calU}(\widehat\theta_\eps,\theta^\ast)^2
    \le \frac{2}{\lambda}\bigl(2\Gamma_\eps+\eta_\eps+\alpha_\eps\bigr).
\end{align}
Consequently, $\widehat\theta_\eps\to\theta^\ast$ whenever $\Gamma_\eps+\eta_\eps+\alpha_\eps\to0$ as $\eps\to0$.
\end{theorem}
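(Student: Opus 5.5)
The argument is a standard sandwich: combine the quadratic-growth lower bound at $\widehat\theta_\eps$ with an upper bound on $\mcJ(\widehat\theta_\eps)-\mcJ(\theta^\ast)$. That upper bound comes from the uniform approximation \eqref{eq:local-uniform-approx}, the near-optimality of $\widehat\theta_\eps$, and the recovery control \eqref{eq:local-recovery-control}.

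First, since $\widehat\theta_\eps\in\calA_{\eps,r_0}(\theta^\ast)\subset B_{r_0}(\theta^\ast)$, the growth condition \eqref{eq:quadratic-growth} applies and gives
\begin{align*}
    \frac{\lambda}{2}d_{\calU}(\widehat\theta_\eps,\theta^\ast)^2\le \mcJ(\widehat\theta_\eps)-\mcJ(\theta^\ast).
\end{align*}
If the growth constant $r_0$ and the ball radius differ, I will take the smaller of the two and shrink the ball accordingly; this is harmless.

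Second, I bound the right-hand side using the chain
\begin{align*}
    \mcJ(\widehat\theta_\eps)
    \le \mcJ_\eps(\widehat\theta_\eps)+\Gamma_\eps
    \le \mcJ_\eps(\theta_\eps^\ast)+\eta_\eps+\Gamma_\eps
    \le \mcJ(\theta_\eps^\ast)+\eta_\eps+2\Gamma_\eps
    \le \mcJ(\theta^\ast)+\alpha_\eps+\eta_\eps+2\Gamma_\eps .
\end{align*}
\begin{itemize}
    \item The first and third inequalities use \eqref{eq:local-uniform-approx}. This is legitimate because both $\widehat\theta_\eps$ and $\theta_\eps^\ast$ lie in $\calA_\eps\cap B_{r_0}(\theta^\ast)$.
    \item The second inequality uses $\eta_\eps$-optimality, namely $\mcJ_\eps(\widehat\theta_\eps)\le\inf_{\calA_{\eps,r_0}}\mcJ_\eps+\eta_\eps\le \mcJ_\eps(\theta_\eps^\ast)+\eta_\eps$.
    \item The last inequality is \eqref{eq:local-recovery-control}.
    \item In the special case $\theta^\ast\in\calA_\eps$, one takes $\theta_\eps^\ast=\theta^\ast$ and $\alpha_\eps=0$.
\end{itemize}
Combining the two steps gives \eqref{eq:local-minimizer-distance}, and the convergence statement follows immediately.

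The only delicate point is the localization. The statement is local, so one must check that every point used sits inside the ball where both \eqref{eq:quadratic-growth} and \eqref{eq:local-uniform-approx} hold, and that $\calA_{\eps,r_0}(\theta^\ast)$ is nonempty. Nonemptiness is guaranteed by the existence of $\theta_\eps^\ast$. The phrase ``for all sufficiently small $\eps$'' is needed only so that $\Gamma_\eps,\eta_\eps,\alpha_\eps$ are finite and small, which makes the resulting radius bound nontrivial, that is, smaller than $r_0^2$. With the strict ball $d_{\calU}<r_0$ built into the definition of the local class, no further argument such as ruling out escape to the boundary is required.
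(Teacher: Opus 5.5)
Your proposal is correct and follows the paper's proof essentially line for line. Both use the same chain: $\eta_\eps$-optimality, the uniform estimate applied at $\widehat\theta_\eps$ and at $\theta_\eps^\ast$, and the recovery bound, closed off by quadratic growth at $\widehat\theta_\eps$. Your aside about shrinking the radius is unnecessary (and shrinking would change the class over which $\widehat\theta_\eps$ is near-optimal), since the statement uses the same $r_0$ for both the growth condition and the local class.
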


\begin{proof}
By the $\eta_\eps$-minimality of $\widehat\theta_\eps$ for $\mcJ_\eps$ on $\calA_{\eps,r_0}(\theta^\ast)$,
\begin{align*}
    \mcJ_\eps(\widehat\theta_\eps)
    \le \mcJ_\eps(\theta_\eps^\ast)+\eta_\eps.
\end{align*}
Using the uniform approximation estimate \eqref{eq:local-uniform-approx} on both sides and then the recovery estimate \eqref{eq:local-recovery-control}, we obtain
\begin{align*}
    \mcJ(\widehat\theta_\eps)
    &\le \mcJ_\eps(\widehat\theta_\eps)+\Gamma_\eps \\
    &\le \mcJ_\eps(\theta_\eps^\ast)+\eta_\eps+\Gamma_\eps \\
    &\le \mcJ(\theta_\eps^\ast)+\eta_\eps+2\Gamma_\eps \\
    &\le \mcJ(\theta^\ast)+\alpha_\eps+\eta_\eps+2\Gamma_\eps.
\end{align*}
On the other hand, the quadratic-growth condition gives
\begin{align*}
    \mcJ(\widehat\theta_\eps)
    \ge \mcJ(\theta^\ast)+\frac{\lambda}{2}d_{\calU}(\widehat\theta_\eps,\theta^\ast)^2.
\end{align*}
Combining the two inequalities proves \eqref{eq:local-minimizer-distance}.
\end{proof}

To use the local stability theorem in practice, one needs a uniform compact tube on which the constants in the Euler and loss estimates are valid.  The following proposition gives a simple sufficient condition.
\begin{proposition}[Uniform compactness of local finite-depth trajectories]
\label{prop:uniform-local-compactness}
Assume that the input support $K_{\rm in}:=\operatorname{supp}\mu_X$ is compact and that the finite-depth class satisfies $\calA_\eps\subset\Theta^M$ with the same compact parameter set $\Theta$ for every sufficiently small $\eps$.  Suppose that there is $B_F<\infty$ such that
\begin{align*}
    \norm{F(X;\theta)}\le B_F
\end{align*}
for all $\theta\in\Theta$ and all $X$ in the compact tube
\begin{align*}
    K_T:=\left\{X\in\R^{L\times d_x}:\operatorname{dist}(X,K_{\rm in})\le T\zeta_+B_F+1\right\}.
\end{align*}
Then, for every $\thetaeps\in\calA_\eps\cap B_{r_0}(\theta^\ast)$, all layer parameters take values in the same compact set $\Theta$, and all continuous and discrete trajectories starting from $K_{\rm in}$ remain in the common compact state set $K_T$ on $[0,T]$, provided $\eps$ is sufficiently small.
\end{proposition}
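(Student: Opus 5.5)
The first claim is immediate from the definitions: $\calA_\eps\subset\Theta^M$, so each $\thetaeps\in\calA_\eps\cap B_{r_0}(\theta^\ast)$ has layer values $\theta_k\in\Theta$. The same holds for its piecewise constant interpolation \eqref{eq:piecewise-control} at every $t\in[0,T]$. The ball restriction plays no role beyond this. The real content is the trajectory confinement. I would prove it by a continuity (first-exit) argument, using the bound $\norm{F}\le B_F$, which is known only on $K_T$, so we cannot use it before we know the trajectory stays there. Note first that $K_T$ is closed and bounded since $K_{\rm in}$ is compact, hence compact.

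\emph{Continuous trajectories.} Fix $x\in K_{\rm in}$ and an admissible piecewise constant control. The vector field $\zeta(t)F(\cdot;\theta_t)$ is locally Lipschitz near $K_T$ (this is the standing regularity from Assumption \ref{ass:field}, extended to $K_T$, or Proposition \ref{prop:single-head-checkable} for the model field). So the Carath\'eodory solution exists on a maximal interval. Let
\begin{align*}
\tau=\sup\{s\le T: X_u\in K_T \text{ for all } u\le s\}.
\end{align*}
For $u\le\tau$, the derivative satisfies $\norm{\dot X_u}\le\zeta_+B_F$. Hence
\begin{align*}
\operatorname{dist}(X_u,K_{\rm in})\le\norm{X_u-x}\le \zeta_+B_F u\le T\zeta_+B_F.
\end{align*}
This is strictly less than the tube radius $T\zeta_+B_F+1$. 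By continuity the trajectory stays in $K_T$ slightly beyond $\tau$ if $\tau<T$, a contradiction. So $\tau=T$, and boundedness also rules out blow-up.

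\emph{Discrete trajectories.} I would argue by induction on $k$. Suppose $X_0^\eps,\dots,X_k^\eps\in K_T$. Then
\begin{align*}
\norm{X_{k+1}^\eps-X_0}\le \sum_{j\le k}\eps\zeta_+\norm{F(X_j^\eps;\theta_j)}\le (k+1)\eps\zeta_+B_F\le T\zeta_+B_F.
\end{align*}
Since $X_0=x\in K_{\rm in}$, this gives $X_{k+1}^\eps\in K_T$. Here the bound on $F$ is applied only at points already known to lie in $K_T$, which is exactly why induction works.

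The main subtlety is this careful ordering, applying the $B_F$ bound only inside $K_T$. For the discrete scheme it is automatic, and the $+1$ margin is not even needed. The hypothesis ``$\eps$ sufficiently small'' enters only if one also wants the linear Euler interpolants, or comparison points such as the ones in \Cref{thm:pathwise}, to stay in $K_T$. For those I would use the margin: the interpolation between $X_k^\eps$ and $X_{k+1}^\eps$ moves by at most $\eps\zeta_+B_F\le1$ once $\eps\le 1/(\zeta_+B_F)$. Finally, the constants in $\Theta$ and $K_T$ are independent of $\eps$ and of the particular control. This gives the common compact set on which Assumptions \ref{ass:field} and \ref{ass:readout} can be invoked uniformly.
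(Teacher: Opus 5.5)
Your proof is correct and follows the same route as the paper's. The parameter claim is read off from $\calA_\eps\subset\Theta^M$, the continuous flow is kept in $K_T$ by a first-exit (continuity-bootstrap) argument that applies $\norm{F}\le B_F$ only inside $K_T$ and uses the strict inequality $T\zeta_+B_F<T\zeta_+B_F+1$, and the Euler iterates are kept in $K_T$ by induction on $k$. Your side remarks are also accurate: the paper's argument likewise never uses smallness of $\eps$, and the Euler interpolants need no extra margin either, since every point $X_k^\eps+s\eps\zeta(t_k)F(X_k^\eps;\theta_k)$ with $s\in[0,1]$ already lies within $T\zeta_+B_F$ of $X_0$.
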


\begin{proof}
The parameter statement is immediate from $\calA_\eps\subset\Theta^M$: each admissible layer sequence has coordinates $\theta_k\in\Theta$, independently of $\eps$ and independently of the local ball.  For the continuous flow, for any $X_0\in K_{\rm in}$,
\begin{align*}
    \norm{X_t-X_0}
    \le \int_0^t \zeta(s)\norm{F(X_s;\theta_s)}\dd s
    \le T\zeta_+B_F
\end{align*}
as long as the trajectory stays in $K_T$.  The right-hand side is strictly smaller than the tube radius $T\zeta_+B_F+1$, so the usual continuity-bootstrap argument prevents exit from $K_T$ before time $T$.  For the discrete recursion,
\begin{align*}
    \norm{X_k^\eps-X_0}
    \le \sum_{m=0}^{k-1}\eps\zeta(t_m)\norm{F(X_m^\eps;\theta_m)}
    \le T\zeta_+B_F,
\end{align*}
again as long as the iterates remain in $K_T$.  Since the bound is strictly inside the tube radius, the same induction/bootstrap argument shows that all iterates remain in $K_T$.  Therefore the local class uses one compact parameter set and one compact state set uniformly.
\end{proof}

\begin{remark}[When the local uniform approximation holds]
In the Transformer setting of this paper, \eqref{eq:local-uniform-approx} holds for the population finite-depth surrogate under the uniform assumptions used in \Cref{thm:pathwise}.  Indeed, Proposition \ref{prop:uniform-local-compactness} gives a concrete sufficient condition ensuring that every control in $\calA_\eps\cap B_{r_0}(\theta^\ast)$ takes values in the same compact parameter set, all trajectories remain in the same compact state set $K_T$, and the constants in Assumptions \ref{ass:controls}, \ref{ass:field}, and \ref{ass:readout} are uniform over the local class.  Then \Cref{cor:population-discretization} gives
\begin{align*}
    \sup_{\theta\in\calA_\eps\cap B_{r_0}(\theta^\ast)}
    \abs{\mcJ_\eps(\theta)-\mcJ(\theta)}\le C_T\eps.
\end{align*}
Thus one can take $\Gamma_\eps=C_T\eps$ for population risks.  If $\mcJ_\eps$ is replaced by the empirical finite-depth risk $\mcJ_{N,\eps}$ in \eqref{eq:empirical-risk}, the uniform approximation also contains a sampling term.  For a single fixed control this sampling term is given by \eqref{eq:fixed-control-error}; for finite local classes it is controlled by the finite-class bound \eqref{eq:finite-class-error}; and for infinite local classes it is controlled by the entropy estimate \eqref{eq:metric-entropy-error}.
\end{remark}

\subsection{Global quadratic growth and global minimizer stability}

We now contrast the preceding local analysis with a global stability statement.  Since global quadratic growth is a strong assumption for Transformer objectives, the next theorem first records a classical sufficient condition.
\begin{theorem}[Sufficient condition for global quadratic growth]
\label{thm:global-qg-sufficient}
Let $\calU$ be a convex subset of a Hilbert space with metric induced by the Hilbert norm.  Suppose that $\mcJ$ is Fr\'echet differentiable and globally $\lambda$-strongly convex on $\calU$, namely, for every $\theta,\vartheta\in\calU$ and every $s\in[0,1]$,
\begin{align}
\label{eq:strong-convexity-control}
    \mcJ((1-s)\theta+s\vartheta)
    \le (1-s)\mcJ(\theta)+s\mcJ(\vartheta)
    -\frac{\lambda}{2}s(1-s)d_{\calU}(\theta,\vartheta)^2.
\end{align}
If $\theta^\ast$ is a global minimizer, then it satisfies the global quadratic-growth condition \eqref{eq:global-growth}.

\end{theorem}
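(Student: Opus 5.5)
The plan is to read the global quadratic-growth condition \eqref{eq:global-growth} as the global analogue of \eqref{eq:quadratic-growth}, namely
\begin{align*}
    \mcJ(\theta)\ge \mcJ(\theta^\ast)+\frac{\lambda}{2}d_{\calU}(\theta,\theta^\ast)^2
    \qquad\text{for every }\theta\in\calU,
\end{align*}
and to derive it directly from the strong convexity inequality \eqref{eq:strong-convexity-control}, using only convexity of $\calU$ and global minimality of $\theta^\ast$. Fréchet differentiability is not essential; I will note the alternative route through $D\mcJ(\theta^\ast)$ only as a check.

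Fix $\theta\in\calU$ and apply \eqref{eq:strong-convexity-control} with the pair $(\theta^\ast,\theta)$ and $s\in(0,1)$. Since $\calU$ is convex, the point $\theta_s:=(1-s)\theta^\ast+s\theta$ lies in $\calU$. Global minimality then gives $\mcJ(\theta^\ast)\le\mcJ(\theta_s)$. Combining this with \eqref{eq:strong-convexity-control} yields
\begin{align*}
    \mcJ(\theta^\ast)\le (1-s)\mcJ(\theta^\ast)+s\mcJ(\theta)-\frac{\lambda}{2}s(1-s)d_{\calU}(\theta,\theta^\ast)^2 .
\end{align*}
Subtracting $(1-s)\mcJ(\theta^\ast)$ and dividing by $s>0$ gives
\begin{align*}
    \mcJ(\theta)-\mcJ(\theta^\ast)\ge \frac{\lambda}{2}(1-s)d_{\calU}(\theta,\theta^\ast)^2 .
\end{align*}
Letting $s\downarrow0$ yields exactly the claimed inequality, with the same constant $\lambda$ and no restriction on $d_{\calU}(\theta,\theta^\ast)$.

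As a consistency remark, one can also use differentiability. Strong convexity implies $\mcJ(\theta)\ge\mcJ(\theta^\ast)+D\mcJ(\theta^\ast)[\theta-\theta^\ast]+\frac{\lambda}{2}\norm{\theta-\theta^\ast}^2$, and the first-order optimality condition over the convex set gives $D\mcJ(\theta^\ast)[\theta-\theta^\ast]\ge0$. The same conclusion follows.

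The only delicate point is bookkeeping rather than analysis. The limit $s\downarrow0$ must be taken after dividing by $s$, and the argument relies on $\calU$ being convex so that every segment point $\theta_s$ is admissible. A cheap corollary is also available: applying the bound to two global minimizers forces their distance to be zero, so the minimizer is unique.
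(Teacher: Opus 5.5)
Your main argument is correct, and it takes a different route from the paper. The paper's proof is exactly your ``consistency remark.'' It first passes from \eqref{eq:strong-convexity-control} to the first-order inequality $\mcJ(\theta)\ge\mcJ(\vartheta)+\inner{D\mcJ(\vartheta)}{\theta-\vartheta}+\frac{\lambda}{2}d_{\calU}(\theta,\vartheta)^2$. It then sets $\vartheta=\theta^\ast$ and invokes the variational inequality $\inner{D\mcJ(\theta^\ast)}{\theta-\theta^\ast}\ge0$ for a minimizer over a convex set.

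Each of those two steps is itself a difference quotient along the segment $\theta^\ast+s(\theta-\theta^\ast)$ followed by a limit $s\downarrow0$. Fr\'echet differentiability is used only to identify both limits with $D\mcJ(\theta^\ast)[\theta-\theta^\ast]$, which then cancels. You insert the minimality $\mcJ(\theta^\ast)\le\mcJ(\theta_s)$ before taking the limit, so the derivative never appears and a single limit suffices.

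What your route buys is generality. It needs neither differentiability nor the Hilbert structure; any metric $d_{\calU}$ appearing in the strong-convexity inequality works. It also sidesteps how $D\mcJ$ should be understood when $\calU$ has empty interior in the ambient Hilbert space, where only one-sided directional derivatives along feasible directions exist. The paper's route ties the result to the first-order variational-inequality framework used elsewhere, such as the Pontryagin condition in \Cref{prop:pmp}. The cost is a regularity hypothesis that your argument shows is superfluous for this statement.
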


\begin{proof}
Strong convexity implies the first-order inequality
\begin{align*}
    \mcJ(\theta)\ge \mcJ(\vartheta)+\inner{D\mcJ(\vartheta)}{\theta-\vartheta}
    +\frac{\lambda}{2}d_{\calU}(\theta,\vartheta)^2
\end{align*}
for all $\theta,\vartheta\in\calU$ for which the line segment remains in $\calU$.  Taking $\vartheta=\theta^\ast$ and using the variational inequality for a global minimizer over the convex set,
\begin{align*}
    \inner{D\mcJ(\theta^\ast)}{\theta-\theta^\ast}\ge0,
\end{align*}
we obtain
\begin{align*}
    \mcJ(\theta)\ge \mcJ(\theta^\ast)+\frac{\lambda}{2}d_{\calU}(\theta,\theta^\ast)^2,
\end{align*}
which is \eqref{eq:global-growth}.
\end{proof}

For the Transformer control objective, global strong convexity is generally not expected, because the flow map depends nonlinearly on the controls and the parameterization has symmetries.  Therefore global quadratic growth should be viewed as a structural stability assumption or as a property of a restricted or regularized control class, rather than as an automatic feature of the model.
More explicitly, condition \eqref{eq:strong-convexity-control} is not a consequence of using cross-entropy loss.  Cross-entropy is convex in the logit variable, but the control-to-logit map $\theta\mapsto H(\Phi_T^\theta(X_0))$ is nonlinear since the state equation itself depends on $\theta$.  The composition of a convex function with a nonlinear non-affine map need not be convex.  In addition, different controls may generate the same vector field on the data-supporting region, and attention or feed-forward parameterizations may have scaling and permutation symmetries.  These effects create flat directions incompatible with global strong convexity.

Checkable sufficient conditions for \eqref{eq:strong-convexity-control} are correspondingly restrictive.  One may obtain it if the admissible control class is restricted to a convex finite-dimensional chart, the terminal logit map is affine or uniformly close to affine on that chart, the induced design/operator matrix has a uniform lower singular-value bound, and the logits remain in a region where the cross-entropy Hessian is uniformly positive on the identifiable subspace.  Alternatively, adding a control regularization term
\begin{align*}
    \mcJ_\gamma(\theta)=\mcJ(\theta)+\frac{\gamma}{2}\int_0^T\norm{\theta_t}^2\dd t
\end{align*}
can make the regularized objective strongly convex on a convex restricted class if the unregularized part has Hessian bounded below by $-\beta I$ and $\gamma>\beta$.  Without such identifiability, curvature, and regularization assumptions, \eqref{eq:strong-convexity-control} should be treated as a hypothesis for stability analysis rather than a general property of Transformer training.

Under the global growth assumption, value convergence controls the distance to the unique global minimizer.  The next theorem is the global analogue of the local finite-depth stability result.
\begin{theorem}[Global minimizer stability]
\label{thm:global-minimizer-stability}
Assume that $(\calU,d_{\calU})$ is compact and that $\theta\mapsto\mcJ(\theta)$ is continuous on $\calU$.  Suppose that the full continuous control problem admits a unique global minimizer $\theta^\ast$ and that $\theta^\ast$ satisfies the global quadratic-growth condition
\begin{align}
\label{eq:global-growth}
    \mcJ(\theta)\ge \mcJ(\theta^\ast)+\frac{\lambda}{2}d_{\calU}(\theta,\theta^\ast)^2
    \qquad \text{for all }\theta\in\calU.
\end{align}
Let $\calA_\eps\subset\calU$ be a finite-depth class and assume that the uniform and recovery estimates
\begin{align*}
    \sup_{\theta\in\calA_\eps}\abs{\mcJ_\eps(\theta)-\mcJ(\theta)}\le \Gamma_\eps,
    \qquad
    \inf_{\theta\in\calA_\eps}\mcJ(\theta)
    \le \mcJ(\theta^\ast)+\alpha_\eps
\end{align*}
hold with $\Gamma_\eps\to0$ and $\alpha_\eps\to0$ as $\eps\to0$.  If $\widehat\theta_\eps$ is an $\eta_\eps$-minimizer of $\mcJ_\eps$ over $\calA_\eps$, then
\begin{align}
\label{eq:global-minimizer-distance}
    d_{\calU}(\widehat\theta_\eps,\theta^\ast)^2
    \le \frac{2}{\lambda}\bigl(2\Gamma_\eps+\eta_\eps+\alpha_\eps\bigr).
\end{align}
In particular, discrete approximate global minimizers $\widehat\theta_\eps$ converge to $\theta^\ast$ if $\Gamma_\eps+\eta_\eps+\alpha_\eps\to0$ as $\eps\to0$.
\end{theorem}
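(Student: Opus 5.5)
The plan is to repeat the sandwich argument from \Cref{thm:local-minimizer-stability}, with the local ball replaced by all of $\calU$. The global growth condition \eqref{eq:global-growth} holds at every point of $\calU$, so no localization step is needed. In particular, no a priori argument is required to show that $\widehat\theta_\eps$ lies in a basin around $\theta^\ast$.

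First I need an almost-optimal finite-depth comparison control. The recovery hypothesis only bounds an infimum, so I fix any $\tau>0$ and choose $\theta_\eps^\tau\in\calA_\eps$ with
\[
\mcJ(\theta_\eps^\tau)\le \mcJ(\theta^\ast)+\alpha_\eps+\tau .
\]
Next I chain the inequalities as before. By $\eta_\eps$-minimality of $\widehat\theta_\eps$ for $\mcJ_\eps$ over $\calA_\eps$, we have $\mcJ_\eps(\widehat\theta_\eps)\le \mcJ_\eps(\theta_\eps^\tau)+\eta_\eps$. Both $\widehat\theta_\eps$ and $\theta_\eps^\tau$ lie in $\calA_\eps$, so the uniform estimate $\sup_{\calA_\eps}|\mcJ_\eps-\mcJ|\le\Gamma_\eps$ applies twice and gives
\[
\mcJ(\widehat\theta_\eps)\le \mcJ(\theta^\ast)+2\Gamma_\eps+\eta_\eps+\alpha_\eps+\tau .
\]

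Then I use $\calA_\eps\subset\calU$ to apply \eqref{eq:global-growth} at $\theta=\widehat\theta_\eps$. This gives the lower bound $\mcJ(\theta^\ast)+\tfrac{\lambda}{2}d_{\calU}(\widehat\theta_\eps,\theta^\ast)^2$. Combining the two bounds and letting $\tau\downarrow0$ yields \eqref{eq:global-minimizer-distance}. The convergence statement follows immediately.

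Compactness and continuity are not needed for the inequality itself. They only guarantee existence of $\theta^\ast$, as in \Cref{thm:existence-global-minimizer}. Uniqueness is already implied by \eqref{eq:global-growth}.

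The only step needing any care is the recovery step, because the infimum may not be attained. The $\tau$-approximation handles this. If $\eta_\eps>0$, one could alternatively absorb the slack into $\eta_\eps$. I expect no genuine obstacle beyond this.
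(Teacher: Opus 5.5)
Your proposal is correct and is the same sandwich argument the paper uses: $\eta_\eps$-minimality, then the uniform bound applied twice on $\calA_\eps$, then recovery, then global growth at $\widehat\theta_\eps$. Your $\tau\downarrow0$ step is a more careful version of the paper's ``absorb the harmless $o(1)$ into $\alpha_\eps$.'' Your aside about instead absorbing the slack into $\eta_\eps$ would actually change the bound, but it is unnecessary. You can even skip the comparison control altogether, since $\inf_{\calA_\eps}\mcJ_\eps\le\inf_{\calA_\eps}\mcJ+\Gamma_\eps\le\mcJ(\theta^\ast)+\alpha_\eps+\Gamma_\eps$.
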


\begin{proof}
The proof is the same as the local argument, but now the quadratic-growth condition holds on all of $\calU$.  Choose $\theta_\eps^\ast\in\calA_\eps$ such that
\begin{align*}
    \mcJ(\theta_\eps^\ast)\le \mcJ(\theta^\ast)+\alpha_\eps+o(1),
\end{align*}
and absorb the harmless $o(1)$ into $\alpha_\eps$.  The approximate minimality of $\widehat\theta_\eps$ and the uniform approximation bound give
\begin{align*}
    \mcJ(\widehat\theta_\eps)
    \le \mcJ(\theta^\ast)+2\Gamma_\eps+\eta_\eps+\alpha_\eps.
\end{align*}
Combining this with \eqref{eq:global-growth} proves \eqref{eq:global-minimizer-distance}.
\end{proof}

\subsection{From the continuous optimizer to a discrete Transformer}

The preceding results suggest the following principled route from the continuous problem to a finite-depth Transformer.  First, solve or approximate the continuous first-order condition in Proposition \ref{prop:pmp}.  In the unconstrained case, the population control gradient is
\begin{align}
\label{eq:control-gradient}
    G_\theta(t)=\zeta(t)\,\E\left[D_\theta F(X_t^\theta;\theta_t)^\top P_t^\theta\right].
\end{align}
A formal gradient descent in the control variable is
\begin{align}
\label{eq:control-gradient-flow}
    \partial_s\theta_s(t)=-G_{\theta_s}(t),
\end{align}
with projection onto $\Theta$ if the control set is constrained.  Along smooth solutions of this auxiliary optimization flow, the first-variation formula in the proof of Proposition \ref{prop:pmp} yields
\begin{align*}
    \frac{\dd}{\dd s}\mcJ(\theta_s)
    =-\int_0^T\norm{G_{\theta_s}(t)}^2\dd t\le0,
\end{align*}
so stationary points of the control-gradient flow satisfy the Pontryagin stationarity condition.

Second, after a continuous candidate $\theta^\ast$ has been obtained, one builds a finite-depth Transformer by sampling or averaging the control on the depth grid:
\begin{align}
\label{eq:continuous-to-discrete-control}
    \theta_k^\eps=\theta^\ast(t_k)
    \quad \text{or} \quad
    \theta_k^\eps=\frac1\eps\int_{t_k}^{t_{k+1}}\theta^\ast(t)\dd t.
\end{align}

\begin{theorem}[Recovery rate for grid-sampled controls]
\label{prop:grid-sampled-recovery}
Assume Assumptions \ref{ass:controls}, \ref{ass:field}, \ref{ass:readout}, and \ref{ass:theta-lip}.  Let $\theta^\ast:[0,T]\to\Theta$ be Lipschitz with constant $L_{\theta^\ast}$.  Define the piecewise constant control $\thetaeps$ by either grid sampling or cell averaging as in \eqref{eq:continuous-to-discrete-control}.  Then there exists a constant $C_T$ independent of $\eps$ such that
\begin{align}
\label{eq:grid-sampled-recovery-rate}
    \abs{\mcJ(\thetaeps)-\mcJ(\theta^\ast)}\le C_T\eps.
\end{align}
Consequently, in the recovery estimates \eqref{eq:local-recovery-control} and in the global recovery condition, one may take $\alpha_\eps=O(\eps)$ whenever the sampled control belongs to the admissible finite-depth class $\calA_\eps$.
\end{theorem}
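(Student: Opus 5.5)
The plan follows the density remark after \Cref{thm:full-control-approximation}: first bound the control discrepancy in $L^1$, then propagate it through the flow with Gronwall, and finally pass to the terminal loss.

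\emph{Control discrepancy.} For grid sampling, $\thetaeps_t=\theta^\ast(t_k)$ on $[t_k,t_{k+1})$, so the Lipschitz bound gives $\norm{\theta^\ast_t-\thetaeps_t}\le L_{\theta^\ast}(t-t_k)\le L_{\theta^\ast}\eps$. For cell averaging,
\begin{align*}
    \norm{\theta^\ast_t-\thetaeps_t}
    \le\frac1\eps\int_{t_k}^{t_{k+1}}\norm{\theta^\ast_t-\theta^\ast_s}\dd s
    \le L_{\theta^\ast}\eps .
\end{align*}
The averaged value still lies in $\Theta$ provided $\Theta$ is convex. If $\Theta$ is not convex, I would either assume convexity or restrict to the sampling option. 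In both cases $\sup_t\norm{\theta^\ast_t-\thetaeps_t}\le L_{\theta^\ast}\eps$, hence $\int_0^T\norm{\theta^\ast_t-\thetaeps_t}\dd t\le TL_{\theta^\ast}\eps$.

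\emph{Flow comparison.} Fix $X_0$ and let $X_t$, $\widetilde X_t$ denote the continuous flows driven by $\theta^\ast$ and $\thetaeps$. Both stay in $K$ by Assumption \ref{ass:field}. Write the difference in integral form,
\begin{align*}
    X_t-\widetilde X_t=\int_0^t\zeta(s)\bigl[F(X_s;\theta^\ast_s)-F(\widetilde X_s;\thetaeps_s)\bigr]\dd s ,
\end{align*}
and split the integrand by adding and subtracting $F(\widetilde X_s;\theta^\ast_s)$. Assumption \ref{ass:field} bounds the first piece by $\zeta_+L_F\norm{X_s-\widetilde X_s}$, and Assumption \ref{ass:theta-lip} bounds the second by $\zeta_+L_\theta\norm{\theta^\ast_s-\thetaeps_s}$. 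The integral Gronwall inequality then yields
\begin{align*}
    \sup_{t\le T}\norm{X_t-\widetilde X_t}\le \zeta_+L_\theta e^{\zeta_+L_FT}\,TL_{\theta^\ast}\eps .
\end{align*}
This constant is uniform in $X_0$.

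\emph{Loss.} Apply \eqref{eq:ce-h-lip} at time $T$ and integrate against $\mu_X$, exactly as in \Cref{cor:population-discretization}. This gives \eqref{eq:grid-sampled-recovery-rate} with $C_T=L_\ell\zeta_+L_\theta e^{\zeta_+L_FT}TL_{\theta^\ast}$. For the consequence: when $\thetaeps\in\calA_\eps$, it is a valid choice of $\theta^\ast_\eps$ in \eqref{eq:local-recovery-control} and in the global recovery condition, so $\alpha_\eps=C_T\eps$. In the local case one also needs $\thetaeps\in B_{r_0}(\theta^\ast)$. For $d_1$ or $L^2$ this holds because $d_{\calU}(\thetaeps,\theta^\ast)\le C\eps<r_0$ for small $\eps$.

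\emph{Main obstacle.} The only delicate point is measurability and well-posedness of the flow under a merely measurable control. This is handled by Carathéodory theory, using that $F$ is Lipschitz in $X$ and continuous in $\theta$; continuity in $\theta$ follows from Assumption \ref{ass:theta-lip}. The remaining requirement, that both trajectories stay in $K$, is supplied by Assumption \ref{ass:field} or by Proposition \ref{prop:uniform-local-compactness}.
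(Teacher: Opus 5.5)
Your proposal is correct and follows the paper's proof essentially step by step. Both arguments use a sup-norm control discrepancy of order $L_{\theta^\ast}\eps$ for sampling and for averaging, then a Gronwall estimate on the flow difference via Assumptions \ref{ass:field} and \ref{ass:theta-lip}, and finally \eqref{eq:ce-h-lip} integrated against $\mu_X$. Your extra remarks are valid refinements that the paper leaves implicit: $\Theta$ must be convex for the cell average to stay in $\Theta$, and $\thetaeps$ must lie in $B_{r_0}(\theta^\ast)$ for the local recovery condition.
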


\begin{proof}
For the grid-sampled control, Lipschitz continuity of $\theta^\ast$ gives, for $t\in[t_k,t_{k+1})$,
\begin{align*}
    \norm{\theta^\ast_t-\thetaeps_t}
    \le L_{\theta^\ast}\eps.
\end{align*}
For the cell-average control the same $O(\eps)$ bound holds, possibly with a different constant.  Let $X_t$ and $\widetilde X_t$ be the controlled flows driven by $\theta^\ast$ and $\thetaeps$ from the same input.  Using Assumption \ref{ass:theta-lip},
\begin{align*}
    \frac{\dd}{\dd t}\norm{X_t-\widetilde X_t}
    \le \zeta_+L_F\norm{X_t-\widetilde X_t}
    +\zeta_+L_\theta\norm{\theta^\ast_t-\thetaeps_t}.
\end{align*}
Gronwall's inequality yields $\sup_{0\le t\le T}\norm{X_t-\widetilde X_t}\le C_T\eps$.  The Lipschitz bound \eqref{eq:ce-h-lip}, followed by integration with respect to $\mu_X$, gives \eqref{eq:grid-sampled-recovery-rate}.
\end{proof}

Therefore, \Cref{thm:local-minimizer-stability,thm:global-minimizer-stability} shows that local or global discrete minimizers initialized near this sampled control converge back to the corresponding continuous minimizer, up to the Euler, discretization, recovery, and optimization errors.

This continuous-to-discrete viewpoint also explains the role of the earlier approximation theorems.  The continuous problem identifies candidate depth profiles and layer-parameter curves through the transport equation \eqref{eq:transport} and the Pontryagin system \eqref{eq:pmp-state}--\eqref{eq:pmp-adjoint}, together with the stationarity condition \eqref{eq:pmp-stationarity}.  Once a candidate control has been found, the finite-depth Transformer inherits it by grid sampling or cell averaging as in \eqref{eq:continuous-to-discrete-control}.  The pathwise estimate \eqref{eq:pathwise-error} quantifies the state error introduced by replacing the continuous flow by finitely many residual layers.  The fixed-control estimate \eqref{eq:fixed-control-error}, the finite-class estimate \eqref{eq:finite-class-error}, and the entropy estimate \eqref{eq:metric-entropy-error} quantify the additional statistical error from using finitely many training samples.  Finally, \eqref{eq:empirical-to-full-value} combines the continuous-control approximation error, the Euler depth error, and the sampling error into a single bound for optimal values.

\section{Initialization and a priori range estimates from compact input support}
\label{sec:initialization-range}

We finally discuss how the compact input distribution can be used to choose an initialization and to estimate an a priori range for global minimizers.  The main point is that the input support alone does not determine a unique globally convergent initialization.  Rapid convergence of SGD requires an additional landscape condition, such as a PL inequality or strong convexity on a basin containing the initialization.  Compactness of $\operatorname{supp}\mu_X$ is nevertheless useful since it gives uniform bounds on states, logits, losses, and gradients, and therefore allows one to localize the search for minimizers.

Let $K_{\rm in}:=\operatorname{supp}\mu_X$ be compact.  Under the bounded-vector-field assumptions in \eqref{eq:bounded-vector-field-assumptions}, all trajectories generated by controls with values in a compact parameter set remain in a compact tube $K_T$ as in Proposition \ref{prop:uniform-local-compactness}.  Hence the loss and its control gradient are uniformly bounded on such a tube.  This justifies the following continuous-to-discrete initialization principle:
\begin{enumerate}
    \item solve a coarse or regularized version of the continuous control problem \eqref{eq:full-continuous-value};
    \item sample or average the resulting continuous control as in \eqref{eq:continuous-to-discrete-control};
    \item use the sampled sequence as the initial finite-depth Transformer parameters and then run projected SGD on the empirical finite-depth loss.  Here projected SGD means the constrained iteration
    \begin{align*}
        \theta^{n+1}=\Pi_{\calA_\eps}\left(\theta^n-\eta_n\nabla \widehat{\mcJ}_{N,\eps}(\theta^n)\right),
    \end{align*}
    where $\eta_n>0$ is a stepsize, $\widehat{\mcJ}_{N,\eps}$ denotes the empirical finite-depth objective \eqref{eq:empirical-risk} written in a Euclidean parameter chart, and $\Pi_{\calA_\eps}$ is the Euclidean projection onto the admissible discrete parameter set.  If $\calA_\eps=\Theta^M$ with compact convex $\Theta$, then $\Pi_{\calA_\eps}$ is simply the layerwise projection onto $\Theta$.
\end{enumerate}
This procedure initializes the discrete network near a continuous candidate whose objective value is controlled by \Cref{prop:grid-sampled-recovery}.

\begin{theorem}[Fast local SGD convergence from a continuous-control initialization]
\label{thm:sgd-initialization}
Let $\widehat{\mcJ}_{N,\eps}$ be the empirical finite-depth objective on a Euclidean parameter chart for $\calA_\eps$.  Suppose that there is a ball $B_R(\theta_\eps^\ast)$ around a global minimizer $\theta_\eps^\ast$ of $\widehat{\mcJ}_{N,\eps}$ such that the following hold on this ball:
\begin{enumerate}
    \item $\widehat{\mcJ}_{N,\eps}$ has $L_g$-Lipschitz gradient (see Proposition \ref{prop:empirical-risk-lg-smooth});
    \item $\widehat{\mcJ}_{N,\eps}$ satisfies the PL inequality (see Proposition \ref{prop:empirical-risk-pl-sufficient})
    \begin{align}
    \label{eq:pl-inequality}
        \frac12\norm{\nabla \widehat{\mcJ}_{N,\eps}(\theta)}^2
        \ge \mu_{\rm PL}\bigl(\widehat{\mcJ}_{N,\eps}(\theta)-\widehat{\mcJ}_{N,\eps}(\theta_\eps^\ast)\bigr);
    \end{align}
    \item Let
    \begin{align*}
        \mathcal G_n:=\operatorname{Alg}(\theta_0,g_0,\ldots,g_{n-1})
    \end{align*}
    be the training filtration generated by the past iterates and stochastic gradients. The stochastic gradient estimator $g_n$ is conditionally unbiased and has conditionally bounded variance:
    \begin{align}
    \label{eq:sgd-unbiased-variance}
        \E[g_n\mid\mathcal G_n]&=\nabla \widehat{\mcJ}_{N,\eps}(\theta_n),\\
        \E\bigl[\norm{g_n-\nabla \widehat{\mcJ}_{N,\eps}(\theta_n)}^2\mid\mathcal G_n\bigr]&\le \sigma_{\rm sg}^2.
    \end{align}
    In particular,
    \begin{align}
    \label{eq:sgd-second-moment}
        \E[\norm{g_n}^2\mid\mathcal G_n]
        \le \norm{\nabla \widehat{\mcJ}_{N,\eps}(\theta_n)}^2+\sigma_{\rm sg}^2.
    \end{align}
\end{enumerate}
If the initialization $\theta_0$ obtained by sampling a continuous candidate lies in $B_R(\theta_\eps^\ast)$ and the projected SGD iterates remain in this ball, then for any step size $0<\eta\le 1/L_g$,
\begin{align}
\label{eq:sgd-linear-rate}
    \E\bigl[\widehat{\mcJ}_{N,\eps}(\theta_n)-\widehat{\mcJ}_{N,\eps}(\theta_\eps^\ast)\bigr]
    \le (1-\eta\mu_{\rm PL})^n
    \bigl(\widehat{\mcJ}_{N,\eps}(\theta_0)-\widehat{\mcJ}_{N,\eps}(\theta_\eps^\ast)\bigr)
    +\frac{\eta L_g\sigma_{\rm sg}^2}{2\mu_{\rm PL}}.
\end{align}
In particular, if the continuous candidate is close enough that the initial objective gap is small, then SGD converges rapidly to a noise-dominated neighborhood of the global minimizer.
\end{theorem}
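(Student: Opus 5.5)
The plan is the standard descent-lemma argument for smooth PL objectives under unbiased stochastic gradients. Write $f:=\widehat{\mcJ}_{N,\eps}$, $f^\ast:=f(\theta_\eps^\ast)$ and $\Delta_n:=\E[f(\theta_n)-f^\ast]$.

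\textbf{Treatment of the projection.} The theorem assumes the iterates remain in $B_R(\theta_\eps^\ast)$. I will first carry out the argument for the unprojected update $\theta_{n+1}=\theta_n-\eta g_n$, which is the case where $\Pi_{\calA_\eps}$ acts as the identity along the trajectory. For a genuinely active projection onto a convex set, I would replace $\nabla f$ by the gradient mapping and use the proximal-PL version of the same inequality; I treat this as a routine extension.

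\textbf{Step 1: descent lemma.} Since $\nabla f$ is $L_g$-Lipschitz on the ball and the segment $[\theta_n,\theta_{n+1}]$ lies in the convex ball, we have
\begin{align*}
f(\theta_{n+1})\le f(\theta_n)-\eta\inner{\nabla f(\theta_n)}{g_n}+\frac{L_g\eta^2}{2}\norm{g_n}^2.
\end{align*}

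\textbf{Step 2: conditional expectation.} Take $\E[\cdot\mid\mathcal G_n]$, noting that $\theta_n$ is $\mathcal G_n$-measurable. Use unbiasedness \eqref{eq:sgd-unbiased-variance} for the linear term and the second-moment bound \eqref{eq:sgd-second-moment} for the quadratic term:
\begin{align*}
\E[f(\theta_{n+1})\mid\mathcal G_n]\le f(\theta_n)-\eta\Bigl(1-\frac{L_g\eta}{2}\Bigr)\norm{\nabla f(\theta_n)}^2+\frac{L_g\eta^2\sigma_{\rm sg}^2}{2}.
\end{align*}
Since $\eta\le1/L_g$, the coefficient satisfies $1-L_g\eta/2\ge1/2$, so the gradient term is at most $-\frac{\eta}{2}\norm{\nabla f(\theta_n)}^2$.

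\textbf{Step 3: PL inequality.} Apply \eqref{eq:pl-inequality} at $\theta_n$, which is in the ball, to get $-\frac{\eta}{2}\norm{\nabla f(\theta_n)}^2\le-\eta\mu_{\rm PL}(f(\theta_n)-f^\ast)$. Subtracting $f^\ast$ and taking total expectations by the tower property yields the one-step contraction
\begin{align*}
\Delta_{n+1}\le(1-\eta\mu_{\rm PL})\Delta_n+\frac{L_g\eta^2\sigma_{\rm sg}^2}{2}.
\end{align*}
Here $0\le1-\eta\mu_{\rm PL}<1$. This follows because PL together with $L_g$-smoothness forces $\mu_{\rm PL}\le L_g$: apply the descent lemma from $\theta$ with step $1/L_g$ and use $f\ge f^\ast$. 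Hence $\eta\mu_{\rm PL}\le1$.

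\textbf{Step 4: unrolling.} Iterate the recursion and bound the geometric sum by $\sum_{j\ge0}(1-\eta\mu_{\rm PL})^j=1/(\eta\mu_{\rm PL})$. The stationary term is then $\frac{L_g\eta^2\sigma_{\rm sg}^2}{2}\cdot\frac{1}{\eta\mu_{\rm PL}}=\frac{\eta L_g\sigma_{\rm sg}^2}{2\mu_{\rm PL}}$, which gives \eqref{eq:sgd-linear-rate}. The final sentence of the theorem is then read off directly: a small initial gap, supplied via \Cref{prop:grid-sampled-recovery} and the value comparisons, makes the transient term small.

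\textbf{Main obstacle.} The main obstacle is justifying the use of the local hypotheses. Smoothness and PL hold only on $B_R(\theta_\eps^\ast)$, so every step relies on the standing assumption that the iterates stay in the ball. That assumption must be used both for the descent lemma on the segment and for PL at $\theta_n$. The conditioning must also be handled carefully, since this confinement is an almost-sure hypothesis on the trajectory rather than something the proof establishes.
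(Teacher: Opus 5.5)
Your proposal is correct and follows the paper's proof essentially step for step. The paper also treats only the unprojected step $\theta_{n+1}=\theta_n-\eta g_n$. It then applies the descent lemma, conditions on $\mathcal G_n$ using unbiasedness and the second-moment bound, uses $1-L_g\eta/2\ge 1/2$ together with the PL inequality, and unrolls the affine recursion using $\sum_m q^m\le 1/(\eta\mu_{\rm PL})$. Your added remark that $\mu_{\rm PL}\le L_g$, so that $0\le 1-\eta\mu_{\rm PL}<1$ as the unrolling tacitly requires, is a small point the paper leaves implicit.
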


\begin{proof}
Let $\E_n[\cdot]=\E[\cdot\mid\mathcal G_n]$.  We first write the argument for the unconstrained stochastic step $\theta_{n+1}=\theta_n-\eta g_n$.  
The descent lemma for an $L_g$-smooth function states that
\begin{align*}
    f(y)\le f(x)+\inner{\nabla f(x)}{y-x}+\frac{L_g}{2}\norm{y-x}^2,
\end{align*}
for all $x,y$ in the smoothness region; see, for example, \citet[Section~2.1]{Nesterov2018}.  Applying this with $f=\widehat{\mcJ}_{N,\eps}$, $x=\theta_n$, and $y=\theta_n-\eta g_n$, and then taking conditional expectation, gives
\begin{align*}
    \E_n\widehat{\mcJ}_{N,\eps}(\theta_{n+1})
    &\le \widehat{\mcJ}_{N,\eps}(\theta_n)
    -\eta\inner{\nabla\widehat{\mcJ}_{N,\eps}(\theta_n)}{\E_n g_n}
    +\frac{L_g\eta^2}{2}\E_n\norm{g_n}^2.
\end{align*}
Using the unbiasedness and second-moment estimates \eqref{eq:sgd-unbiased-variance}--\eqref{eq:sgd-second-moment}, we obtain
\begin{align*}
    \E_n\widehat{\mcJ}_{N,\eps}(\theta_{n+1})
    &\le \widehat{\mcJ}_{N,\eps}(\theta_n)
    -\eta\norm{\nabla\widehat{\mcJ}_{N,\eps}(\theta_n)}^2
    +\frac{L_g\eta^2}{2}\left(\norm{\nabla\widehat{\mcJ}_{N,\eps}(\theta_n)}^2+\sigma_{\rm sg}^2\right)\\
    &=\widehat{\mcJ}_{N,\eps}(\theta_n)
    -\eta\left(1-\frac{L_g\eta}{2}\right)\norm{\nabla\widehat{\mcJ}_{N,\eps}(\theta_n)}^2
    +\frac{L_g\eta^2}{2}\sigma_{\rm sg}^2.
\end{align*}
Since $\eta\le1/L_g$, the coefficient satisfies $1-L_g\eta/2\ge1/2$.  Applying the PL inequality \eqref{eq:pl-inequality} gives
\begin{align*}
    \E_n\Delta_{n+1}
    \le (1-\eta\mu_{\rm PL})\Delta_n+\frac{L_g\eta^2}{2}\sigma_{\rm sg}^2,
\end{align*}
where $\Delta_n=\widehat{\mcJ}_{N,\eps}(\theta_n)-\widehat{\mcJ}_{N,\eps}(\theta_\eps^\ast)$.  Taking total expectation gives the affine recursion
\begin{align*}
    \E\Delta_{n+1}
    \le q\,\E\Delta_n+b,
    \qquad
    q:=1-\eta\mu_{\rm PL},
    \qquad
    b:=\frac{L_g\eta^2}{2}\sigma_{\rm sg}^2.
\end{align*}
Iterating yields
\begin{align*}
    \E\Delta_n
    \le q^n\Delta_0+b\sum_{m=0}^{n-1}q^m
    \le q^n\Delta_0+\frac{b}{1-q}.
\end{align*}
Since $1-q=\eta\mu_{\rm PL}$, the steady-state term is
\begin{align*}
    \frac{b}{1-q}
    =\frac{(L_g\eta^2/2)\sigma_{\rm sg}^2}{\eta\mu_{\rm PL}}
    =\frac{\eta L_g\sigma_{\rm sg}^2}{2\mu_{\rm PL}}.
\end{align*}
This proves \eqref{eq:sgd-linear-rate}.
\end{proof}

\begin{remark}[How to choose the initialization]
\Cref{thm:sgd-initialization} suggests the following practical initialization rule.  First compute a low-resolution continuous or coarse-depth optimizer for \eqref{eq:full-continuous-value}, preferably with the regularized objective $\mcJ_\gamma$ when one wants to enforce a stable basin.  Second, convert the continuous control to a finite-depth layer sequence by grid sampling or cell averaging as in \eqref{eq:continuous-to-discrete-control}.  The deterministic recovery estimate \eqref{eq:grid-sampled-recovery-rate} in \Cref{prop:grid-sampled-recovery}, together with the value comparison \eqref{eq:empirical-to-full-value} in \Cref{thm:full-control-approximation}, controls the objective gap introduced by this continuous-to-discrete conversion.  Third, one may add a small perturbation whose norm is below the radius of the PL basin $B_R(\theta_\eps^\ast)$.  Once the initialization lies in that basin, the PL inequality \eqref{eq:pl-inequality} converts the small initial gap into the linear, or geometric, convergence estimate \eqref{eq:sgd-linear-rate}, up to the noise floor $\eta L_g\sigma_{\rm sg}^2/(2\mu_{\rm PL})$. 
\end{remark}

Without a PL or strong-convexity basin, no initialization rule depending only on $\mu_X$ can guarantee rapid convergence for a nonconvex Transformer objective.  This obstruction can already be realized inside the simplified Transformer setting of this paper.  Take
\begin{align*}
    d_x=L=d_v=d_k=2,
    \qquad N=1,
    \qquad \mu_X=\delta_{X_0},
    \qquad Y_0(X_0)=e_1,
\end{align*}
freeze all parameters except a scalar parameter $a$ in a smooth one-dimensional subfamily $\theta(a)$, and consider one residual layer, so that
\begin{align*}
    X_1^\eps(a)=X_0+\eps\zeta(0)F(X_0;\theta(a)).
\end{align*}
By freezing the attention part and using one feed-forward coordinate, or equivalently one value-output coordinate, one can realize an explicit one-dimensional reduced landscape.  Let
\begin{align*}
    e_{\rm gap}:=e_1-e_2\in\R^{d_v},
    \qquad
    s(a):=H(X_1^\eps(a))_{0,1}-H(X_1^\eps(a))_{0,2}
    =\inner{H(X_1^\eps(a))_{0,:}}{e_{\rm gap}}.
\end{align*}
Choose a vector $q\in\R^{d_x}$ with $\norm q=1$ and write $x_{0,0}:=(X_0)_{0,:}\in\R^{d_x}$ for the first token of the fixed input.  In this toy example we keep the readout in the linear form used earlier, namely
\begin{align*}
    H(X)=XW_{\mathrm{out}},
    \qquad
    W_{\mathrm{out}}=\left(\frac{q}{2},-\frac{q}{2}\right)\in\R^{d_x\times 2}.
\end{align*}
Thus, at the fixed initial input $X_0$, for the first token and the two vocabulary coordinates,
\begin{align*}
    H(X_0)_{0,1}=\frac12\inner{q}{x_{0,0}},
    \qquad
    H(X_0)_{0,2}=-\frac12\inner{q}{x_{0,0}}.
\end{align*}
Therefore the initial logit gap is
\begin{align*}
    \inner{H(X_0)_{0,:}}{e_{\rm gap}}
    =H(X_0)_{0,1}-H(X_0)_{0,2}
    =\inner{q}{x_{0,0}}.
\end{align*}
Since
\begin{align*}
    (X_{1}^{\eps})_{0,:}(a)=x_{0,0}+\eps\zeta(0)F(X_0;\theta(a))_{0,:},
\end{align*}
we obtain
\begin{align*}
    \inner{H(X_1^\eps(a))_{0,:}}{e_{\rm gap}}
    =\inner{q}{x_{0,0}}+
      \eps\zeta(0)\inner{q}{F(X_0;\theta(a))_{0,:}}.
\end{align*}
Thus, assuming for this toy one-layer example that $\zeta(0)>0$, it is enough to prescribe the selected scalar vector-field response as
\begin{align*}
    \inner{q}{F(X_0;\theta_\pm(a))_{0,:}}
    =\frac{S-\inner{q}{x_{0,0}}-\kappa(a^2-1)^2\mp\tau a+R_\eta(a)}{\eps\zeta(0)},
    \qquad
    \sup_{a\in I}\abs{R_\eta(a)}\le\eta.
\end{align*}
The preceding scalar prescription is compatible with the model vector field \eqref{eq:transformer-vector-field}.  One explicit realization is obtained by setting the attention contribution and the non-bias feed-forward contribution to zero at the chosen input, for instance by taking $W_V=0$, $W_1=0$, $W_2=0$, and choosing a smooth one-dimensional bias curve
\begin{align*}
    b_{2,\pm}(a)
    =\frac{S-\inner{q}{x_{0,0}}-\kappa(a^2-1)^2\mp\tau a+R_\eta(a)}{\eps\zeta(0)}\,q.
\end{align*}
Then
\begin{align*}
    F(X_0;\theta_\pm(a))
    =\mathbf 1_L b_{2,\pm}(a)^\top,
    \qquad
    \inner{q}{F(X_0;\theta_\pm(a))_{0,:}}
    =\frac{S-\inner{q}{x_{0,0}}-\kappa(a^2-1)^2\mp\tau a+R_\eta(a)}{\eps\zeta(0)}.
\end{align*}
Equivalently, one may freeze the feed-forward block and realize the same selected scalar response through a value-output direction: choose fixed attention weights at $X_0$ and a smooth curve in $W_V(a)W_O(a)$ so that
\begin{align*}
    \inner{q}{\bigl[A_{\theta_\pm(a)}(X_0)X_0W_V(a)W_O(a)\bigr]_{0,:}}
    =\frac{S-\inner{q}{x_{0,0}}-\kappa(a^2-1)^2\mp\tau a+R_\eta(a)}{\eps\zeta(0)}.
\end{align*}
Both constructions use only a smooth one-dimensional subfamily $a\mapsto\theta_\pm(a)$ inside the finite-dimensional Transformer parameter space; hence the double-well behavior is already present in a very low-dimensional restriction of the admissible model class.
With this choice,
\begin{align}
\label{eq:toy-realized-logit-gap}
    \inner{H(X_1^\eps(a))_{0,:}}{e_{\rm gap}}
    = \phi_\pm(a)+R_\eta(a),
\end{align}
where
\begin{align}
\label{eq:toy-double-well-logit-gap}
    \phi_\pm(a)=S-\kappa(a^2-1)^2\mp \tau a,
    \qquad
    S\gg1,
    \quad \kappa>0,
    \quad 0<\tau\ll\kappa.
\end{align}
Equivalently,
\begin{align*}
    s_\pm(a)
    :=H(X_1^\eps(a))_{0,1}-H(X_1^\eps(a))_{0,2}
    =S-\kappa(a^2-1)^2\mp\tau a+\mathcal O(\eta)
\end{align*}
uniformly for $a\in I$. 
For binary cross-entropy with the correct label $e_1$, only the logit gap matters:
\begin{align*}
    \widehat{\mcJ}_{N,\eps}^{\pm}(\theta(a))
    &= -\log\frac{e^{H(X_1^\eps(a))_{0,1}}}
        {e^{H(X_1^\eps(a))_{0,1}}+e^{H(X_1^\eps(a))_{0,2}}}       \\
    &=\log\bigl(1+e^{-s_\pm(a)}\bigr).
\end{align*}
Set $f(u)=\log(1+e^{-u})$ and write
\begin{align*}
    s_\pm(a)=S+\Delta_\pm(a),
    \qquad
    \Delta_\pm(a)=-\kappa(a^2-1)^2\mp\tau a+\mathcal O(\eta).
\end{align*}
Since
\begin{align*}
    f'(u)=-\frac{1}{1+e^u},
    \qquad
    f''(u)=\frac{e^u}{(1+e^u)^2},
\end{align*}
Taylor expansion at $S$ gives, uniformly on $I$,
\begin{align*}
    f(S+\Delta_\pm(a))
    &=f(S)+f'(S)\Delta_\pm(a)
      +\mathcal O\left(\sup_{u\in S+\Delta_\pm(I)} f''(u)\,\Delta_\pm(a)^2\right)  \\
    &=C_0+c(a^2-1)^2\pm\tau_1 a
      +\mathcal O\left(e^{-S}\bigl((a^2-1)^4+\tau^2a^2+\eta^2\bigr)+e^{-S}\eta\right),
\end{align*}
where
\begin{align*}
    C_0=f(S),
    \qquad
    c=\frac{\kappa}{1+e^S}\simeq e^{-S}\kappa>0,
    \qquad
    \tau_1=\frac{\tau}{1+e^S}\simeq e^{-S}\tau>0.
\end{align*}
Thus the two reduced losses are small perturbations of
\begin{align*}
    C_0+c(a^2-1)^2+\tau_1 a
    \qquad\text{and}\qquad
    C_0+c(a^2-1)^2-\tau_1 a.
\end{align*}
Their preferred wells lie near opposite signs of $a$: the first objective favors the well near $a=-1$, while the second favors the well near $a=1$.  Hence the same input law $\mu_X=\delta_{X_0}$ can generate two admissible one-dimensional Transformer landscapes whose good basins are separated. This is a concrete finite-dimensional version of the general nonconvex landscape phenomenon for neural-network training objectives; see, for example, \citet{GoodfellowVinyalsSaxe2015} and \citet{ChoromanskaHenaffMathieuArousLeCun2015}.  Any initialization rule depending only on $\mu_X$ assigns the same initial value in both landscapes, and therefore cannot guarantee rapid convergence to the correct basin in both problems without additional information such as a PL basin, strong convexity, or a landscape-dependent initialization.

The initialization theorem below assumes a local PL basin and a smooth empirical objective.  We first give a checkable smoothness statement for the finite-dimensional layer-parameter chart.
\begin{proposition}[Local Lipschitz gradient of the empirical finite-depth risk]
\label{prop:empirical-risk-lg-smooth}
Fix $N<\infty$ and $\eps>0$, and identify a finite-depth layer sequence with a Euclidean parameter vector $\theta\in\R^{d_{\theta}}$.  Let $B_R(\theta_\eps^\ast)$ be a ball on which the discrete trajectories remain in a compact state set and the layer parameters remain in a compact parameter set.  Assume that the Transformer vector field $F$ and the readout $H$ are twice continuously differentiable in all state and parameter variables on these compact sets, with all derivatives up to order two bounded there.  Then the empirical finite-depth objective
\begin{align*}
    \widehat{\mcJ}_{N,\eps}(\theta)
    =\frac1N\sum_{i=1}^N \ell\bigl(Y^i,H(X_M^{\eps,i}(\theta))\bigr)
\end{align*}
has an $L_g$-Lipschitz gradient on $B_R(\theta_\eps^\ast)$; that is,
\begin{align}
\label{eq:empirical-risk-lg-smooth}
    \norm{\nabla\widehat{\mcJ}_{N,\eps}(\theta)-\nabla\widehat{\mcJ}_{N,\eps}(\vartheta)}
    \le L_g\norm{\theta-\vartheta},
    \qquad \theta,\vartheta\in B_R(\theta_\eps^\ast).
\end{align}
The constant $L_g$ may depend on $T$, $\eps$, $N$ only through the fixed finite-depth chart, the compact state and parameter bounds, and the derivative bounds of $F$, $H$, and the cross-entropy loss on the corresponding bounded logit range.
\end{proposition}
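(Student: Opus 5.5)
The plan is to show that $\widehat{\mcJ}_{N,\eps}$ is $C^2$ on a neighborhood of the closed ball $\overline{B_R(\theta_\eps^\ast)}$ with a uniformly bounded Hessian. The Lipschitz gradient then follows from the mean-value inequality
\begin{align*}
    \norm{\nabla\widehat{\mcJ}_{N,\eps}(\theta)-\nabla\widehat{\mcJ}_{N,\eps}(\vartheta)}
    \le \sup_{s\in[0,1]}\norm{D^2\widehat{\mcJ}_{N,\eps}(\vartheta+s(\theta-\vartheta))}\,\norm{\theta-\vartheta}.
\end{align*}
This is legitimate because the ball is convex, so the segment stays inside it. We may therefore take $L_g$ to be this supremum. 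Since the empirical risk is a finite average over $i=1,\ldots,N$, it suffices to bound the Hessian of a single sample loss $\theta\mapsto\ell(Y^i,H(X_M^{\eps,i}(\theta)))$ uniformly in $i$ and $\theta$.

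For one sample, the terminal state is a finite composition of the one-step maps
\begin{align*}
    \Psi_k(X,\theta_k)=X+\eps\zeta(t_k)F(X;\theta_k),\qquad k=0,\ldots,M-1,
\end{align*}
followed by $H$ and then $Z\mapsto\ell(Y,Z)$. By hypothesis each $\Psi_k$ is $C^2$, with first and second derivatives bounded on the compact state and parameter sets by $1+\eps\zeta_+\sup\norm{D_XF}$, $\eps\zeta_+\sup\norm{D_\theta F}$, and $\eps\zeta_+\sup\norm{D^2F}$. By the chain rule, $X_M^{\eps}(\theta)$ is then $C^2$ in $\theta$. I would bound its derivatives by the forward recursions
\begin{align*}
    J_{k+1}&=D_X\Psi_k\,J_k+D_\theta\Psi_k\,E_k,\\
    S_{k+1}&=D_X\Psi_k\,S_k+D^2\Psi_k[(J_k,E_k),(J_k,E_k)],
\end{align*}
with $J_0=0$ and $S_0=0$. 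Here $E_k$ is the coordinate selector for $\theta_k$. A discrete Gronwall argument, as in the proof of \Cref{lem:risk-lip-control}, gives $\norm{J_k}\le C_T$ and $\norm{S_k}\le C_T$. The only inputs are the bound $(1+\eps\zeta_+L_F)^M\le e^{T\zeta_+L_F}$ and the fact that every forcing term carries a factor $\eps$, with at most $M=T/\eps$ such terms.

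It remains to control the cross-entropy factors. By \eqref{eq:ce-gradient}, $\nabla_Z\ell$ is bounded by $2/\sqrt L$ for every $Z$. The Hessian $L^{-1}(\operatorname{diag}(p)-pp^\top)$ is bounded in operator norm by $1/L$ uniformly in $Z$, so no logit-range restriction is even needed. Combining these with the bounds on $H$, $DH$ and $D^2H$ on the compact state set, the chain rule
\begin{align*}
    D^2(\ell\circ H\circ X_M)=D^2\ell[DH\,J,DH\,J]+\inner{\nabla\ell}{D^2H[J,J]+DH\,S}
\end{align*}
yields a uniform Hessian bound. This bound depends only on the compact bounds and derivative bounds listed in the statement.

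The main obstacle is a justification issue rather than an estimate. The recursions for $J_k$ and $S_k$ require every trajectory with parameter on a segment in $B_R$ to stay in the compact set where the derivative bounds hold. This is exactly the hypothesis of the statement, and Proposition \ref{prop:uniform-local-compactness} supplies it concretely. Apart from that, the work is bookkeeping of the $\eps$-factors, which is what keeps $L_g$ bounded as $M$ grows.
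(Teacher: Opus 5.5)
Your proposal is correct and follows essentially the same route as the paper: a per-sample $C^2$ composition argument, first- and second-order sensitivity recursions bounded by discrete Gronwall, and the chain rule with bounded cross-entropy derivatives to get a uniform Hessian bound, followed by the mean-value theorem on the convex ball. Your two extra observations are valid refinements of the paper's bookkeeping: the softmax cross-entropy Hessian is bounded globally in $Z$, and the $\eps$-factors make $L_g$ uniform in $M$ on a common compact tube.
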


\begin{proof}
For a fixed sample $i$, write $X_k^i(\theta)$ for the discrete trajectory.  The recursion
\begin{align*}
    X_{k+1}^i(\theta)=X_k^i(\theta)+\eps\zeta(t_k)F(X_k^i(\theta);\theta_k)
\end{align*}
is a finite composition of $C^2$ maps on compact sets.  Therefore the parameter-to-state map $\theta\mapsto X_M^i(\theta)$ is $C^2$ on $B_R(\theta_\eps^\ast)$.  More explicitly, differentiating the recursion once gives a linear sensitivity recursion of the form
\begin{align*}
    D_\theta X_{k+1}^i
    &=D_\theta X_k^i
      +\eps\zeta(t_k)D_XF(X_k^i;\theta_k)D_\theta X_k^i
      +\eps\zeta(t_k)D_\theta F(X_k^i;\theta_k)\Pi_k,
\end{align*}
where $\Pi_k$ extracts the $k$-th layer parameter from the full layer vector.  Since $D_XF$ and $D_\theta F$ are bounded on the compact set, a discrete Gronwall estimate gives
\begin{align*}
    \sup_{\theta\in B_R(\theta_\eps^\ast)}\norm{D_\theta X_M^i(\theta)}\le C_1.
\end{align*}
Differentiating the same recursion a second time gives an affine recursion for $D_\theta^2X_k^i$ whose coefficients contain only $D_XF$, $D_\theta F$, $D_{XX}^2F$, $D_{X\theta}^2F$, and $D_{\theta\theta}^2F$, all bounded on the compact set.  Applying discrete Gronwall again yields
\begin{align*}
    \sup_{\theta\in B_R(\theta_\eps^\ast)}\norm{D_\theta^2 X_M^i(\theta)}\le C_2.
\end{align*}
The sample loss
\begin{align*}
    f_i(\theta)=\ell\bigl(Y^i,H(X_M^i(\theta))\bigr)
\end{align*}
is also $C^2$.  The softmax cross-entropy has bounded first and second derivatives on bounded logit sets, and $H$ has bounded first and second derivatives on the compact state set.  By the chain rule,
\begin{align*}
    \norm{\nabla^2 f_i(\theta)}
    \le C\left(1+\norm{D_\theta X_M^i(\theta)}^2+\norm{D_\theta^2X_M^i(\theta)}\right)
    \le C_3,
\end{align*}
for all $\theta\in B_R(\theta_\eps^\ast)$, with $C_3$ independent of $i$.  Since
\begin{align*}
    \nabla^2\widehat{\mcJ}_{N,\eps}(\theta)=\frac1N\sum_{i=1}^N\nabla^2 f_i(\theta),
\end{align*}
we have $\sup_{B_R}\norm{\nabla^2\widehat{\mcJ}_{N,\eps}}\le C_3$.  The mean-value theorem then gives \eqref{eq:empirical-risk-lg-smooth} with $L_g=C_3$.
\end{proof}

Smoothness alone does not imply a PL inequality.  The next proposition explains how local Hessian coercivity on identifiable directions yields the PL condition used in the SGD basin argument.
\begin{proposition}[A checkable sufficient condition for the local PL inequality]
\label{prop:empirical-risk-pl-sufficient}
Let $\theta_\eps^\ast$ be a local minimizer of $\widehat{\mcJ}_{N,\eps}$ in a Euclidean finite-depth parameter chart.  Assume that the local smoothness conclusion of Proposition \ref{prop:empirical-risk-lg-smooth} holds on $B_R(\theta_\eps^\ast)$.  Let $\mathcal T_{\rm id}$ denote the tangent space of the identifiable local chart obtained after quotienting out exact flat symmetry directions.\footnote{Here an exact flat symmetry direction means a parameter variation that leaves the realized input-output map, and therefore the empirical objective, unchanged in a neighborhood.}  Assume that, for some $\lambda>0$,
\begin{align}
\label{eq:local-hessian-coercive-empirical}
    v^\top \nabla^2\widehat{\mcJ}_{N,\eps}(\theta)v
    \ge \lambda \norm{v}^2,
    \qquad
    \theta\in B_R(\theta_\eps^\ast),\quad v\in \mathcal T_{\rm id}.
\end{align}
Then, on the same identifiable local chart, $\widehat{\mcJ}_{N,\eps}$ satisfies the PL inequality
\begin{align}
\label{eq:empirical-risk-local-pl-derived}
    \frac12\norm{\nabla\widehat{\mcJ}_{N,\eps}(\theta)}^2
    \ge \mu_{\rm PL}\bigl(\widehat{\mcJ}_{N,\eps}(\theta)-\widehat{\mcJ}_{N,\eps}(\theta_\eps^\ast)\bigr),
    \qquad
    \mu_{\rm PL}=\frac{\lambda^2}{L_g},
\end{align}
for all $\theta\in B_R(\theta_\eps^\ast)$ for which the segment between $\theta$ and $\theta_\eps^\ast$ stays in the chart.  In particular, \eqref{eq:pl-inequality} holds in any nondegenerate local basin where the empirical Hessian is uniformly positive on the identifiable directions.
\end{proposition}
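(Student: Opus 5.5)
The plan is the standard ``strong convexity on a segment implies PL'' argument, carried out on the identifiable chart. Write $f=\widehat{\mcJ}_{N,\eps}$ and fix $\theta\in B_R(\theta_\eps^\ast)$ such that the segment $\theta_s=\theta_\eps^\ast+s(\theta-\theta_\eps^\ast)$, $s\in[0,1]$, lies in the chart. Set $v=\theta-\theta_\eps^\ast$. Working in the identifiable chart means we treat $v\in\mathcal T_{\rm id}$, having quotiented out the exact flat directions. This is legitimate because $f$ is constant along those directions, so neither $f$ nor the component of $\nabla f$ along $\mathcal T_{\rm id}$ changes.

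Since $\theta_\eps^\ast$ is an interior local minimizer, $\nabla f(\theta_\eps^\ast)=0$. The argument then needs two one-dimensional estimates along the segment.

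\emph{Upper bound on the gap.} By Taylor's formula with integral remainder and $L_g$-smoothness (Proposition \ref{prop:empirical-risk-lg-smooth}),
\begin{align*}
f(\theta)-f(\theta_\eps^\ast)=\int_0^1(1-s)\,v^\top\nabla^2 f(\theta_s)v\,\dd s\le \frac{L_g}{2}\norm{v}^2 .
\end{align*}

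\emph{Lower bound on the gradient.} By the fundamental theorem of calculus,
\begin{align*}
\inner{\nabla f(\theta)}{v}=\inner{\nabla f(\theta)-\nabla f(\theta_\eps^\ast)}{v}=\int_0^1 v^\top\nabla^2 f(\theta_s)v\,\dd s\ge \lambda\norm{v}^2 ,
\end{align*}
using the coercivity hypothesis \eqref{eq:local-hessian-coercive-empirical} at every point of the segment. Cauchy--Schwarz then gives $\norm{\nabla f(\theta)}\ge\lambda\norm{v}$.

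Combining the two bounds yields
\begin{align*}
\frac12\norm{\nabla f(\theta)}^2\ge\frac{\lambda^2}{2}\norm{v}^2\ge \frac{\lambda^2}{L_g}\bigl(f(\theta)-f(\theta_\eps^\ast)\bigr),
\end{align*}
which is \eqref{eq:empirical-risk-local-pl-derived} with $\mu_{\rm PL}=\lambda^2/L_g$. One could also use the coercive lower bound $f(\theta)-f(\theta_\eps^\ast)\ge\frac{\lambda}{2}\norm{v}^2$, but the stated constant comes out directly from the smoothness upper bound, so that is the route to take.

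The main obstacle is justifying the reduction to $\mathcal T_{\rm id}$. The Hessian bound only holds on identifiable directions, so the difference $v$ must be tangent to the chart. I would handle this by parametrizing the identifiable chart as an affine slice through $\theta_\eps^\ast$ with direction space $\mathcal T_{\rm id}$, and by interpreting $\nabla f$ as the gradient of the restriction of $f$ to that slice. The full gradient has norm at least as large as this restricted gradient, so the PL inequality stated for the full gradient follows as well. The requirement that the segment stays in the chart is exactly what makes the two integral identities valid.
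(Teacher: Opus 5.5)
Your proposal is correct and follows essentially the same route as the paper. The fundamental theorem of calculus plus Hessian coercivity along the segment gives $\norm{\nabla\widehat{\mcJ}_{N,\eps}(\theta)}\ge\lambda\norm{\theta-\theta_\eps^\ast}$, the $L_g$-smoothness upper bound gives $\widehat{\mcJ}_{N,\eps}(\theta)-\widehat{\mcJ}_{N,\eps}(\theta_\eps^\ast)\le\frac{L_g}{2}\norm{\theta-\theta_\eps^\ast}^2$, and combining the two yields $\mu_{\rm PL}=\lambda^2/L_g$; your affine-slice discussion just makes explicit the paper's standing assumption $\theta-\theta_\eps^\ast\in\mathcal T_{\rm id}$.
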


\begin{proof}
Let $\delta=\theta-\theta_\eps^\ast\in\mathcal T_{\rm id}$. Since $\theta_\eps^\ast$ is a local minimizer in the identifiable chart, $\nabla\widehat{\mcJ}_{N,\eps}(\theta_\eps^\ast)=0$ on the identifiable tangent directions.  By the fundamental theorem of calculus and \eqref{eq:local-hessian-coercive-empirical},
\begin{align*}
    \inner{\nabla\widehat{\mcJ}_{N,\eps}(\theta)}{\delta}
    &=\int_0^1 \delta^\top
      \nabla^2\widehat{\mcJ}_{N,\eps}(\theta_\eps^\ast+s\delta)\delta\,\dd s
      \ge \lambda\norm{\delta}^2.
\end{align*}
Hence Cauchy's inequality gives
\begin{align}
\label{eq:gradient-distance-lower-bound}
    \norm{\nabla\widehat{\mcJ}_{N,\eps}(\theta)}\ge \lambda\norm{\theta-\theta_\eps^\ast}.
\end{align}
On the other hand, the $L_g$-Lipschitz gradient bound implies the standard upper quadratic estimate around the stationary point,
\begin{align}
\label{eq:objective-gap-upper-smoothness}
    \widehat{\mcJ}_{N,\eps}(\theta)-\widehat{\mcJ}_{N,\eps}(\theta_\eps^\ast)
    \le \frac{L_g}{2}\norm{\theta-\theta_\eps^\ast}^2.
\end{align}
Combining \eqref{eq:gradient-distance-lower-bound} and \eqref{eq:objective-gap-upper-smoothness} yields
\begin{align*}
    \frac12\norm{\nabla\widehat{\mcJ}_{N,\eps}(\theta)}^2
    \ge \frac{\lambda^2}{2}\norm{\theta-\theta_\eps^\ast}^2
    \ge \frac{\lambda^2}{L_g}
      \bigl(\widehat{\mcJ}_{N,\eps}(\theta)-\widehat{\mcJ}_{N,\eps}(\theta_\eps^\ast)\bigr),
\end{align*}
which is \eqref{eq:empirical-risk-local-pl-derived}.  The tangent-space qualification is necessary in the Transformer setting since parameter symmetries, redundant heads, and factorized value-output directions can create exact null directions.  The PL conclusion is therefore a local nondegeneracy condition on the identifiable directions, not a global convexity statement.
\end{proof}

We finally record range estimates that justify restricting the search for regularized minimizers to bounded parameter sets.  These estimates complement the earlier compactness assumptions by giving one way to obtain them from the objective itself.
\begin{theorem}[A priori range estimate for regularized global minimizers]
\label{thm:minimizer-range}
Assume $K_{\rm in}=\operatorname{supp}\mu_X$ is compact and consider the regularized continuous objective
\begin{align*}
    \mcJ_\gamma(\theta)=\mcJ(\theta)+\frac{\gamma}{2}\int_0^T\norm{\theta_t}^2\dd t,
    \qquad \gamma>0.
\end{align*}
Let $\theta^\ast$ be a global minimizer of $\mcJ_\gamma$ in a class containing the zero control.  Then
\begin{align}
\label{eq:l2-range-control}
    \int_0^T\norm{\theta_t^\ast}^2\dd t
    \le \frac{2}{\gamma}\mcJ_\gamma(0).
\end{align}
If, in addition, admissible controls are $L_{\calU}$-Lipschitz in time, then every global minimizer satisfies the pointwise range estimate
\begin{align}
\label{eq:pointwise-range-control}
    \sup_{0\le t\le T}\norm{\theta_t^\ast}
    \le \sqrt{\frac{2\mcJ_\gamma(0)}{\gamma T}}+L_{\calU}T.
\end{align}
\end{theorem}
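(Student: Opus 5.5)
The $L^2$ bound \eqref{eq:l2-range-control} comes from comparing the minimizer with the zero control. Since the zero control is admissible and $\theta^\ast$ is a global minimizer of $\mcJ_\gamma$, we have $\mcJ_\gamma(\theta^\ast)\le\mcJ_\gamma(0)$. The unregularized part satisfies $\mcJ(\theta^\ast)\ge0$, because the cross-entropy loss is nonnegative (Assumption \ref{ass:readout}, or directly from \eqref{eq:ce-loss}, since $-\log\softmax_j\ge0$). Hence
\begin{align*}
    \frac{\gamma}{2}\int_0^T\norm{\theta_t^\ast}^2\dd t\le \mcJ_\gamma(\theta^\ast)\le\mcJ_\gamma(0),
\end{align*}
which is \eqref{eq:l2-range-control}. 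Note that $\mcJ_\gamma(0)=\mcJ(0)$ is finite: by Proposition \ref{prop:uniform-local-compactness}, compactness of $K_{\rm in}$ keeps the zero-control trajectories in a compact tube, and the loss is bounded there by $B_\ell$. In particular $\mcJ_\gamma(0)\le B_\ell$, which makes the bound explicit.

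For the pointwise estimate \eqref{eq:pointwise-range-control}, I would combine this $L^2$ bound with the Lipschitz bound in time. By the mean-value property of the integral, or simply because $\min_t\norm{\theta_t^\ast}^2\le T^{-1}\int_0^T\norm{\theta^\ast_t}^2\dd t$, there is a time $t_0\in[0,T]$ at which
\begin{align*}
    \norm{\theta_{t_0}^\ast}\le\Bigl(\frac1T\int_0^T\norm{\theta_t^\ast}^2\dd t\Bigr)^{1/2}\le\sqrt{\frac{2\mcJ_\gamma(0)}{\gamma T}}.
\end{align*}
Continuity of $t\mapsto\norm{\theta_t^\ast}$, which follows from Lipschitz continuity, guarantees that the minimum is attained. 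Then for any $t\in[0,T]$ the Lipschitz bound gives $\norm{\theta_t^\ast}\le\norm{\theta_{t_0}^\ast}+L_{\calU}\abs{t-t_0}\le\norm{\theta_{t_0}^\ast}+L_{\calU}T$. Taking the supremum over $t$ yields \eqref{eq:pointwise-range-control}.

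The argument is elementary and I expect no real obstacle. The only points needing care are the finiteness of $\mcJ_\gamma(0)$, which requires the zero control to generate trajectories in the region where the loss is bounded, and the nonnegativity of $\mcJ$. Both follow from the standing assumptions on compact input support and from the form of the cross-entropy loss.
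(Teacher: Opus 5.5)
Your proposal is correct and follows the paper's proof essentially step for step. You compare with the zero control and use nonnegativity of the cross-entropy to get the $L^2$ bound, then choose a time where $\norm{\theta^\ast_t}^2$ is at most its average and propagate from there with the Lipschitz constant. Your side remark that $\mcJ_\gamma(0)\le B_\ell$ is not needed, since the inequality is trivial if $\mcJ_\gamma(0)=\infty$; it also relies on the bounded-field and bounded-loss hypotheses of Proposition \ref{prop:uniform-local-compactness} and Assumption \ref{ass:readout} rather than on the theorem's own assumptions, but it does no harm.
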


\begin{proof}
Since $\theta^\ast$ minimizes $\mcJ_\gamma$ and the loss is nonnegative,
\begin{align*}
    \frac{\gamma}{2}\int_0^T\norm{\theta_t^\ast}^2\dd t
    \le \mcJ_\gamma(\theta^\ast)
    \le \mcJ_\gamma(0),
\end{align*}
which proves \eqref{eq:l2-range-control}. Let
\begin{align*}
    A:=T^{-1}\int_0^T\norm{\theta_t^\ast}^2\dd t.
\end{align*}
There must exist some $s\in[0,T]$ with $\norm{\theta_s^\ast}^2\le A$; otherwise $\norm{\theta_t^\ast}^2>A$ for every $t$, and integrating would give $\int_0^T\norm{\theta_t^\ast}^2\dd t>TA$, a contradiction.  Hence
\begin{align*}
    \norm{\theta_s^\ast}\le T^{-1/2}\left(\int_0^T\norm{\theta_t^\ast}^2\dd t\right)^{1/2}.
\end{align*}  For any $t\in[0,T]$,
\begin{align*}
    \norm{\theta_t^\ast}
    \le \norm{\theta_s^\ast}+L_{\calU}\abs{t-s}
    \le \sqrt{\frac{2\mcJ_\gamma(0)}{\gamma T}}+L_{\calU}T.
\end{align*}
This proves \eqref{eq:pointwise-range-control}.
\end{proof}

\begin{remark}[Estimating the range from $\mu_X$]
The quantity $\mcJ_\gamma(0)$ is computable or estimable from the input distribution.  Under the zero control, the terminal state is determined by the flow with $\theta_t\equiv0$; if the corresponding logits are bounded on the compact input support, then
\begin{align*}
    \mcJ_\gamma(0)=\E_{X_0\sim\mu_X}\ell(Y(X_0),H(X_T^0(X_0)))
\end{align*}
where $X_T^0(X_0)$ denotes the terminal state generated by the zero control $\theta_t\equiv0$.
If the parameter origin is chosen so that $F(X;0)=0$ on the relevant state set, then $X_T^0(X_0)=X_0$. 
This quantity can be estimated by Monte Carlo samples from $\mu_X$.  The compact support of $\mu_X$ and the boundedness assumptions on $F$ and $H$ ensure this estimate is finite and uniform.  Therefore \eqref{eq:pointwise-range-control} gives an explicit data-dependent radius for the parameter search region.  Without compactness, regularization, or a prescribed admissible compact set $\Theta$, such a range estimate is generally impossible for a noncoercive overparameterized Transformer objective.  For example, take the toy dimensions $d_x=L=d_v=d_k=2$.  Fix compactly supported inputs and consider the one-parameter family in which all parameters except the output matrix $W_O$ are fixed, while the value matrix is set to $W_V=0$.  Then
\begin{align*}
    V=XW_V=0,
    \qquad
    A_\theta(X)VW_O=A_\theta(X)0\,W_O=0
\end{align*}
for every $X$ and for every $W_O\in\R^{2\times2}$.  If the feed-forward part and the readout are fixed so that the terminal loss is minimized independently of this unused attention-output direction, then
\begin{align*}
    \mcJ(\theta(W_O))=\mcJ(\theta(0))
    \qquad\text{for all }W_O\in\R^{2\times2}.
\end{align*}
Consequently, for every radius $R>0$ one can choose $W_O$ with $\norm{W_O}>R$ and obtain the same objective value.  This gives an unbounded family of minimizers with the same input law and the same loss value.  A finite range estimate cannot be derived from $\mu_X$ alone; one must add a compact parameter constraint, a coercive regularizer such as $\gamma\int_0^T\norm{\theta_t}^2\dd t$, or an identifiability condition that removes unused parameter directions.
\end{remark}

The Pontryagin condition gives a sharper pointwise estimate under additional structure adapted to the Transformer vector field.  Suppose that the regularized optimum is an interior point of the admissible set, that the regularization is added as $\frac{\gamma}{2}\int_0^T\norm{\theta_t}^2\dd t$, and that, on the compact tube $K_T$, the Transformer derivatives satisfy
\begin{align}
\label{eq:transformer-dtheta-bound}
    \norm{D_\theta F(X;\theta)^\top P}\le C_{F,\theta}\norm{P}
    \qquad X\in K_T,
\end{align}
for all relevant $\theta$.  For the simplified attention/FFN field \eqref{eq:transformer-vector-field}, such a bound follows from boundedness of the hidden states, boundedness of the parameter set, smoothness of the masked softmax on finite rows, and bounded derivatives of $\sigma$.  The regularized stationarity condition becomes
\begin{align}
\label{eq:regularized-pmp-stationarity}
    \gamma\theta_t^\ast+\E\left[D_\theta F(X_t^\ast;\theta_t^\ast)^\top P_t^\ast\right]=0.
\end{align}
If the adjoint is uniformly bounded (see Proposition \ref{prop:uniform-adjoint-bound}), $\norm{P_t^\ast}\le B_P$, then \eqref{eq:transformer-dtheta-bound}--\eqref{eq:regularized-pmp-stationarity} imply the pointwise estimate
\begin{align}
\label{eq:pmp-pointwise-range}
    \norm{\theta_t^\ast}\le \frac{C_{F,\theta}B_P}{\gamma}
    \qquad \text{for a.e. }t\in[0,T].
\end{align}
This bound can be sharper than \eqref{eq:pointwise-range-control} since it uses the optimality system rather than converting an $L^2$-bound into an $L^\infty$-bound through a Lipschitz-in-time assumption.
Thus the search for a global minimizer can be restricted to the compact parameter ball with radius given by the right-hand side of \eqref{eq:pointwise-range-control}, intersected with the original parameter constraint set.

The sharper pointwise range estimate used above relies on a uniform bound for the adjoint variable.  The following proposition derives such a bound directly from the terminal condition and Gronwall's inequality.
\begin{proposition}[Uniform adjoint bound]
\label{prop:uniform-adjoint-bound}
Assume that $H$ is continuously differentiable on the compact tube $K_T$ and that $D_XF$ is uniformly bounded there: there are constants $B_{DH},B_{DXF}<\infty$ such that
\begin{align*}
    \norm{DH(X)}\le B_{DH},
    \qquad
    \norm{D_XF(X;\theta)}\le B_{DXF},
    \qquad X\in K_T,
\end{align*}
for all relevant $\theta$.  Then every adjoint trajectory solving \eqref{eq:pmp-adjoint} satisfies
\begin{align}
\label{eq:uniform-adjoint-bound}
    \norm{P_t^\ast}\le B_P
    :=\frac{2B_{DH}}{\sqrt L}\exp(\zeta_+B_{DXF}T),
    \qquad 0\le t\le T.
\end{align}
\end{proposition}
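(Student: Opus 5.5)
The plan is a terminal bound on $P_T^\ast$ combined with a backward Gronwall estimate for the linear adjoint equation \eqref{eq:pmp-adjoint}. The constant $B_P$ factors exactly into these two pieces. The prefactor $2B_{DH}/\sqrt L$ is the terminal bound, and $\exp(\zeta_+B_{DXF}T)$ is the Gronwall growth factor over the whole depth interval.

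\emph{Terminal bound.} By \eqref{eq:pmp-terminal},
\[
P_T^\ast=DH(X_T^\ast)^\top\nabla_Z\ell\bigl(Y(X_0),H(X_T^\ast)\bigr).
\]
The proof of \Cref{lem:ce-lip} already shows $\norm{\nabla_Z\ell(Y,Z)}\le 2/\sqrt L$ for every $Y\in\DeltaV^L$ and every $Z$. This holds because each row is $L^{-1}(\softmax(Z_{l,:})-Y_l)$, a difference of probability vectors scaled by $1/L$. Since $X_T^\ast$ lies in $K_T$ by \Cref{prop:uniform-local-compactness}, the operator norm of $DH(X_T^\ast)^\top$ is at most $B_{DH}$. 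Therefore $\norm{P_T^\ast}\le 2B_{DH}/\sqrt L$. No boundedness of the logits is needed here, since the cross-entropy gradient is bounded globally.

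\emph{Backward propagation.} Reverse time by setting $Q_s=P_{T-s}^\ast$. Then $Q$ satisfies
\[
\dot Q_s=\zeta(T-s)D_XF(X_{T-s}^\ast;\theta_{T-s}^\ast)^\top Q_s .
\]
The map $s\mapsto\norm{Q_s}^2$ is absolutely continuous, and
\[
\tfrac{d}{ds}\norm{Q_s}^2=2\inner{Q_s}{\dot Q_s}\le 2\zeta_+B_{DXF}\norm{Q_s}^2 .
\]
This uses $0\le\zeta\le\zeta_+$ from Assumption \ref{ass:controls}, the fact that the transpose has the same operator norm, and $X_t^\ast\in K_T$. Gronwall's inequality then gives
\[
\norm{P_t^\ast}\le \norm{P_T^\ast}\,e^{\zeta_+B_{DXF}(T-t)}\le \norm{P_T^\ast}\,e^{\zeta_+B_{DXF}T}.
\]
Combining this with the terminal bound yields \eqref{eq:uniform-adjoint-bound}. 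Working with $\norm{Q_s}^2$ rather than $\norm{Q_s}$ avoids nondifferentiability at $Q_s=0$. Alternatively, one can apply the integral form of Gronwall to $\norm{Q_s}\le\norm{Q_0}+\int_0^s\zeta_+B_{DXF}\norm{Q_r}\dd r$.

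\emph{Main obstacle.} The only delicate point is justifying that the whole state trajectory $X_t^\ast$ stays in $K_T$, so that the bounds on $DH$ and $D_XF$ apply along it. This follows from \Cref{prop:uniform-local-compactness}, given compact input support and controls valued in $\Theta$. One also needs existence and uniqueness of the adjoint solution under merely measurable controls. This holds because the adjoint equation is linear with bounded measurable coefficients, so a Carath\'eodory solution exists and is unique. The resulting bound is uniform in $X_0$ and in the admissible control.
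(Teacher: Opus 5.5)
Your proposal is correct and follows the paper's proof: the terminal bound $\norm{P_T^\ast}\le 2B_{DH}/\sqrt L$ comes from the global bound on the cross-entropy gradient, and a backward Gronwall estimate on the linear adjoint equation supplies the factor $\exp(\zeta_+B_{DXF}T)$. Your time reversal $Q_s=P_{T-s}^\ast$ and your use of $\norm{Q_s}^2$ make the direction of the Gronwall step explicit and avoid the nondifferentiability at $Q_s=0$, both of which the paper's one-line differential inequality for $\norm{P_s^\ast}$ glosses over; you also rightly point out that the argument implicitly needs $X_t^\ast\in K_T$, which the paper uses without comment.
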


\begin{proof}
By \eqref{eq:ce-gradient}, $\norm{\nabla_Z\ell(Y,Z)}\le 2/\sqrt L$ for all $Y\in\DeltaV^L$.  Hence the terminal condition in \eqref{eq:pmp-adjoint} gives
\begin{align*}
    \norm{P_T^\ast}
    \le \norm{DH(X_T^\ast)}\,\norm{\nabla_Z\ell(Y(X_0),H(X_T^\ast))}
    \le \frac{2B_{DH}}{\sqrt L}.
\end{align*}
The adjoint equation implies, for $t\le s\le T$,
\begin{align*}
    \frac{\dd}{\dd s}\norm{P_s^\ast}
    \le \zeta(s)\norm{D_XF(X_s^\ast;\theta_s^\ast)}\norm{P_s^\ast}
    \le \zeta_+B_{DXF}\norm{P_s^\ast}.
\end{align*}
Applying Gronwall backward from $T$ to $t$ yields
\begin{align*}
    \norm{P_t^\ast}
    \le \norm{P_T^\ast}\exp(\zeta_+B_{DXF}(T-t))
    \le \frac{2B_{DH}}{\sqrt L}\exp(\zeta_+B_{DXF}T),
\end{align*}
which proves \eqref{eq:uniform-adjoint-bound}.
\end{proof}

\section{Discussion, extensions, and conclusion}
\label{sec:discussion-conclusion}

\subsection{What has been proved}

The main purpose of this paper is to separate, in a mathematically explicit way, three sources of error that coexist in finite Transformer training.  The first source is depth discretization: the residual recursion \eqref{eq:discrete-transformer} is an explicit Euler approximation of the continuous controlled flow \eqref{eq:continuous-transformer}, leading to the pathwise estimate \eqref{eq:pathwise-error} and the population discretization bound \eqref{eq:pop-disc-error}.  The second source is statistical sampling: the fixed-control concentration estimate \eqref{eq:fixed-control-error}, the finite-class estimate \eqref{eq:finite-class-error}, and the metric-entropy estimate \eqref{eq:metric-entropy-error} quantify how empirical risk approximates population risk. Among these, the fixed-control estimate has the most direct interpretation.  The finite-class and metric-entropy bounds are uniform over fixed admissible classes and should not be read as a complete generalization theory for a Transformer chosen adaptively by SGD from the same sample. The third source is control-class approximation: the comparison bounds \eqref{eq:value-comparison}, \eqref{eq:approx-minimizer}, and \eqref{eq:empirical-to-full-value} explain how finite-depth empirical optima relate to continuous-depth population optima.  These estimates show that the continuous model is not merely a formal limit; it gives a quantitative baseline for comparing finite residual Transformers with a population control problem.

The limiting object is a first-order mean field control problem for the law of hidden states along depth.  In this formulation, the control is the layer-parameter curve, the state is the population distribution $\rho_t^\theta$, and the terminal loss is the token-level cross-entropy evaluated along characteristics. This is different from mean-field neural-network limits in which the empirical measure is taken over neurons, heads, or parameters; here the transported measure is the data-induced hidden-state law under a common control. The Pontryagin condition in \Cref{prop:pmp} identifies the cross-entropy softmax residual as the terminal adjoint signal.  This is the continuous-depth counterpart of the backpropagated classification error in a finite Transformer.

\subsection{Meaning of the assumptions}

The assumptions are deliberately stated in compact, local form.  They are not intended to say that every practical Transformer trajectory is automatically compact or globally smooth.  Rather, they identify what is needed for the Euler, concentration, and control-stability arguments to be valid.  Compactness can be enforced by projection, obtained conditionally through a priori range estimates, or motivated by normalized architectures.  In particular, layer normalization maps each token to a bounded subset of the mean-zero subspace, as discussed in \Cref{sec:assumptions}.  Regularization and compact control classes provide another route to range control, as shown in \Cref{thm:minimizer-range}.

The simplified vector field \eqref{eq:transformer-vector-field} should be read as a representative residual block rather than as the only architecture covered by the method.  Multi-head attention, feed-forward blocks, smooth layer-normalization approximations, trainable readouts, and residual gates can be handled by the same proof strategy whenever the resulting block map satisfies the boundedness and Lipschitz estimates used in Assumptions \ref{ass:controls}--\ref{ass:theta-lip}.  For example, in a multi-head attention architecture, if
\begin{align*}
    \widetilde F(X;\widetilde\theta)=\sum_{m=1}^{M_0}G_m(X;\widetilde\theta_m)
\end{align*}
and each component $G_m$ is bounded and Lipschitz on the relevant compact set, then $\widetilde F$ is also bounded and Lipschitz, with constants bounded by the sums of the component constants.  For finite compositions of smooth components, the same conclusion follows from the chain rule and compactness.  Thus the present analysis extends to many finite architectural modifications after changing constants, although not to singular or unbounded regimes without additional estimates.

\subsection{Optimization and nonconvexity}

The paper does not claim global convexity of Transformer training.  On the contrary, the local stability and initialization results are formulated to make nonconvexity explicit.  The quadratic-growth and PL assumptions are local basin conditions. They are not derived from the Transformer architecture or from cross-entropy alone; Example \ref{ex:frozen-feature-readout-qg} gives one restricted frozen-feature readout setting where such assumptions can be checked, while the general Transformer case requires separate verification on the basin of interest. The toy construction in \Cref{sec:initialization-range} shows that even a one-dimensional subfamily of the simplified Transformer model can produce separated nonconvex basins under cross-entropy loss.  Therefore the local SGD estimate \eqref{eq:sgd-linear-rate} should be interpreted as a basin-stability result: once an initialization lies in a suitable PL neighborhood, stochastic gradient descent decreases the objective gap geometrically up to a noise floor.  It is not a theorem guaranteeing global discovery of that neighborhood.

A global initialization statement would require an additional basin-discovery mechanism.  One simple formulation is probabilistic.  Let $B_R(\theta_\eps^\ast)$ be a local PL basin and let $\nu_0$ be a distribution over initial layer sequences.  If
\begin{align*}
    p_R:=\nu_0\bigl(B_R(\theta_\eps^\ast)\bigr)>0,
\end{align*}
then $K$ independent restarts hit the basin with probability
\begin{align*}
    1-(1-p_R)^K.
\end{align*}
Conditioned on this event, \eqref{eq:sgd-linear-rate} gives the local convergence rate.  This separation between global basin discovery and local basin convergence is important for interpreting continuous-control initializations: a continuous solve, grid sampling, or coarse-to-fine procedure may provide useful initial candidates, but a separate landscape argument is needed to ensure that such candidates enter a good basin uniformly.

\subsection{Extensions}

Several extensions are natural.  First, the statistical estimates in this paper are intentionally elementary.  The finite-class and metric-entropy bounds can likely be sharpened for structured Transformer control classes using temporal regularity, parameter sharing, sparsity, PAC-Bayesian arguments, or data-dependent complexity.  A more practical generalization theory for trained Transformers would also need to account for the data-dependent choice of the final parameters, for example through algorithmic stability of SGD, compression, localized Rademacher complexity, or another complexity measure adapted to the trajectory of training. Such refinements would affect the sampling term but would not change the Euler part of the analysis.

Second, the present state equation uses a single residual stream with a scalar depth profile $\zeta(t)$.  This captures a residual strength but not the full structure of hyper-connections, multiple streams, or manifold-constrained mixing.  A more faithful continuous-depth model of modern Transformer variants would replace the single hidden-state ODE by a coupled system of residual streams with matrix-valued or manifold-constrained connection operators.  The estimates in this paper suggest what must be controlled in such a model: boundedness of the coupled vector field, Lipschitz dependence on states and controls, and stable readout behavior.

Third, stochastic training dynamics can be incorporated more directly.  The current limiting equation is a deterministic transport equation for the hidden-state law.  Adding gradient noise, dropout-type randomness, or explicit diffusion would lead to stochastic-control or Fokker--Planck variants.  This could connect the deterministic mean field control picture with implicit regularization and noisy optimization, while preserving the distinction between depth discretization, sampling, and optimization error.

Fourth, noncompact parameter regimes remain an important open problem.  The present range estimates show how compactness can follow from regularization or Lipschitz-in-time control classes, but large-scale training often uses weaker constraints.  Extending the theory to moment bounds, coercive losses, adaptive normalization, or data-dependent localization would make the framework closer to practical training regimes.

\subsection{Conclusion}

This work provides a rigorous continuous-depth control baseline for Transformer-type residual layers trained by cross-entropy.  Under explicit compactness and smoothness assumptions, finite residual layers approximate a controlled hidden-state flow, empirical cross-entropy risk approximates the population objective, and finite-depth optima can be compared with continuous-control optima.  The resulting mean field control formulation also yields a Pontryagin condition, clarifies the role of the softmax residual in the terminal adjoint, and gives local stability and initialization criteria for finite-depth training.

The main value of the framework is conceptual separation.  Discretization error, sampling error, approximation of the admissible control class, and nonconvex optimization are different issues and require different tools.  By isolating these mechanisms, the paper provides a foundation for sharper generalization estimates, stochastic training limits, noncompact control theory, and more faithful continuous models of full Transformer architectures.




\end{document}